\theoremstyle{thmstyleone}%
\newtheorem{theorem}{Theorem}%  meant for continuous numbers
\theoremstyle{thmstyletwo}%
\newtheorem{example}{Example}%
\newtheorem{remark}{Remark}%
\theoremstyle{thmstylethree}%
\newtheorem{definition}{Definition}%
\newtheorem{corollary}[theorem]{Corollary}
\newtheorem{lemma}[theorem]{Lemma}
\theoremstyle{definition}
\numberwithin{equation}{section}
\begin{document}

\title[Regularized split variational inclusions]{New iterative algorithms for solving split variational inclusions}

%%=============================================================%%
%% Prefix	-> \pfx{Dr}
%% GivenName	-> \fnm{Joergen W.}
%% Particle	-> \spfx{van der} -> surname prefix
%% FamilyName	-> \sur{Ploeg}
%% Suffix	-> \sfx{IV}
%% NatureName	-> \tanm{Poet Laureate} -> Title after name
%% Degrees	-> \dgr{MSc, PhD}
%% \author*[1,2]{\pfx{Dr} \fnm{Joergen W.} \spfx{van der} \sur{Ploeg} \sfx{IV} \tanm{Poet Laureate} 
%%                 \dgr{MSc, PhD}}\email{iauthor@gmail.com}
%%=============================================================%%

\author*[1]{\fnm{Soumitra} \sur{Dey}}\email{deysoumitra2012@gmail.com}
\author[2]{\fnm{Chinedu} \sur{Izuchukwu}}\email{chinedu.izuchukwu@wits.ac.za}
\author[1]{\fnm{Adeolu} \sur{Taiwo}}\email{taiwo.adeolu@yahoo.com}
\author[1]{\fnm{Simeon} \sur{Reich}}\email{sreich@technion.ac.il}
%\equalcont{}
\affil*[1]{\orgdiv{Department of Mathematics}, \orgname{The Technion- Israel Institute of Technology}, \orgaddress{\city{Haifa}, \postcode{3200003}, \country{Israel}}}
\affil[2]{\orgdiv{School of Mathematics}, \orgname{University of the Witwatersrand}, \orgaddress{\city{Private Bag 3, Johannesburg}, \postcode{2050}, \country{South Africa}}}

%\affil[]{\orgdiv{} \orgname{} \orgaddress{\street{} \city{}, \postcode{}, \state{}, \country{}}}

%%==================================%%
%% sample for unstructured abstract %%
%%==================================%%

\abstract{In this paper we study a class of split variational inclusion (SVI) and regularized split variational inclusion (RSVI) problems in real Hilbert spaces.
We discuss various analytical properties of the net generated by the RSVI and establish the existence and uniqueness of the solution to the RSVI.
Using analytical properties of this net and under certain assumptions on the parameters and mappings associated with the SVI, we  establish the strong convergence of the sequence generated by our proposed iterative algorithm.
We also deduce another iterative algorithm by taking the regularization parameters to be zero in our proposed algorithm. We establish the weak convergence of the sequence generated by our new algorithm under certain assumptions. Moreover, we discuss two special cases of the SVI, namely the split convex minimization and the split variational inequality problems, and give several numerical examples.}

\keywords{Variational inclusion, Split variational inclusion, Iterative method, Regularization, Monotonicity}

%%\pacs[JEL Classification]{D8, H51}

\pacs[2020 MSC Classification]{65Y05, 65K15, 47H05, 49J53, 47H10}

\maketitle

%================================================================================================================
\section{Introduction}
\label{Sec:1}

Let $\mathscr{H}$ be a real Hilbert space equipped with the inner product and the norm $\left\langle\cdot, \cdot\right\rangle$ and $\|\cdot\|$, respectively.
Let $\mathscr{A}:\mathscr{H}\rightarrow 2^{\mathscr{H}}$ be a set-valued mapping and $f:\mathscr{H}\rightarrow\mathscr{H}$ be a single-valued mapping.
Given these data, the {\em variational inclusion} (VI) problem is defined as follows:
find a point $x^*\in\mathscr{H}$ such that
\begin{align}\label{VInP}
0\in\mathscr{A}(x^*)+f(x^*).
\end{align}

Problem $(\ref{VInP})$ is quite ubiquitous. It provides a general and convenient framework for a unified study of the optimal solutions of many optimization problems arising in mathematical programming and optimal control. It also generalizes many important problems such as minimization problems (MP) and variational inequality problems (VIP).
In view of its versatility, problem (\ref{VInP}) has been well studied by several authors, who have presented many iterative algorithms for solving it numerically (see, for instance, \cite{SDEY2022, Chi, RT} references therein).

It is well known that when $\mathscr{A}$ is maximal monotone, then the VI (\ref{VInP}) reduces to the problem of finding a point $x^*\in H$ such that
\begin{align}\label{VInP:Fix}
x^*=J_{\lambda}^{\mathscr{A}}(I-\lambda f)x^*,
\end{align}
where $J_{\lambda}^\mathscr{A}=(I+\lambda\mathscr{A})^{-1}$ is the resolvent operator of the mapping $\mathscr{A}$ with parameter $\lambda>0$.

If $f=0$, then the above problem $(\ref{VInP})$ reduces to the inclusion problem studied by Rockafellar \cite{RTRO1976}. We have already mentioned that the VIP is a special case of problem (\ref{VInP}). Indeed, in addition, if $A(\cdot)=\partial i_C(\cdot)$, where $i_C(\cdot)$ is the indicator function of a nonempty, closed and convex subset $C$ of $\mathscr{H}$, and $\partial i_C$ is the subdifferential of $i_C(\cdot)$ (discussed later in detail), then problem $(\ref{VInP})$ reduces to the classical {\em variational inequality problem} (VIP) which has recently been studied, for example, by Thong et al. \cite{DVTH2022}, that is, $(\ref{VInP})$ reduces to the problem of finding a point $x^*\in C$ such that
\begin{align}\label{VIP}
\left\langle f(x^*), y-x^*\right\rangle\geq 0 \; \; \quad\forall  y\in C.
\end{align}

The variational inequality problem (VIP) is a useful tool in the optimization community. It was introduced and studied by Stampacchia \cite{GSTA1964} (in Euclidean spaces). Thereafter, many researchers have paid attention to it and have introduced several iterative algorithms for solving the VIP numerically (see, for instance,  \cite{YCEN2011, YCEN22011, XJCA2014, QLDO2019, QLDO2018, FFAC2003}).

At this point we recall that Censor et al. \cite[Section 2]{YCEN22012} introduced the general split inverse problem (SIP) in which there are given two vector spaces,
$X$ and $Y$, and a bounded linear operator $A:X\rightarrow Y$. In addition, two inverse problems are involved. The first one, denoted by IP$_{1}$, is
formulated in the space $X$ and the second one, denoted by IP$_{2}$, is formulated in the space $Y$. Given these data, the {\em Split Inverse Problem}
(SIP) is formulated as follows:%
\begin{gather}
\text{Find a point }x^{\ast }\in X\text{ that solves IP}_{1} \\
\text{such that}  \notag \\
\text{the point }y^{\ast }=Ax^{\ast }\in Y\text{ solves IP}_{2}.
\end{gather}%

The SIP is quite general because it enables one to obtain various split problems by making different choices of IP$_{1}$ and IP$_{2}$.
One important example is the split variational inclusion problem. Let $\mathscr{H}_1$ and $\mathscr{H}_2$ be two real Hilbert spaces. Let $f_1:\mathscr{H}_1\rightarrow\mathscr{H}_1$ and $f_2:\mathscr{H}_2\rightarrow\mathscr{H}_2$ be two single-valued mappings. Let $\mathscr{B}_1:\mathscr{H}_1\rightarrow 2^{\mathscr{H}_1}$ and $\mathscr{B}_2:\mathscr{H}_2\rightarrow 2^{\mathscr{H}_2}$ be two maximal monotone set-valued mappings and let $A:\mathscr{H}_1\rightarrow\mathscr{H}_2$ be a bounded linear operator. With these data, the {\em split variational inclusion} (SVI) problem is defined as follows: find a point $x^*\in\mathscr{H}_1$ such that
\begin{equation}\label{SVI}
\begin{cases}
 0\in\mathscr{B}_1(x^*)+f_1(x^*)\quad\text{and}\\
y^*=Ax^*\in \mathscr{H}_2\quad\text{solves}\quad 0\in\mathscr{B}_2(y^*)+f_2(y^*).
\end{cases}
\end{equation}

The SVI (\ref{SVI}) was first introduced by Moudafi \cite{AMOU2011}. Split variational inequality problems (SVIPs), split fixed point problems (SFPPs), as well as split feasibility problems (SFPs), can all be seen as special cases of the SVI problem. These can also be seen as well-known examples of the SIP
(see  \cite{YCEN22012,YCEN2006,YCEN2009,YCEN1994}).
In particular, if we take $f_1=f_2=0$, then the above problem (\ref{SVI}) reduces to a class of inclusion problems studied by Byrne et al. \cite{CBYR2012}. They introduced a $CQ$-type iterative algorithm (recall that the $CQ$ algorithm is a basic algorithm for solving the SFP) for solving such problems and established convergence theorems for it in infinite-dimensional Hilbert spaces. Such problems have recently been also studied by Hieu et al. \cite{DVHI2022}, who have proposed a proximal-like algorithm for solving it numerically.

Moudafi \cite{AMOU2011} studied the following iterative algorithm for solving problem (\ref{SVI}) in the case where $f_1$ and $f_2$ are inverse strongly monotone:
given  $z_1\in\mathscr{H}_1$ and $\gamma>0$, compute
\begin{equation}\label{algo:moudafi}
z_{n+1}=U(I-\gamma A^*(I-T)A)z_n \;  \quad \forall n\in\mathbb{N},
\end{equation}
where $\gamma\in (0, \frac{1}{L})$, $L$ is the spectral radius of $A^*A$, $A^*$ is the adjoint of $A$, $U=J_{\lambda}^{\mathscr{B}_1}(I-\lambda f_1)$, and $T=J_{\lambda}^{\mathscr{B}_2}(I-\lambda f_2)$. He established the weak convergence of the sequences generated by the above algorithm under certain assumptions on the parameters involved.

However, generally speaking, in an infinite-dimensional Hilbert space setting, the strong convergence of iterative algorithms is more desirable than weak convergence.
Therefore, it is natural to ask the following question: Is it possible to devise a simpler iterative algorithm than the one studied in \eqref{algo:moudafi} which will generate
strongly convergent approximating sequences? In this paper we give an affirmative answer to this question.

Let $\Omega$ be the solution set of the SVI (\ref{SVI}). In order to solve the SVI (\ref{SVI}) and to select a particular element in  $\Omega$, we focus our attention on solving the following variational inequality: find a point $u\in\Omega$ such that
\begin{align}\label{SVIVI}
\left\langle \mathscr{F}u, x^*-u\right\rangle\geq 0 \;  \quad \forall x^*\in\Omega,
\end{align}
where $\mathscr{F}:\mathscr{H}_1\rightarrow\mathscr{H}_1$ is a single-valued mapping.

Problem $(\ref{SVIVI})$ can be viewed as a bilevel variational inequality problem.

In this paper we study the SVI (\ref{SVI}) under the following assumptions:

\begin{enumerate}
\item[(A1)] $\mathscr{B}_1$ and $\mathscr{B}_2$ are maximal monotone

\item[(A2)] $f_1$ and $f_2$ are $\tau_1$ and $\tau_2$-inverse strongly monotone (ism), respectively.

\item[(A3)] $\mathscr{F}$ is $\gamma$-strongly monotone and $L$-Lipschitz continuous.

\item[(A4)] The solution set $\Omega$ of the problem SVI (\ref{SVI}) is nonempty.
\end{enumerate}

Note that $\Omega$ is a subset of the Hilbert space $\mathscr{H}_1$. If $x^*\in\Omega$, then $y^* = Ax^*$ satisfies the second inclusion of (\ref{SVI}).

Our algorithm is based on the regularization technique, which provides the strong convergence of the approximating sequences generated by the proposed algorithm due to the special properties of the single-valued mapping $\mathscr{F}$.

This paper is organized as follows. In Section \ref{Sec:2} we collect some basic definitions and results. In Section \ref{Sec:3} we establish several analytical properties of the net generated by the RSVI (\ref{RSVI}). In Section \ref{Sec:4} we introduce an algorithm for solving the SVI \eqref{SVI} and prove its strong convergence under certain assumptions on the parameters involved. In Section \ref{Sec:5} we have discussed two special cases, namely the split convex minimization problem (SCMP) and the split variational inequality problem (SVIP) as applications of the SVI \eqref{SVI}. In Section \ref{Sec:6} we have illustrated the efficiency of our algorithms numerically.

%=======================================================================================================
\section{Preliminaries}\label{Sec:2}
\noindent
In this section we collect some basic definitions and results, which we use in order to prove our main results.

\begin{definition}\cite{HHBA2011}
A single-valued mapping $f:\mathscr{H}\rightarrow\mathscr{H}$ is said to be:
\begin{enumerate}
\item $\gamma$-strongly monotone if there exists $\gamma>0$ such that
\begin{align*}
\left\langle  f(x)-f(y), x-y\right\rangle\geq\gamma\|x-y\|^2 \; \quad\forall x, y\in\mathscr{H}.
\end{align*}

\item $\eta$-inverse strongly monotone ($\eta$-ism, for short) if there exists $\eta>0$ such that
\begin{align*}
\left\langle  f(x)-f(y), x-y\right\rangle\geq\eta\|f(x)-f(y)\|^2 \; \quad\forall x, y\in\mathscr{H}.
\end{align*}

\item firmly nonexpansive if
\begin{align*}
\|(I-f)(x)-(I-f)(y)\|^2+\|f(x)-f(y)\|^2\leq \|x-y\|^2 \; \quad\forall x, y\in\mathscr{H},
\end{align*}
or equivalently,
\begin{align*}
\left\langle  f(x)-f(y), x-y\right\rangle\geq\|f(x)-f(y)\|^2 \; \quad\forall x, y\in\mathscr{H},
\end{align*}
or equivalently, $f$ is $1$-ism.

\item $\alpha$-averaged if there exists $\alpha\in(0, 1)$ and a nonexpansive mapping $S:\mathscr{H}\rightarrow\mathscr{H}$ such that
\begin{align*}
f=(1-\alpha)I+\alpha S.
\end{align*}
\end{enumerate}
\end{definition}

\begin{definition}\cite{AJKU2005}
A function $f:\mathscr{H}\rightarrow\mathbb{R}\cup\{+\infty\}$ is lower semi-continuous at a point $x^*\in\mathscr{H}$ if for any sequence $\{x_n\}\subset \mathscr{H}$ with $x_n\rightarrow x^*$, we have
\begin{align*}
f(x^*)\leq\liminf_{n\rightarrow\infty} f(x_n).
\end{align*}
\end{definition}

If the function $f:\mathscr{H}\rightarrow\mathbb{R}\cup\{+\infty\}$ is lower semi-continuous at each $x^*\in\mathscr{H}$, then it is called lower semi-continuous.

\begin{definition}\cite{UMOS1967}
Let $C$ be a nonempty convex subset of $\mathscr{H}$. A single-valued mapping $f:C\rightarrow\mathscr{H}$ is said to be hemicontinuous if for any $x,y\in C$ and $z\in\mathscr{H}$, the function $t\rightarrow\left\langle f(x+t(y-x)),z\right\rangle$ from $[0,1]$ into $\mathbb{R}$ is continuous. 
%Thant is,
%\begin{align*}
%\lim_{t\rightarrow t_0\in [0,1]} \left\langle f(x+t(y-x)), z\right\rangle= \left\langle f(x+t_0(y-x)), z\right\rangle.
%\end{align*}
\end{definition}

\begin{remark}
Every Lipschitz continuous mapping is hemicontinuous.
\end{remark}

The graph of a set-valued mapping $\mathscr{A}:\mathscr{H}\rightarrow 2^\mathscr{H}$ is denoted by $gra(\mathscr{A})$ and is defined by $gra(\mathscr{A}):=\left\lbrace (x, u)\in\mathscr{H}\times\mathscr{H}: u\in\mathscr{A}(x)\right\rbrace.$

\begin{definition}\cite{HHBA2011}
Let $\mathscr{A}: \mathscr{H}\rightarrow 2^\mathscr{H}$ be a set-valued mapping. Then $\mathscr{A}$ is said to be:
\begin{enumerate}
\item monotone if for every $x,y\in\mathscr{H}$, we have
\begin{align*}
\left\langle u-v, x-y\right\rangle\geq 0 \; \quad\forall u\in\mathscr{A}(x), v\in\mathscr{A}(y).
\end{align*}
\item maximal monotone if it is monotone and there does not exist any other monotone mapping $\mathscr{B}:\mathscr{H}\rightarrow 2^\mathscr{H}$ such that $gra(\mathscr{A})\subsetneq gra(\mathscr{B})$.
\end{enumerate}
\end{definition}

\begin{lemma}\cite{HHBA2011}
If $\mathscr{A}:\mathscr{H}\rightarrow 2^\mathscr{H}$ is a maximal monotone set-valued mapping, then the resolvent $J_{\lambda}^\mathscr{A}=(I+\lambda\mathscr{A})^{-1}$, where $\lambda>0$ is a constant, of $\mathscr{A}$ is a single-valued and firmly nonexpansive mapping.
\end{lemma}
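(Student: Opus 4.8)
The plan is to work directly from the defining inclusion of the resolvent and to use the monotonicity inequality, invoking maximality only to guarantee that the resolvent is defined on all of $\mathscr{H}$. First I would unwind the definition: for a point $x$ in the domain of $J_\lambda^{\mathscr{A}}$, the statement $u = J_\lambda^{\mathscr{A}}(x)$ means $x \in (I+\lambda\mathscr{A})(u)$, which is equivalent to $\lambda^{-1}(x-u) \in \mathscr{A}(u)$.

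For the firm nonexpansiveness, I would pick any $x, y$ in the domain together with corresponding values $u = J_\lambda^{\mathscr{A}}(x)$ and $v = J_\lambda^{\mathscr{A}}(y)$, so that $\lambda^{-1}(x-u) \in \mathscr{A}(u)$ and $\lambda^{-1}(y-v) \in \mathscr{A}(v)$. Applying the monotonicity of $\mathscr{A}$ to these two points of its graph yields
$$\left\langle \lambda^{-1}(x-u) - \lambda^{-1}(y-v), \, u - v \right\rangle \geq 0.$$
Multiplying by $\lambda > 0$ and rearranging the inner product, this becomes $\left\langle x - y, \, u - v\right\rangle \geq \|u - v\|^2$, which is precisely the $1$-ism (equivalently, firmly nonexpansive) characterization recorded in the Definition, with $f = J_\lambda^{\mathscr{A}}$.

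Single-valuedness then falls out of the same computation by specializing to $y = x$: if $u$ and $u'$ are two values of the resolvent at the same point $x$, the displayed inequality gives $0 = \left\langle x - x, \, u - u'\right\rangle \geq \|u - u'\|^2$, forcing $u = u'$. I would emphasize that both single-valuedness and firm nonexpansiveness use only the monotonicity of $\mathscr{A}$, not its maximality.

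The remaining point is that $J_\lambda^{\mathscr{A}}$ is in fact defined on the whole space, i.e. that $I + \lambda\mathscr{A}$ is surjective; this is where maximal monotonicity is essential, and it is the main obstacle. The hard part is precisely this surjectivity, which is the content of Minty's theorem: for a maximal monotone $\mathscr{A}$, the range of $I + \lambda\mathscr{A}$ equals all of $\mathscr{H}$ for every $\lambda > 0$. Rather than reprove it, I would invoke Minty's theorem as recorded in the cited reference \cite{HHBA2011}, after which the single-valued map $J_\lambda^{\mathscr{A}} : \mathscr{H} \to \mathscr{H}$ is well defined on the entire space and, by the computation above, firmly nonexpansive.
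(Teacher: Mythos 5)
Your proof is correct. The paper itself does not prove this lemma---it is quoted from the cited reference \cite{HHBA2011}---so there is no in-paper argument to compare against; what you give is the standard textbook proof: monotonicity of $\mathscr{A}$ alone yields both single-valuedness and the $1$-ism inequality $\left\langle x-y,\, J_{\lambda}^{\mathscr{A}}x-J_{\lambda}^{\mathscr{A}}y\right\rangle \geq \|J_{\lambda}^{\mathscr{A}}x-J_{\lambda}^{\mathscr{A}}y\|^2$, which is exactly the firm-nonexpansiveness characterization in the paper's Definition, while maximality is needed only to get full domain via Minty's surjectivity theorem. Correctly isolating that last point---and invoking Minty rather than reproving it---is the appropriate level of detail for a result of this kind.
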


\begin{lemma}\label{Lem:summax}\cite{BLEM1997}
Let $\mathscr{A}:\mathscr{H}\rightarrow 2^\mathscr{H}$ be a maximal monotone set-valued mapping and let $f:\mathscr{H}\rightarrow\mathscr{H}$ be a monotone and Lipschitz continuous single-valued mapping. Then $\mathscr{B}=\mathscr{A}+f$ is also maximal monotone.
\end{lemma}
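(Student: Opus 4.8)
The plan is to split the proof into the (trivial) monotonicity part and the (substantive) maximality part, handling the latter through Minty's surjectivity characterization of maximal monotonicity together with a Banach contraction argument built on the resolvent.

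First I would check that $\mathscr{B}=\mathscr{A}+f$ is monotone. Given $x,y\in\mathscr{H}$ and elements $u+f(x)\in\mathscr{B}(x)$, $v+f(y)\in\mathscr{B}(y)$ with $u\in\mathscr{A}(x)$, $v\in\mathscr{A}(y)$, I would simply add the monotonicity inequality $\langle u-v,\,x-y\rangle\ge 0$ for $\mathscr{A}$ to the monotonicity inequality $\langle f(x)-f(y),\,x-y\rangle\ge 0$ for $f$, obtaining $\langle (u+f(x))-(v+f(y)),\,x-y\rangle\ge 0$. This is immediate and requires nothing beyond the definitions in Section~\ref{Sec:2}.

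The heart of the argument is maximality, for which I would use the classical characterization (Minty's theorem): a monotone operator $\mathscr{C}$ is maximal monotone if and only if $R(I+\mathscr{C})=\mathscr{H}$. I would apply this to $\mathscr{C}=\lambda\mathscr{B}$ for a conveniently small $\lambda>0$. Fix an arbitrary $y\in\mathscr{H}$; solving $y\in(I+\lambda\mathscr{B})(x)=x+\lambda\mathscr{A}(x)+\lambda f(x)$ is equivalent to $y-\lambda f(x)\in(I+\lambda\mathscr{A})(x)$, that is, to the fixed point equation
\begin{equation*}
x=J_{\lambda}^{\mathscr{A}}\bigl(y-\lambda f(x)\bigr).
\end{equation*}
Define $\Phi(x):=J_{\lambda}^{\mathscr{A}}(y-\lambda f(x))$. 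Since the resolvent $J_{\lambda}^{\mathscr{A}}$ is firmly nonexpansive (hence nonexpansive) by the preceding lemma, and $f$ is $L$-Lipschitz, for any $x_1,x_2\in\mathscr{H}$ I obtain
\begin{equation*}
\|\Phi(x_1)-\Phi(x_2)\|\le\|(y-\lambda f(x_1))-(y-\lambda f(x_2))\|=\lambda\|f(x_1)-f(x_2)\|\le\lambda L\,\|x_1-x_2\|.
\end{equation*}
Choosing $\lambda\in(0,1/L)$ makes $\lambda L<1$, so $\Phi$ is a strict contraction and the Banach fixed point theorem yields a (unique) $x^{\ast}$ with $\Phi(x^{\ast})=x^{\ast}$, i.e. $y\in(I+\lambda\mathscr{B})(x^{\ast})$.

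Since $y\in\mathscr{H}$ was arbitrary, this proves $R(I+\lambda\mathscr{B})=\mathscr{H}$, so by Minty's theorem $\lambda\mathscr{B}$ is maximal monotone, and therefore $\mathscr{B}=\tfrac{1}{\lambda}(\lambda\mathscr{B})$ is maximal monotone as well, positive scaling preserving maximal monotonicity. The step I expect to be the main obstacle is precisely the reduction to surjectivity: one must invoke Minty's characterization (the essential external ingredient not reproved here) and be careful to restrict to $\lambda<1/L$ so that the composition $\Phi$ is genuinely a contraction, after which the passage from ``surjective for one admissible $\lambda$'' to full maximal monotonicity is routine. Everything else—monotonicity of the sum and the Lipschitz estimate for $\Phi$—is elementary.
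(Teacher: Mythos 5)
The paper offers no proof of this lemma at all: it is quoted verbatim from Lemaire \cite{BLEM1997} as a known result, so there is no internal argument to compare yours against. Your proof is correct and is the standard self-contained justification of the cited fact. The monotonicity of the sum is indeed immediate, and your maximality argument is sound: by Minty's theorem it suffices to show $R(I+\lambda\mathscr{B})=\mathscr{H}$ for one $\lambda>0$; the equivalence of $y\in(I+\lambda\mathscr{B})(x)$ with the fixed point equation $x=J_{\lambda}^{\mathscr{A}}(y-\lambda f(x))$ is exact because $J_{\lambda}^{\mathscr{A}}$ is the single-valued inverse of $I+\lambda\mathscr{A}$; nonexpansiveness of the resolvent plus the $L$-Lipschitz bound gives the contraction constant $\lambda L<1$ for $\lambda\in(0,1/L)$; and positive scaling does preserve maximal monotonicity (if $S$ is monotone with $gra(\lambda\mathscr{B})\subsetneq gra(S)$, then $\tfrac{1}{\lambda}S$ is monotone and properly extends $\mathscr{B}$). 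Two small points worth making explicit: first, the fact that $\Phi$ is defined on \emph{all} of $\mathscr{H}$ rests on the resolvent of $\mathscr{A}$ having full domain, which is itself Minty's theorem applied to $\mathscr{A}$ — so Minty enters twice, once as hypothesis-exploitation and once as the target criterion; second, the scaling detour can be avoided by citing Minty in the form ``maximal monotone iff $R(I+\lambda T)=\mathscr{H}$ for some (equivalently all) $\lambda>0$.'' Your approach also makes clear exactly where Lipschitz continuity is used: for merely hemicontinuous monotone $f$ (under which the conclusion still holds) the contraction argument breaks down and a genuinely harder existence argument is needed, so your proof is the natural one under the stated hypotheses.
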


\begin{lemma}\cite{HHBA2011}
For any $x, y\in\mathscr{H}$ and $\lambda\in\mathbb{R}$, the following identity holds:
\begin{align*}
\|(1-\lambda)x+\lambda y\|^2=(1-\lambda)\|x\|^2+\lambda\|y\|^2-\lambda(1-\lambda)\|x-y\|^2.
\end{align*}
\end{lemma}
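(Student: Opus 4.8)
The plan is to prove this elementary identity by a direct expansion using the bilinearity and symmetry of the inner product together with the defining relation $\|z\|^2=\langle z,z\rangle$; no deeper Hilbert space structure is needed, and the only mild care required is the bookkeeping of the coefficients in $\lambda$.

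First I would expand the left-hand side. Writing $\|(1-\lambda)x+\lambda y\|^2=\langle (1-\lambda)x+\lambda y,\,(1-\lambda)x+\lambda y\rangle$ and using bilinearity and symmetry, I obtain
\begin{align*}
\|(1-\lambda)x+\lambda y\|^2=(1-\lambda)^2\|x\|^2+2\lambda(1-\lambda)\langle x,y\rangle+\lambda^2\|y\|^2.
\end{align*}
Next I would expand the cross term on the right-hand side, namely $\|x-y\|^2=\|x\|^2-2\langle x,y\rangle+\|y\|^2$, and substitute it into $(1-\lambda)\|x\|^2+\lambda\|y\|^2-\lambda(1-\lambda)\|x-y\|^2$. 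Collecting like terms, the coefficient of $\|x\|^2$ becomes $(1-\lambda)-\lambda(1-\lambda)=(1-\lambda)^2$, the coefficient of $\|y\|^2$ becomes $\lambda-\lambda(1-\lambda)=\lambda^2$, and the coefficient of $\langle x,y\rangle$ becomes $2\lambda(1-\lambda)$. These coincide exactly with the coefficients found for the left-hand side, which proves the identity.

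Since the argument reduces to matching polynomial coefficients in $\lambda$ after expanding two squared norms, there is no genuine obstacle here; I would only emphasize that the identity holds for \emph{every} real $\lambda$, as no convexity restriction such as $\lambda\in[0,1]$ is invoked at any stage. An equivalent one-line route would be to write $(1-\lambda)x+\lambda y=x-\lambda(x-y)$ and expand directly, but the symmetric coefficient-matching above is the cleanest to present.
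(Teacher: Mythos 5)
Your proof is correct: the paper states this lemma without proof, simply citing Bauschke--Combettes, and your direct expansion via bilinearity of the inner product followed by coefficient matching is exactly the standard argument given in that reference. Your remark that the identity holds for all real $\lambda$, with no restriction to $[0,1]$, is also accurate and worth keeping.
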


\begin{lemma}(Young's Inequality)\label{Lem:Young}
For any $x,y\in\mathscr{H}$ and any $\epsilon>0$, we have
\begin{align*}
\left\langle x,y\right\rangle\leq\frac{\epsilon\|x\|^2}{2}+\frac{\|y\|^2}{2\epsilon}.
\end{align*}
\end{lemma}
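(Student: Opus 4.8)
The plan is to derive the inequality from the single elementary fact that the squared norm of any vector in $\mathscr{H}$ is nonnegative, applied to a suitably rescaled difference of $x$ and $y$. The only genuine idea is to choose the two scaling factors so that, upon expanding the squared norm, the cross term reproduces exactly $\left\langle x,y\right\rangle$ while the two diagonal terms carry precisely the coefficients $\epsilon/2$ and $1/(2\epsilon)$ that appear on the right-hand side.

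First I would consider the vector $\sqrt{\epsilon}\,x-\tfrac{1}{\sqrt{\epsilon}}\,y$, which is well defined since $\epsilon>0$. Because its squared norm is nonnegative, expanding via the bilinearity of the inner product and the symmetry $\left\langle x,y\right\rangle=\left\langle y,x\right\rangle$ gives
\[
0\le\left\|\sqrt{\epsilon}\,x-\tfrac{1}{\sqrt{\epsilon}}\,y\right\|^2=\epsilon\|x\|^2-2\left\langle x,y\right\rangle+\tfrac{1}{\epsilon}\|y\|^2,
\]
where the middle term uses $\sqrt{\epsilon}\cdot\tfrac{1}{\sqrt{\epsilon}}=1$ to collapse the scalar factors to $1$.

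Next I would simply rearrange this inequality to isolate the inner product, obtaining $2\left\langle x,y\right\rangle\le\epsilon\|x\|^2+\tfrac{1}{\epsilon}\|y\|^2$, and then divide by $2$ to reach the asserted bound $\left\langle x,y\right\rangle\le\tfrac{\epsilon\|x\|^2}{2}+\tfrac{\|y\|^2}{2\epsilon}$. As for the main obstacle, there really is none worth naming: this is a routine completing-the-square computation, and the only point requiring care is the bookkeeping of the scaling constants $\sqrt{\epsilon}$ and $1/\sqrt{\epsilon}$, which is precisely what makes the coefficients come out correctly.
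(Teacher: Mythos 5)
Your proof is correct. The paper states this lemma as a preliminary fact without giving any proof, so there is nothing to compare against; your argument---expanding $0\le\bigl\|\sqrt{\epsilon}\,x-\tfrac{1}{\sqrt{\epsilon}}\,y\bigr\|^2$ and rearranging---is exactly the standard derivation one would supply, and every step (bilinearity, the cancellation $\sqrt{\epsilon}\cdot\tfrac{1}{\sqrt{\epsilon}}=1$, division by $2$) is valid.
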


\begin{lemma}\label{Lem:ism:firmly}\cite[Lemma 3.2]{WTAK2015}
\begin{enumerate}
\item The composition of finitely many averaged mappings is also averaged..

\item If $T:\mathscr{H}\rightarrow\mathscr{H}$ is $\beta$-ism, then for $\gamma>0$, $\gamma T$ is $(\beta/\gamma)$-ism.

\item $T:\mathscr{H}\rightarrow\mathscr{H}$ is averaged if and only if its complement $I-T$ is $\beta$-ism for some $\beta > 1/2$. Indeed, for $\alpha\in (0, 1)$, $T$ is $\alpha$-averaged if and only if $I-T$ is $(1/2\alpha)$-ism.

%\item If $T=(1-\beta)R+\beta S$ for some $\beta\in(0, 1)$ and $R:\mathscr{H}\rightarrow\mathscr{H}$ be $\alpha-$averaged for some $\alpha\in (0, 1)$ and $S:H\rightarrow H$ be a nonexpansive mapping then $T$ is $\beta+(1-\beta)\alpha$- averaged.
\end{enumerate}
\end{lemma}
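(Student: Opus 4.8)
The plan is to treat the three assertions in increasing order of difficulty, since a single auxiliary identity does most of the work for parts (2) and (3).

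First I would dispose of part (2) by a one-line computation straight from the definition of ism. If $T$ is $\beta$-ism, then for all $x,y\in\mathscr{H}$,
\[
\langle \gamma Tx-\gamma Ty,\, x-y\rangle=\gamma\langle Tx-Ty,\, x-y\rangle\ge\gamma\beta\|Tx-Ty\|^2=\frac{\beta}{\gamma}\,\|\gamma Tx-\gamma Ty\|^2,
\]
which is precisely the $(\beta/\gamma)$-ism inequality for $\gamma T$.

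The engine for part (3) is the elementary characterization that a mapping $S$ is nonexpansive if and only if $I-S$ is $\frac12$-ism. I would prove this by putting $a=x-y$ and $b=Sx-Sy$ and expanding the two sides of the ism inequality to obtain
\[
\langle (I-S)x-(I-S)y,\, x-y\rangle-\frac12\|(I-S)x-(I-S)y\|^2=\frac12\big(\|a\|^2-\|b\|^2\big),
\]
so the $\frac12$-ism inequality holds for all $x,y$ exactly when $\|b\|\le\|a\|$, i.e. when $S$ is nonexpansive. Given this, part (3) follows by combining it with the scaling from part (2): writing $T=(1-\alpha)I+\alpha S$ gives $I-T=\alpha(I-S)$, and since $I-S$ is $\frac12$-ism, part (2) with scaling factor $\alpha$ yields that $I-T$ is $\frac{1}{2\alpha}$-ism. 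Conversely, if $I-T$ is $\frac{1}{2\alpha}$-ism, I would set $S=\frac1\alpha T-\frac{1-\alpha}{\alpha}I$, observe that $I-S=\frac1\alpha(I-T)$ is $\frac12$-ism by part (2), hence $S$ is nonexpansive and $T=(1-\alpha)I+\alpha S$ is $\alpha$-averaged. The ``$\beta>1/2$'' formulation is then just the remark that $\alpha\in(0,1)$ corresponds to $\frac{1}{2\alpha}>\frac12$ via $\alpha=\frac{1}{2\beta}$.

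For part (1) it suffices to handle the composition of two averaged mappings and then induct. Writing $T_i=(1-\alpha_i)I+\alpha_i S_i$ with $S_i$ nonexpansive and $\alpha_i\in(0,1)$, I would expand $T_1T_2$ and collect the identity term, obtaining
\[
T_1T_2=(1-\alpha_1)(1-\alpha_2)I+(1-\alpha_1)\alpha_2 S_2+\alpha_1 S_1T_2.
\]
Setting $\alpha:=1-(1-\alpha_1)(1-\alpha_2)\in(0,1)$, the remaining terms equal $\alpha S$ with $S=\frac{(1-\alpha_1)\alpha_2}{\alpha}S_2+\frac{\alpha_1}{\alpha}S_1T_2$; the two coefficients sum to $1$, so $S$ is a convex combination of the nonexpansive maps $S_2$ and $S_1T_2$ and is therefore nonexpansive. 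Hence $T_1T_2$ is $\alpha$-averaged, and a finite induction closes part (1). The main obstacle here is purely the bookkeeping: guessing the correct averaging constant $\alpha=1-(1-\alpha_1)(1-\alpha_2)$ and checking that the leftover coefficients form a convex combination; everything else reduces to the definitions or to the scaling already established in part (2).
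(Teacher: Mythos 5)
Your proof is correct. Note, however, that the paper does not prove this lemma at all: it is stated as a preliminary result cited from Takahashi--Xu--Yao \cite[Lemma 3.2]{WTAK2015}, so there is no in-paper argument to compare against. Your self-contained derivation is the standard one from that literature -- the polarization identity $\langle (I-S)x-(I-S)y,\,x-y\rangle-\tfrac12\|(I-S)x-(I-S)y\|^2=\tfrac12\bigl(\|x-y\|^2-\|Sx-Sy\|^2\bigr)$ linking nonexpansiveness of $S$ to $\tfrac12$-ism of $I-S$, the scaling rule, and the two-map composition formula $T_1T_2=(1-\alpha)I+\alpha S$ with $\alpha=1-(1-\alpha_1)(1-\alpha_2)$ -- and all the computations check out, including the implicit use of the fact that averaged mappings are nonexpansive (so that $S_1T_2$ is nonexpansive and $S$ is a convex combination of nonexpansive maps).
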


\begin{lemma}\label{Lem:ism}\cite{WTAK2015}
Let $\mathscr{H}_1$ and $\mathscr{H}_2$ be two Hilbert spaces, $A:\mathscr{H}_1\rightarrow \mathscr{H}_2$ be a bounded linear operator such that $A\neq 0$,
and let $T:\mathscr{H}_2\rightarrow \mathscr{H}_2$ be a nonexpansive mapping. Then $A^*(I-T)A$ is $\frac{1}{2\|A\|^2}$-ism, where $A^*$ is the adjoint of $A$.
\end{lemma}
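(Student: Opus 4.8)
The plan is to reduce the claim to the single elementary fact that the complement $I-T$ of a nonexpansive mapping is $\frac{1}{2}$-ism, and then to transport this monotonicity through $A$ by means of the adjoint relation $\langle A^*z, w\rangle = \langle z, Aw\rangle$ together with the norm identity $\|A^*\| = \|A\|$.

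First I would record that, because $T$ is nonexpansive, the mapping $S := I - T$ is $\frac{1}{2}$-ism. This can either be quoted from Lemma~\ref{Lem:ism:firmly}(3) (the nonexpansive case corresponds to the averaging parameter $\alpha = 1$, giving $1/(2\alpha) = 1/2$), or verified directly: writing $u = x-y$ and $v = Tx - Ty$, so that $\|v\| \le \|u\|$, one checks that $\langle (I-T)x - (I-T)y,\, x-y\rangle - \frac{1}{2}\|(I-T)x - (I-T)y\|^2 = \frac{1}{2}\bigl(\|u\|^2 - \|v\|^2\bigr) \ge 0$.

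Next, to establish the ism estimate for $A^*(I-T)A = A^*SA$, I would fix $x, y \in \mathscr{H}_1$ and apply the $\frac{1}{2}$-ism of $S$ at the points $Ax, Ay \in \mathscr{H}_2$. Moving $A^*$ across the inner product via the adjoint relation then gives
\begin{align*}
\langle A^*SAx - A^*SAy,\, x - y\rangle = \langle SAx - SAy,\, Ax - Ay\rangle \ge \frac{1}{2}\|SAx - SAy\|^2.
\end{align*}
It remains only to replace $\|SAx - SAy\|^2$ by $\|A^*SAx - A^*SAy\|^2$; this is supplied by the bound $\|A^*z\| \le \|A^*\|\,\|z\| = \|A\|\,\|z\|$ applied to $z = SAx - SAy$, which yields $\|SAx - SAy\|^2 \ge \frac{1}{\|A\|^2}\|A^*SAx - A^*SAy\|^2$. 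Combining the two estimates produces the desired inequality $\langle A^*SAx - A^*SAy,\, x-y\rangle \ge \frac{1}{2\|A\|^2}\|A^*SAx - A^*SAy\|^2$.

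I do not expect a genuine obstacle here, since the argument is a short chain of three standard facts; the only point requiring a moment's care is the hypothesis $A \ne 0$, which guarantees $\|A\| > 0$, so that the constant $\frac{1}{2\|A\|^2}$ is well defined and positive.
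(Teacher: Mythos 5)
Your proof is correct. Note that the paper itself gives no proof of this lemma---it is simply quoted from \cite{WTAK2015}---and your argument is essentially the standard one from that reference: establish that $I-T$ is $\tfrac{1}{2}$-ism, pass the inner product through $A$ via the adjoint identity, and then trade $\|SAx-SAy\|^2$ for $\tfrac{1}{\|A\|^2}\|A^*SAx-A^*SAy\|^2$ using $\|A^*z\|\le\|A\|\,\|z\|$. One small caution: your parenthetical appeal to Lemma~\ref{Lem:ism:firmly}(3) with ``averaging parameter $\alpha=1$'' is not literally licensed by that lemma, which is stated only for $\alpha\in(0,1)$; indeed a nonexpansive mapping need not be averaged at all (consider $T=-I$, for which $I-T=2I$ is only $\tfrac12$-ism, consistent with your claim but outside the lemma's hypotheses). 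Since you also give the direct computation $\langle Sx-Sy,\,x-y\rangle-\tfrac12\|Sx-Sy\|^2=\tfrac12\bigl(\|x-y\|^2-\|Tx-Ty\|^2\bigr)\ge 0$, which is correct, the citation issue is harmless; it would be cleaner to drop the reference to Lemma~\ref{Lem:ism:firmly}(3) and rely on the computation alone.
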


\begin{lemma}\label{Lem:VIP}\cite{IYAM2001}
Let $C$ be a nonempty, closed and convex subset of $\mathscr{H}$. If $\mathscr{F}: \mathscr{H} \rightarrow\mathscr{H}$ is an $L$-Lipschitz continuous and $\beta$-strongly monotone single-valued mapping over $C$, then the VIP
\begin{align}
\left\langle\mathscr{F}(x^*), y-x^*\right\rangle\geq 0 \; \quad\forall y\in C\nonumber
\end{align}
has a unique solution $x^*\in C$.
\end{lemma}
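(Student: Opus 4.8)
The plan is to recast the variational inequality as a fixed point problem for a suitable self-mapping of $C$ and then invoke the Banach contraction principle. First I would recall the standard variational characterization of the metric projection $P_C$ onto the nonempty, closed and convex set $C$: for $w\in\mathscr{H}$ and $z\in C$, one has $z=P_C(w)$ if and only if $\left\langle w-z, y-z\right\rangle\leq 0$ for all $y\in C$. Using this, I would show that, for any fixed $\mu>0$, a point $x^*\in C$ solves the VIP $\left\langle\mathscr{F}(x^*), y-x^*\right\rangle\geq 0$ for all $y\in C$ if and only if it is a fixed point of the mapping $T:C\rightarrow C$ defined by $T(x)=P_C(x-\mu\mathscr{F}(x))$. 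Indeed, substituting $w=x^*-\mu\mathscr{F}(x^*)$ into the projection characterization turns the inequality $\left\langle -\mu\mathscr{F}(x^*), y-x^*\right\rangle\leq 0$ precisely into the desired variational inequality.

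The second step is to choose $\mu$ so that $T$ becomes a strict contraction. Since $P_C$ is nonexpansive, it suffices to control the mapping $I-\mu\mathscr{F}$. For $x,y\in C$, expanding the norm and using the $\beta$-strong monotonicity together with the $L$-Lipschitz continuity of $\mathscr{F}$ gives
\begin{align*}
\|(I-\mu\mathscr{F})x-(I-\mu\mathscr{F})y\|^2 \leq \left(1-2\mu\beta+\mu^2 L^2\right)\|x-y\|^2.
\end{align*}
Hence, for every $\mu\in\left(0, 2\beta/L^2\right)$ the factor $1-2\mu\beta+\mu^2 L^2$ lies in $(0,1)$, so $I-\mu\mathscr{F}$ is a contraction on $C$, and therefore so is $T=P_C\circ(I-\mu\mathscr{F})$. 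Finally, since $C$ is a closed subset of the complete space $\mathscr{H}$, it is itself a complete metric space, so the Banach contraction principle yields a \emph{unique} fixed point $x^*\in C$ of $T$. By the equivalence established in the first step, this $x^*$ is the unique solution of the VIP.

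I expect the only real subtlety to be pinning down the step-size range $\mu\in\left(0, 2\beta/L^2\right)$ that forces the contraction constant strictly below $1$; everything else — the projection characterization, the nonexpansiveness of $P_C$, and the completeness of $C$ — is standard. It is worth noting that the $\beta$-strong monotonicity of $\mathscr{F}$ already guarantees uniqueness directly, independently of the fixed point argument: if $x_1,x_2\in C$ both solve the VIP, then testing the inequality at $x_1$ with $y=x_2$ and the inequality at $x_2$ with $y=x_1$ and adding yields $\left\langle\mathscr{F}(x_1)-\mathscr{F}(x_2), x_1-x_2\right\rangle\leq 0$, which combined with $\left\langle\mathscr{F}(x_1)-\mathscr{F}(x_2), x_1-x_2\right\rangle\geq\beta\|x_1-x_2\|^2$ forces $x_1=x_2$.
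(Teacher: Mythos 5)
Your proof is correct: the paper itself gives no proof of this lemma (it is quoted from Yamada's work), and your argument --- recasting the VIP as a fixed point problem for $T=P_C\circ(I-\mu\mathscr{F})$ and applying the Banach contraction principle with $\mu\in\left(0,2\beta/L^2\right)$ --- is precisely the classical argument underlying the cited result, with the direct strong-monotonicity uniqueness check as a valid bonus. The only cosmetic point is that the contraction factor $1-2\mu\beta+\mu^2L^2$ lies in $[0,1)$ rather than $(0,1)$ (it can vanish when $\beta=L$ and $\mu=1/L$), which does not affect the argument.
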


\begin{lemma}\label{Lem:Minty}\cite[Minty Lemma]{FEBR1966,ABEH1997}
Let $C$ be a nonempty, closed and convex subset of $\mathscr{H}$. Let $\mathscr{F}:C\rightarrow\mathscr{H}$ be a hemicontinuous and monotone single-valued mapping.
Then $u \in C$ is a solution of the variational inequality
\begin{align*}
\left\langle\mathscr{F}u, y-u\right\rangle\geq 0 \; \quad\forall y\in C
\end{align*}
if and only if it is a solution of the problem
\begin{align*}
\left\langle\mathscr{F}y, y-u\right\rangle\geq 0 \;  \quad\forall y\in C.
\end{align*}
\end{lemma}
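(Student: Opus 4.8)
The plan is to prove the two implications separately, noting that they rely on different hypotheses: the forward direction uses only monotonicity, whereas the converse is where hemicontinuity and the convexity of $C$ become essential.

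First I would treat the forward implication. Assuming $u\in C$ satisfies $\langle\mathscr{F}u, y-u\rangle\geq 0$ for all $y\in C$, I would invoke the monotonicity of $\mathscr{F}$, which gives $\langle\mathscr{F}y-\mathscr{F}u, y-u\rangle\geq 0$. Adding this to the assumed inequality immediately yields $\langle\mathscr{F}y, y-u\rangle\geq 0$ for all $y\in C$. This direction is purely algebraic and requires no continuity assumption at all.

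For the converse, I would assume $\langle\mathscr{F}y, y-u\rangle\geq 0$ for all $y\in C$ and fix an arbitrary $y\in C$. The idea is the classical Minty trick: test the hypothesis along the segment joining $u$ to $y$. For $t\in(0,1]$ I set $y_t=(1-t)u+ty$, which lies in $C$ by convexity. Applying the hypothesis at the point $y_t$ and using the identity $y_t-u=t(y-u)$, I would then divide by $t>0$ to obtain $\langle\mathscr{F}(y_t), y-u\rangle\geq 0$ for every $t\in(0,1]$.

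Finally, I would let $t\to 0^+$. Here the hemicontinuity hypothesis is exactly what is needed: by definition, the map $t\mapsto\langle\mathscr{F}(u+t(y-u)), y-u\rangle$ is continuous on $[0,1]$, so passing to the limit yields $\langle\mathscr{F}u, y-u\rangle\geq 0$. Since $y\in C$ was arbitrary, $u$ solves the original variational inequality, completing the proof. I expect this limiting step to be the main obstacle: the passage to the limit genuinely requires hemicontinuity rather than mere monotonicity, and it is the one place where the continuity structure of $\mathscr{F}$ is indispensable.
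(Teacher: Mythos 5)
Your proof is correct and is precisely the classical Minty argument: the forward implication uses monotonicity alone (adding the monotonicity inequality to the assumed one), and the converse uses the segment trick $y_t=(1-t)u+ty$, division by $t$, and the hemicontinuity of $t\mapsto\left\langle\mathscr{F}(u+t(y-u)), y-u\right\rangle$ to pass to the limit $t\to 0^+$. The paper itself gives no proof of this lemma --- it is quoted from the cited references of Browder and Behera--Panda --- and your reconstruction coincides with the standard proof found there, so it is complete and nothing is missing.
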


\begin{lemma}\cite{HHBA2011}\label{Xu:lem}
Let $K$ be a nonempty, closed and convex subset of a Hilbert space $\mathscr{H}$. Let $\{x_n\}$ be a sequence which enjoys the following two properties:
\begin{enumerate}
\item  $\lim\limits_{n\rightarrow \infty}\|x_n - x\|$ exists for each $x\in K$.
\item every subsequential weak limit point of $\{x_n\}$ lies in $K$.
\end{enumerate}
Then $\{x_n\}$ converges weakly to a point in $K$.	
\end{lemma}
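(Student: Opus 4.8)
The plan is to use the two hypotheses for different purposes: property (1) will force any two candidate weak limits of $\{x_n\}$ to coincide, while property (2) will guarantee that the resulting common limit actually lies in $K$. First I would extract boundedness from property (1): since $K$ is nonempty, fix some $x_0\in K$; then $\lim_n\|x_n-x_0\|$ exists and is finite, so $\{x_n\}$ is bounded. Because $\mathscr{H}$ is a Hilbert space and hence reflexive, every bounded sequence has a weakly convergent subsequence, so the set of subsequential weak limit points of $\{x_n\}$ is nonempty, and by property (2) each such point belongs to $K$.

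The heart of the argument is to show that $\{x_n\}$ has \emph{exactly one} subsequential weak limit point. Suppose $p$ and $q$ are two such points, say $x_{n_k}\rightharpoonup p$ and $x_{m_j}\rightharpoonup q$, where $p,q\in K$ by property (2). Expanding the squared norms gives, for every $n$,
\[
\|x_n-p\|^2-\|x_n-q\|^2=2\langle x_n, q-p\rangle+\|p\|^2-\|q\|^2.
\]
Since $p,q\in K$, property (1) makes both $\|x_n-p\|$ and $\|x_n-q\|$ convergent, so the left-hand side converges; hence the scalar sequence $\langle x_n,q-p\rangle$ converges as $n\to\infty$. Passing to the limit along the two subsequences and using weak convergence then yields $\langle p,q-p\rangle=\langle q,q-p\rangle$, i.e. $\|p-q\|^2=0$, so $p=q$.

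To finish, I would invoke the standard subsequence principle: a bounded sequence in a reflexive space all of whose weakly convergent subsequences share the same limit $p$ must itself converge weakly to $p$. Indeed, if not, some $z\in\mathscr{H}$ and $\varepsilon>0$ would give a subsequence with $|\langle x_{n_k}-p, z\rangle|\geq\varepsilon$; extracting from it a further weakly convergent sub-subsequence (possible by boundedness) would contradict the uniqueness just established. Therefore $x_n\rightharpoonup p$, and $p\in K$ by property (2), which completes the proof.

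The main obstacle is really a single linchpin rather than a genuine difficulty: one must observe that property (1) gives more than boundedness, namely that through the polarization-type expansion above it forces $\langle x_n,q-p\rangle$ to converge, and this is precisely what pins $p$ and $q$ together. Once that observation is made, the remaining ingredients — reflexivity for the existence of weak limits and the subsequence principle for the conclusion — are entirely routine.
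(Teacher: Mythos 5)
Your proof is correct and complete: the paper itself offers no proof of this lemma (it is quoted from Bauschke--Combettes \cite{HHBA2011}), and your argument is precisely the classical one behind that result --- boundedness from property (1), the polarization identity $\|x_n-p\|^2-\|x_n-q\|^2=2\langle x_n,q-p\rangle+\|p\|^2-\|q\|^2$ forcing uniqueness of the weak cluster point, and the standard subsequence principle to upgrade uniqueness to weak convergence of the whole sequence. Nothing is missing; in particular you correctly identified the one non-routine step, namely that property (1) applied at \emph{two} points of $K$ makes the scalar sequence $\langle x_n,q-p\rangle$ convergent, which is what pins the two candidate limits together.
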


\begin{lemma}\label{Lem:Lastconvergence}\cite{SSAE2012}
Let $\left\lbrace \Gamma_n\right\rbrace$ be a sequence of nonnegative real numbers, $\left\lbrace \alpha_n\right\rbrace$ be a sequence of real numbers in $(0, 1)$ satisfying the condition $\sum_{n=1}^{\infty}\alpha_n=+\infty$,
and let $\left\lbrace b_n\right\rbrace$ be a sequence of real numbers. Assume that
\begin{align*}
\Gamma_{n+1}\leq(1-\alpha_n)\Gamma_n+\alpha_n b_n \; \quad \forall n\geq 1.
\end{align*}
If $\limsup_{k\rightarrow\infty}b_{n_k}\leq 0$ for every subsequence $\left\lbrace \Gamma_{n_k}\right\rbrace$ of $\left\lbrace \Gamma_n\right\rbrace$ satisfying
\begin{align*}
\liminf_{k\rightarrow\infty} (\Gamma_{n_{k}+1}-\Gamma_{n_k})\geq 0,
\end{align*}
then $\lim_{n\rightarrow\infty} \Gamma_n=0.$
\end{lemma}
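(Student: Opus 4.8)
The plan is to prove $\lim_{n\to\infty}\Gamma_n=0$ directly by splitting into two complementary cases according to the eventual monotonicity of $\{\Gamma_n\}$. Since $\Gamma_n\geq 0$ for every $n$, in each case it suffices to exhibit a vanishing upper bound. The conceptual crux is that the hypothesis on $\{b_n\}$ may be invoked only along a subsequence $\{\Gamma_{n_k}\}$ whose consecutive differences do not decrease in the limit, i.e. with $\liminf_k(\Gamma_{n_k+1}-\Gamma_{n_k})\geq 0$; hence the real work is to manufacture such a subsequence that simultaneously controls $\Gamma_n$ itself.

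\textbf{Case 1: $\{\Gamma_n\}$ is eventually nonincreasing.} Being bounded below by $0$, it converges to some $L\geq 0$, so $\Gamma_{n+1}-\Gamma_n\to 0$. In particular, taking $n_k=k$, the full sequence satisfies $\liminf_k(\Gamma_{k+1}-\Gamma_k)=0\geq 0$, and the hypothesis yields $\limsup_n b_n\leq 0$. Rewriting the recursion as $\Gamma_{n+1}\leq\Gamma_n-\alpha_n(\Gamma_n-b_n)$ and assuming for contradiction that $L>0$, I would fix an index $n_0$ beyond which $\Gamma_n-b_n\geq L/2$; telescoping then gives $\Gamma_N\leq\Gamma_{n_0}-\tfrac{L}{2}\sum_{n=n_0}^{N-1}\alpha_n\to-\infty$ because $\sum\alpha_n=+\infty$, contradicting $\Gamma_n\geq 0$. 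Hence $L=0$.

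\textbf{Case 2 (the harder case): $\{\Gamma_n\}$ is not eventually nonincreasing}, so $\Gamma_{n+1}>\Gamma_n$ for infinitely many $n$. Here I would invoke Maing\'e's index construction: for large $n$ set $\tau(n)=\max\{k\leq n:\Gamma_k<\Gamma_{k+1}\}$, which is well defined, nondecreasing, tends to $+\infty$, and satisfies both $\Gamma_{\tau(n)}\leq\Gamma_{\tau(n)+1}$ and the key domination $\Gamma_n\leq\Gamma_{\tau(n)+1}$. Writing $m_n=\tau(n)$, the relation $\Gamma_{m_n}\leq\Gamma_{m_n+1}$ forces $\liminf_n(\Gamma_{m_n+1}-\Gamma_{m_n})\geq 0$, so the hypothesis applies along $\{m_n\}$ and gives $\limsup_n b_{m_n}\leq 0$. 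Substituting $\Gamma_{m_n}\leq\Gamma_{m_n+1}$ into the recursion at index $m_n$ and using $1-\alpha_{m_n}>0$ collapses it to $\Gamma_{m_n+1}\leq b_{m_n}$, whence $\limsup_n\Gamma_{m_n+1}\leq 0$ and therefore $\Gamma_{m_n+1}\to 0$. The domination $0\leq\Gamma_n\leq\Gamma_{\tau(n)+1}=\Gamma_{m_n+1}\to 0$ then forces $\Gamma_n\to 0$.

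I expect the main obstacle to lie entirely in Case 2, and specifically in verifying the three properties of the Maing\'e index $\tau(n)$ (monotonicity, divergence to $+\infty$, and the domination $\Gamma_n\leq\Gamma_{\tau(n)+1}$), since these are exactly what legitimately place the argument on a subsequence to which the condition $\limsup_k b_{n_k}\leq 0$ may be applied. Once that selection is secured, the reduction of the recursion to $\Gamma_{m_n+1}\leq b_{m_n}$ and the final squeeze are routine. A one-shot alternative that avoids the case split is to argue by contradiction from $\limsup_n\Gamma_n>0$, extracting a single subsequence that realizes this $\limsup$ and along which the difference condition holds; but the two-case route above is the most transparent and is what I would carry out.
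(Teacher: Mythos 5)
Your proof is correct: the paper itself gives no proof of this lemma (it is quoted from Saejung--Yotkaew \cite{SSAE2012}), and your argument --- the case split on eventual monotonicity, the telescoping contradiction in the monotone case, and Maing\'e's index $\tau(n)$ with the domination $\Gamma_n\leq\Gamma_{\tau(n)+1}$ in the non-monotone case --- is essentially the proof given in that cited reference. The only point to polish is that $\tau(n)$ is merely nondecreasing, so to invoke the hypothesis literally you should pass to the strictly increasing enumeration of its distinct values before concluding $\limsup_n b_{\tau(n)}\leq 0$; this is routine and does not affect the argument.
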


\begin{lemma}\label{Lem:LipIsm}\cite[Baillon-Haddad]{HHBA2010}
Let $f:\mathscr{H}\rightarrow\mathbb{R}$ be convex and Fr\'{e}chet differentiable on $\mathscr{H}$. If $\nabla f$ is $\beta$-Lipschitz continuous for some $\beta>0$,
then $\nabla f$ is $1/\beta$-ism.
\end{lemma}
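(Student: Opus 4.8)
The plan is to establish the co-coercivity estimate
\[
\langle \nabla f(x) - \nabla f(y), x - y\rangle \geq \tfrac{1}{\beta}\|\nabla f(x) - \nabla f(y)\|^2
\]
by reducing it to a statement about minimizing an auxiliary convex function. The central observation is that for each fixed $y\in\mathscr{H}$, the function $g(x) := f(x) - \langle \nabla f(y), x\rangle$ is convex and Fr\'echet differentiable with $\nabla g(x) = \nabla f(x) - \nabla f(y)$; hence $\nabla g$ is again $\beta$-Lipschitz continuous, and since $\nabla g(y) = 0$, the point $y$ is a global minimizer of $g$.

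First I would prove the quadratic upper bound (the \emph{descent lemma}): for all $u, v\in\mathscr{H}$,
\[
g(u) \leq g(v) + \langle \nabla g(v), u - v\rangle + \tfrac{\beta}{2}\|u - v\|^2 .
\]
This follows from the fundamental theorem of calculus, writing $g(u) - g(v) = \int_0^1 \langle \nabla g(v + t(u-v)), u-v\rangle\,dt$, and then estimating the integrand via the Cauchy--Schwarz inequality together with the $\beta$-Lipschitz continuity of $\nabla g$.

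Next I would combine the minimality of $y$ with a judicious choice of test point. Applying the descent lemma with $v = x$ and $u = x - \tfrac{1}{\beta}\nabla g(x)$, and then using $g(y) \leq g(u)$, yields
\[
g(x) - g(y) \geq \tfrac{1}{2\beta}\|\nabla g(x)\|^2 .
\]
Unravelling the definition of $g$ turns this into
\[
f(x) - f(y) - \langle \nabla f(y), x - y\rangle \geq \tfrac{1}{2\beta}\|\nabla f(x) - \nabla f(y)\|^2 .
\]
Finally, I would symmetrize: interchanging the roles of $x$ and $y$ gives the analogous inequality, and adding the two causes the function values to cancel, leaving exactly the desired $(1/\beta)$-ism estimate. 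The main obstacle is the third step: recognizing that pairing the descent lemma with the minimizing property of $y$ at the specific test point $x - \tfrac{1}{\beta}\nabla g(x)$ is precisely what produces the sharp constant $\tfrac{1}{2\beta}$; once this is in place, the remaining manipulations are routine.
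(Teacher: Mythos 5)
Your proposal is correct. Note, however, that the paper does not prove this lemma at all: it is imported verbatim as the Baillon--Haddad theorem with a citation to Bauschke and Combettes (2010), so there is no internal proof to compare against. Your argument is the classical elementary proof of the Lipschitz-gradient case: the reduction to the auxiliary function $g(x)=f(x)-\langle\nabla f(y),x\rangle$ whose global minimizer is $y$, the descent lemma, the evaluation at the test point $x-\tfrac{1}{\beta}\nabla g(x)$ giving $f(x)-f(y)-\langle\nabla f(y),x-y\rangle\geq\tfrac{1}{2\beta}\|\nabla f(x)-\nabla f(y)\|^2$, and the symmetrization that cancels the function values -- each step is sound, and the constant comes out sharp. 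By contrast, the cited reference establishes a stronger statement (a chain of equivalences, e.g.\ that $\beta$-Lipschitz continuity of $\nabla f$ is \emph{equivalent} to $1/\beta$-cocoercivity for convex $f$, and relates both to convexity of $\tfrac{\beta}{2}\|\cdot\|^2-f$) using Fenchel conjugation and Moreau-envelope machinery. Your route is more elementary and self-contained, and it proves exactly the implication the paper needs; what it does not give you is the converse direction or the broader structural picture that the conjugate-function approach provides.
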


%=======================================================================================================
\section{Properties of an approximating net}
\label{Sec:3}
In this section we establish several properties of the net $\{x_\alpha\}$ generated by the regularized split variational inclusion (\ref{RSVI}) (see below).

Note that if $f_1:\mathscr{H}_1\rightarrow \mathscr{H}_1$ and  $f_2:\mathscr{H}_2\rightarrow\mathscr{H}_2$ are $\tau_1$-ism and $\tau_2$-ism, respectively,
then $f_1$ and $f_2$ are $\tilde{\tau}$-ism, where $\tilde{\tau} = \min\{\tau_1, \tau_2\}$.

It is well known that in general problem (\ref{SVIVI}) is ill posed. However, it is evident that under assumptions $(A3)$ and $(A4)$ problem (\ref{SVIVI}) becomes well posed.
That is, under assumptions $(A3)$ and $(A4)$, problem (\ref{SVIVI}) has a unique solution (by Lemma \ref{Lem:VIP}).

Since $f_2$ is $\tilde{\tau}$-ism, we conclude, using Lemma \ref{Lem:ism:firmly}, that $\lambda f_2$ is $\tilde{\tau}/\lambda$-ism for $\lambda>0$.
This implies that $I-\lambda f_2$ is averaged for $\lambda\in (0, 2\tilde{\tau})$. Therefore, for $\lambda\in (0, 2\tilde{\tau})$, the mapping $T=J_\lambda^{\mathscr{B}_2}(I-\lambda f_2)$ is single-valued and averaged. Hence $T$ is  a single-valued nonexpansive mapping for $\lambda\in (0, 2\tilde{\tau})$.
This implies, by Lemma \ref{Lem:ism}, that $S=A^*(I-T)A$ is $\frac{1}{2\|A\|^2}$-ism. Therefore $S$ is monotone and Lipschitz continuous.

Next, we prove the following result.
\begin{lemma}
Suppose that Assumptions $(A1)$--$(A4)$ hold and that $\lambda\in (0,2\tilde{\tau})$. Then solving the SVI \eqref{SVI} is equivalent to solving the following inclusion:

Find $\tilde{x}\in\mathscr{H}_1$ such that
\begin{equation}\label{GSVI}
0\in \mathscr{B}_1(\tilde x)+f_1(\tilde x)+A^*(I-J_\lambda^{\mathscr{B}_2}(I-\lambda f_2))A(\tilde x).
\end{equation}

\begin{proof}
Let $x^*\in\Omega$. Then we know that
\begin{align*}
& x^*=J_\lambda^{\mathscr{B}_1}(I-\lambda f_1)x^*\text{ and }\\&
Ax^*=J_\lambda^{\mathscr{B}_2}(I-\lambda f_2)A(x^*).
\end{align*}
Using these facts, we see that $x^*\in\mathscr{H}_1$ solves problem \eqref{GSVI}.

Now let us prove the converse. To this end, let $\tilde{x}\in\mathscr{H}_1$ solve problem \eqref{GSVI}. This implies that
\begin{equation}\label{GSVI1}
-A^*(I-J_\lambda^{\mathscr{B}_2}(I-\lambda f_2))A(\tilde x)\in \mathscr{B}_1(\tilde x)+f_1(\tilde x).
\end{equation}

Since $\Omega$ is nonempty by assumption, let $x^*\in\Omega$. This implies that
\begin{equation}\label{GSVI2}
-A^*(I-J_\lambda^{\mathscr{B}_2}(I-\lambda f_2))A(x^*)\in \mathscr{B}_1(\tilde x)+f_1(x^*).
\end{equation}

Since $\mathscr{B}_1+f_1$ is monotone,
\begin{align}\label{GSVI3}
\left\langle A^*(I-T)A(\tilde{x})-A^*(I-T)A(x^*), \tilde{x}-x^*\right\rangle\leq 0,
\end{align}
where $T=J_\lambda^{\mathscr{B}_2}(I-\lambda f_2)$.

That is,
\begin{align}\label{GSVI4}
\left\langle (I-T)A(\tilde{x})-(I-T)A(x^*),A(\tilde{x})-A(x^*)\right\rangle\leq 0.
\end{align}

We know that $S=A^*(I-T)A$ is $\frac{1}{2\|A\|^2}$-ism. Therefore, using \eqref{GSVI4}, we get
\begin{align}\label{GSVI5}
&\frac{1}{2\|A\|^2}\|A^*(I-T)A(\tilde{x})-A^*(I-T)A(x^*)\|^2\\&\leq\left\langle (I-T)A(\tilde{x})-(I-T)A(x^*), A(\tilde{x})-A(x^*)\right\rangle\nonumber\\&
\leq 0\nonumber.
\end{align}

Therefore,
\begin{align}\label{GSVI6}
\|A^*(I-T)A(\tilde{x})-A^*(I-T)A(x^*)\|=0.
\end{align}
That is,
\begin{align}\label{GSVI7}
A^*(I-T)A(\tilde{x})=A^*(I-T)A(x^*)=0
\end{align}
because $Ax^* = T(Ax^*)$.

This implies that
\begin{align}\label{GSVI8}
T(A\tilde{x})=A\tilde{x}+w
\end{align}
with $A^*w=0$. Combining this with the fact that $T(Ax^*)=Ax^*$, we obtain
\begin{align}\label{GSVI9}
\|T(A\tilde{x})-T(Ax^*)\|^2=\|A\tilde{x}-Ax^*\|^2+\|w\|^2.
\end{align}
Since $T$ is nonexpansive, $w=0$. This implies that
\begin{align}\label{GSVI10}
A(\tilde{x})=TA(\tilde{x}).
\end{align}
Hence
\begin{align}\label{GSVI11}
0\in\mathscr{B}_2A(\tilde{x})+f_2A(\tilde{x}).
\end{align}
Combining (\ref{GSVI1}) and (\ref{GSVI10}), we get
\begin{align}\label{GSVI12}
0\in\mathscr{B}_1(\tilde{x})+f_1(\tilde{x}).
\end{align}
Finally, using From (\ref{GSVI11}) and (\ref{GSVI12}), we now conclude that
\begin{align}\label{GSVI13}
\tilde{x}\in\Omega,
\end{align}
as asserted.
\end{proof}
\end{lemma}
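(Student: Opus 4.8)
The statement is an equivalence, so the plan is to prove the two implications separately, in each case leaning on the fixed-point reformulation (\ref{VInP:Fix}) of the two inclusions together with the inverse strong monotonicity of $S=A^*(I-T)A$, where $T=J_\lambda^{\mathscr{B}_2}(I-\lambda f_2)$, recorded in the paragraph preceding the lemma. The direction SVI $\Rightarrow$ (\ref{GSVI}) is the routine one. If $x^*\in\Omega$, then by (\ref{VInP:Fix}) we have $x^*=J_\lambda^{\mathscr{B}_1}(I-\lambda f_1)x^*$ together with $Ax^*=T(Ax^*)$; the latter gives $(I-T)Ax^*=0$, hence $A^*(I-T)Ax^*=0$, so the extra term in (\ref{GSVI}) vanishes and the inclusion reduces to $0\in\mathscr{B}_1(x^*)+f_1(x^*)$, which holds. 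Thus every $x^*\in\Omega$ solves (\ref{GSVI}).

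For the converse I would let $\tilde x$ solve (\ref{GSVI}), i.e. $-S\tilde x\in(\mathscr{B}_1+f_1)(\tilde x)$, and fix any $x^*\in\Omega$, which exists by (A4) and satisfies both $Sx^*=0$ and $0\in(\mathscr{B}_1+f_1)(x^*)$. The key is to combine two facts about the pair $(\tilde x,x^*)$: on one hand, monotonicity of $\mathscr{B}_1+f_1$ applied to the two inclusions yields $\langle S\tilde x-Sx^*,\tilde x-x^*\rangle\le 0$; on the other hand, the $\frac{1}{2\|A\|^2}$-ism of $S$ from Lemma \ref{Lem:ism} gives $\frac{1}{2\|A\|^2}\|S\tilde x-Sx^*\|^2\le\langle S\tilde x-Sx^*,\tilde x-x^*\rangle$. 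Chaining these inequalities forces $\|S\tilde x\|=\|S\tilde x-Sx^*\|=0$, that is, $A^*(I-T)A\tilde x=0$.

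I expect the main obstacle to be the passage from $A^*(I-T)A\tilde x=0$ to the strictly stronger conclusion $(I-T)A\tilde x=0$: since $A$ need not be injective, the vanishing of $A^*(I-T)A\tilde x$ only places $(I-T)A\tilde x$ in $\ker A^*$, and a priori $A\tilde x$ need not be a fixed point of $T$. To upgrade this I would set $w:=T(A\tilde x)-A\tilde x$, so that $A^*w=-A^*(I-T)A\tilde x=0$ and therefore $\langle w,A\tilde x-Ax^*\rangle=\langle A^*w,\tilde x-x^*\rangle=0$. Using $T(Ax^*)=Ax^*$ and expanding with this orthogonality gives $\|T(A\tilde x)-T(Ax^*)\|^2=\|A\tilde x-Ax^*\|^2+\|w\|^2$; since $T$ is nonexpansive the left-hand side is at most $\|A\tilde x-Ax^*\|^2$, which forces $w=0$ and hence $A\tilde x=T(A\tilde x)$. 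This fixed-point identity is exactly the assertion $0\in\mathscr{B}_2(A\tilde x)+f_2(A\tilde x)$, and feeding $A^*(I-T)A\tilde x=0$ back into (\ref{GSVI}) yields $0\in\mathscr{B}_1(\tilde x)+f_1(\tilde x)$; the two inclusions together give $\tilde x\in\Omega$, completing the equivalence.
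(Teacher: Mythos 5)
Your proposal is correct and follows essentially the same route as the paper's own proof: the forward direction via the fixed-point identities, and the converse by combining monotonicity of $\mathscr{B}_1+f_1$ with the $\frac{1}{2\|A\|^2}$-ism of $S=A^*(I-T)A$ to get $S\tilde x=0$, then the orthogonality trick $w=T(A\tilde x)-A\tilde x\in\ker A^*$ plus nonexpansiveness of $T$ to force $w=0$. In fact you make explicit one small point the paper glosses over, namely that $\langle w, A\tilde x-Ax^*\rangle=\langle A^*w,\tilde x-x^*\rangle=0$ is what justifies the Pythagorean expansion in \eqref{GSVI9}.
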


In what follows, we consider a Tikhonov-type regularization technique for the class of split variational inclusions.
In particular, for each $\alpha>0$, we associate SVI with the following regularized split variational inclusion problem:
find $\tilde x\in\mathscr{H}_1$ such that
\begin{align}\label{RSVI}
0\in \mathscr{B}_1(\tilde x)+f_1(\tilde x)+A^*(I-J_\lambda^{\mathscr{B}_2}(I-\lambda f_2))A(\tilde x)+\alpha\mathscr{F}(\tilde x),
\end{align}
where $\alpha>0$ is the regularization parameter.
Our goal is to study the connections between the solutions to \eqref{SVIVI} and (\ref{RSVI}).

To this end, we first note that it follows from Assumptions $(A1)$--$(A4)$, Lemma \ref{Lem:ism:firmly} and Lemma \ref{Lem:summax} that the operator
$\mathscr{B}_1+f_1+S$ is (maximal) monotone. Since $\mathscr{F}$ is strongly monotone, $\mathscr{B}_1+f_1+S+\alpha\mathscr{F}$ is strongly monotone for each $\alpha>0$. Therefore, RSVI (\ref{RSVI}) has a unique solution for each $\alpha>0$. We denote this unique solution by $x_\alpha$.

Now we are in a position to present various analytical properties of the net $\{x_\alpha\}$.

The following results are obtained under Assumptions $(A1)$--$(A4)$ for each $\lambda \in (0,2\tilde{\tau})$.

%=============================================
\begin{lemma}\label{Lem:bound}
The net $\{x_\alpha\}_{\alpha>0}$ is bounded.
\end{lemma}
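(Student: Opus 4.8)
The plan is to pin down a single reference solution $x^*\in\Omega$ and show that $\|x_\alpha-x^*\|$ is bounded by a constant that does not depend on $\alpha$. The engine is the monotonicity of the operator
\[
G:=\mathscr{B}_1+f_1+S,\qquad S=A^*(I-T)A,\ \ T=J_\lambda^{\mathscr{B}_2}(I-\lambda f_2),
\]
which was already shown to be (maximal) monotone, combined with the strong monotonicity of $\mathscr{F}$ from $(A3)$. First I would fix $x^*\in\Omega$, which exists by $(A4)$. By the preceding lemma, solving the SVI \eqref{SVI} is equivalent to solving $0\in G(\tilde x)$, so $x^*$ is a zero of $G$, that is, $(x^*,0)\in gra(G)$. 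On the other hand, the defining inclusion \eqref{RSVI} of $x_\alpha$ reads $-\alpha\mathscr{F}(x_\alpha)\in G(x_\alpha)$, so $(x_\alpha,-\alpha\mathscr{F}(x_\alpha))\in gra(G)$.

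Next I would feed these two pairs into the monotonicity of $G$ to obtain
\[
\langle -\alpha\mathscr{F}(x_\alpha)-0,\, x_\alpha-x^*\rangle\geq 0,
\]
and, dividing by $\alpha>0$, conclude that $\langle \mathscr{F}(x_\alpha),\, x_\alpha-x^*\rangle\leq 0$. I would then invoke the $\gamma$-strong monotonicity of $\mathscr{F}$ to write
\[
\gamma\|x_\alpha-x^*\|^2\leq \langle \mathscr{F}(x_\alpha)-\mathscr{F}(x^*),\, x_\alpha-x^*\rangle\leq -\langle \mathscr{F}(x^*),\, x_\alpha-x^*\rangle,
\]
where the last step uses the inequality just derived. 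Applying the Cauchy--Schwarz inequality to the right-hand side yields $\gamma\|x_\alpha-x^*\|^2\leq \|\mathscr{F}(x^*)\|\,\|x_\alpha-x^*\|$, and hence
\[
\|x_\alpha-x^*\|\leq \frac{\|\mathscr{F}(x^*)\|}{\gamma}.
\]
Since the right-hand side is independent of $\alpha$, the triangle inequality gives the uniform bound $\|x_\alpha\|\leq \|x^*\|+\|\mathscr{F}(x^*)\|/\gamma$, which is exactly the claimed boundedness of the net.

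The argument is short, and there is no serious analytic obstacle: the only point that genuinely requires the earlier work is the equivalence lemma, which is what licenses treating the fixed $x^*\in\Omega$ as a zero of $G$ so that $(x^*,0)$ lies on $gra(G)$. The rest is a routine combination of monotonicity of $G$, strong monotonicity of $\mathscr{F}$, and Cauchy--Schwarz, and the uniform denominator $\gamma>0$ from $(A3)$ is precisely what makes the resulting estimate independent of the regularization parameter.
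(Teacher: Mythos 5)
Your proof is correct and follows essentially the same route as the paper's: both use the equivalence lemma to view $x^*\in\Omega$ as a zero of $\mathscr{B}_1+f_1+A^*(I-T)A$, apply the monotonicity of this sum against the defining inclusion $-\alpha\mathscr{F}(x_\alpha)\in(\mathscr{B}_1+f_1+A^*(I-T)A)(x_\alpha)$, and then combine the $\gamma$-strong monotonicity of $\mathscr{F}$ with the Cauchy--Schwarz inequality to get the $\alpha$-independent bound $\|x_\alpha-x^*\|\leq\|\mathscr{F}(x^*)\|/\gamma$. The only differences are cosmetic (signs and the order in which strong monotonicity and the variational inequality are combined), so nothing further is needed.
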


\begin{proof}
Let $x^*\in\Omega$. That is, $x^*\in\mathscr{H}_1$ satisfies
\begin{equation*}\label{SVI1}
\begin{cases}
0\in\mathscr{B}_1(x^*)+f_1(x^*)\quad\text{and}\\
y^*=Ax^*\in\mathscr{H}_2\quad\text{solves}\quad 0\in\mathscr{B}_2(y^*)+f_2(y^*).
\end{cases}
\end{equation*}
\noindent
Using (\ref{GSVI}), we get
\begin{align}\label{inq1}
0\in\mathscr{B}_1(x^*)+f_1(x^*)+A^*(I-T)A(x^*).
\end{align}
\noindent
Since $x_\alpha$ is a solution to the RSVI (\ref{RSVI}), we have
\begin{align}\label{inq2}
-\alpha\mathscr{F}(x_\alpha)\in\mathscr{B}_1(x_\alpha)+f_1(x_\alpha)+A^*(I-T)A(x_\alpha).
\end{align}
\noindent
Using the monotonicity of $\mathscr{B}_1+f_1+A^*(I-T)A$, (\ref{inq1}) and (\ref{inq2}), we find that
\begin{align*}
\left\langle\alpha\mathscr{F}(x_\alpha), x^*-x_\alpha\right\rangle\geq 0,
\end{align*}
which is equivalent to
\begin{align}\label{inq3}
\left\langle\mathscr{F}(x_\alpha), x^*-x_\alpha\right\rangle\geq 0.
\end{align}
Using the assumption that $\mathscr{F}$ is $\gamma$-strongly monotone and (\ref{inq3}), we conclude that
\begin{align}\label{inq4}
\left\langle\mathscr{F}(x^*), x^*-x_\alpha\right\rangle\geq \gamma \|x^*-x_\alpha\|^2.
\end{align}
Combining the Cauchy-Schwarz inequality and (\ref{inq4}), we obtain
\begin{align*}
\gamma \|x^*-x_\alpha\|^2&\leq\|\mathscr{F}(x^*)\|\|x^*-x_\alpha\|.
\end{align*}
This, in its turn, implies that
\begin{align}\label{inq6}
\|x^*-x_\alpha\|\leq\frac{\|\mathscr{F}(x^*)\|}{\gamma}.
\end{align}
Now, using the triangle inequality and (\ref{inq6}), we see that
\begin{align*}\label{inq62}
\|x_\alpha\|\leq\|x^*-x_\alpha\|+\|x^*\|\leq\frac{\|\mathscr{F}(x^*)\|}{\gamma}+\|x^*\|.
\end{align*}
Hence the net $\{x_\alpha\}$ is indeed bounded, as asserted.
\end{proof}
%==================================================

\begin{lemma}\label{Lem:UpperBound}
For any positive $\alpha_1$ and $\alpha_2$, we have
\begin{align}
\|x_{\alpha_1}-x_{\alpha_2}\|\leq\frac{\vert\alpha_1-\alpha_2\vert}{\alpha_1} M
\end{align}
for some positive constant $M$.
\end{lemma}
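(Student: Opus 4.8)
The plan is to exploit the monotonicity of the operator $\mathscr{B}_1+f_1+S$ (where $S=A^*(I-T)A$), exactly as in the proof of Lemma \ref{Lem:bound}, but now comparing the two regularized inclusions at the parameters $\alpha_1$ and $\alpha_2$ rather than comparing $x_\alpha$ with a solution $x^*\in\Omega$. Writing $G:=\mathscr{B}_1+f_1+S$, the defining inclusions of $x_{\alpha_1}$ and $x_{\alpha_2}$ read $-\alpha_i\mathscr{F}(x_{\alpha_i})\in G(x_{\alpha_i})$ for $i=1,2$. Since $G$ is (maximal) monotone, pairing the difference of the two elements of $G$ with $x_{\alpha_1}-x_{\alpha_2}$ gives
\begin{align*}
\left\langle \alpha_2\mathscr{F}(x_{\alpha_2})-\alpha_1\mathscr{F}(x_{\alpha_1}),\, x_{\alpha_1}-x_{\alpha_2}\right\rangle\geq 0.
\end{align*}

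First I would rewrite the left-hand vector using the telescoping identity $\alpha_2\mathscr{F}(x_{\alpha_2})-\alpha_1\mathscr{F}(x_{\alpha_1})=(\alpha_2-\alpha_1)\mathscr{F}(x_{\alpha_2})+\alpha_1(\mathscr{F}(x_{\alpha_2})-\mathscr{F}(x_{\alpha_1}))$. Substituting and moving the strong-monotonicity term to the right-hand side, the $\gamma$-strong monotonicity of $\mathscr{F}$ — in the form $\alpha_1\left\langle\mathscr{F}(x_{\alpha_1})-\mathscr{F}(x_{\alpha_2}),\,x_{\alpha_1}-x_{\alpha_2}\right\rangle\geq\alpha_1\gamma\|x_{\alpha_1}-x_{\alpha_2}\|^2$ — yields
\begin{align*}
\alpha_1\gamma\|x_{\alpha_1}-x_{\alpha_2}\|^2\leq (\alpha_2-\alpha_1)\left\langle \mathscr{F}(x_{\alpha_2}),\,x_{\alpha_1}-x_{\alpha_2}\right\rangle.
\end{align*}
Applying the Cauchy-Schwarz inequality on the right and dividing by $\|x_{\alpha_1}-x_{\alpha_2}\|$ (the claim being trivial if this quantity vanishes) gives $\|x_{\alpha_1}-x_{\alpha_2}\|\leq \frac{|\alpha_1-\alpha_2|}{\alpha_1}\cdot\frac{\|\mathscr{F}(x_{\alpha_2})\|}{\gamma}$.

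It then remains to absorb the factor $\|\mathscr{F}(x_{\alpha_2})\|/\gamma$ into a single constant $M$ that is independent of $\alpha_1$ and $\alpha_2$. This is where Lemma \ref{Lem:bound} together with $(A3)$ enters: because $\{x_\alpha\}$ is bounded and $\mathscr{F}$ is $L$-Lipschitz continuous, $\mathscr{F}$ maps this bounded net into a bounded set. Concretely, fixing $x^*\in\Omega$ and invoking the estimate $\|x_\alpha-x^*\|\leq \|\mathscr{F}(x^*)\|/\gamma$ obtained in (\ref{inq6}), I would bound $\|\mathscr{F}(x_{\alpha_2})\|\leq L\|x_{\alpha_2}-x^*\|+\|\mathscr{F}(x^*)\|\leq \|\mathscr{F}(x^*)\|\bigl(1+L/\gamma\bigr)$ and set $M:=\frac{\|\mathscr{F}(x^*)\|}{\gamma}\bigl(1+\frac{L}{\gamma}\bigr)$.

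The algebra is routine; the only genuine subtlety — what I expect to be the main obstacle — is organizing the splitting so that the $\alpha_1$-weighted cross term is exactly the one controlled by strong monotonicity (rather than the $\alpha_2$-weighted one), since it is $\alpha_1$ that must appear in the denominator of the final bound. A symmetric split would instead produce a $1/\alpha_2$, so the deliberately asymmetric treatment of the two parameters in the telescoping identity is essential and must be arranged before the strong-monotonicity estimate is invoked.
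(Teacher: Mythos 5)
Your proposal is correct and follows essentially the same route as the paper's proof: monotonicity of $\mathscr{B}_1+f_1+A^*(I-T)A$ applied to the two regularized inclusions, the asymmetric split isolating $\alpha_1\left\langle\mathscr{F}(x_{\alpha_1})-\mathscr{F}(x_{\alpha_2}),x_{\alpha_1}-x_{\alpha_2}\right\rangle$ for the strong-monotonicity bound, then Cauchy--Schwarz. The only (harmless) difference is that you produce an explicit constant $M=\frac{\|\mathscr{F}(x^*)\|}{\gamma}\bigl(1+\frac{L}{\gamma}\bigr)$ via the estimate from Lemma \ref{Lem:bound}, whereas the paper simply sets $M=\sup_{\alpha>0}\{\|\mathscr{F}(x_\alpha)\|/\gamma\}$, which is finite by the same boundedness and Lipschitz arguments.
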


\begin{proof}
Let $x_{\alpha_1}, x_{\alpha_2}\in \{x_\alpha\}_{\alpha>0}$ be two solutions to the RSVI (\ref{RSVI}). Then we have
\begin{equation}\label{inq7}
\begin{cases}
-\alpha_1\mathscr{F}(x_{\alpha_1})\in\mathscr{B}_1(x_{\alpha_1})+f_1(x_{\alpha_1})+A^*(I-T)A(x_{\alpha_1})\\
-\alpha_2\mathscr{F}(x_{\alpha_2})\in\mathscr{B}_1(x_{\alpha_2})+f_1(x_{\alpha_2})+A^*(I-T)A(x_{\alpha_2}).
\end{cases}
\end{equation}
Since $\mathscr{B}_1+f_1+A^*(I-T)A$ is monotone, it follows from (\ref{inq7}) that
\begin{align}\label{inq8}
\left\langle\alpha_2\mathscr{F}(x_{\alpha_2})-\alpha_1\mathscr{F}(x_{\alpha_1}), x_{\alpha_1}-x_{\alpha_2}\right\rangle\geq 0.
\end{align}
Since $\mathscr{F}$ is $\gamma$-strongly monotone, we obtain
\begin{align}\label{inq9}
(\alpha_2-\alpha_1)\left\langle\mathscr{F}(x_{\alpha_2}), x_{\alpha_1}-x_{\alpha_2}\right\rangle&\geq\alpha_1\left\langle\mathscr{F}(x_{\alpha_1})-\mathscr{F}(x_{\alpha_2}), x_{\alpha_1}-x_{\alpha_2}\right\rangle\\&\geq \alpha_1\gamma\|x_{\alpha_1}-x_{\alpha_2}\|^2\nonumber.
\end{align}
Applying the Cauchy-Schwarz inequality, we get
\begin{align}\label{inq10}
\|x_{\alpha_1}-x_{\alpha_2}\|\leq\frac{\vert\alpha_1-\alpha_2\vert}{\alpha_1}\frac{\|\mathscr{F}(x_{\alpha_2})\|}{\gamma}.
\end{align}
Since $\mathscr{F}$ is Lipschitz continuous and the net $\{x_{\alpha}\}$ is bounded, so is the net $\{\mathscr{F}(x_{\alpha})\}$.

\noindent Setting $M=\sup_{\alpha >0}\Big\{\frac{\|\mathscr{F}(x_{\alpha})\|}{\gamma}\Big\}$,
we now see that
\begin{align*}
\|x_{\alpha_1}-x_{\alpha_2}\|\leq\frac{\vert\alpha_1-\alpha_2\vert}{\alpha_1}M.
\end{align*}
\end{proof}

%%===================================================
\begin{lemma}\label{Lem:WeakLim}
\begin{enumerate}
\item[(i)] $\omega(x_\alpha)\subset\Omega$, where $\omega(x_\alpha)$ denotes the set of all weak cluster points of the net $\{x_\alpha\}$.
\item[(ii)] $\lim_{\alpha\rightarrow 0^+} x_\alpha= u$, the unique solution of (\ref{SVIVI}).
\end{enumerate}
\end{lemma}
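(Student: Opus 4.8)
\emph{Plan.} Set $\mathscr{M}:=\mathscr{B}_1+f_1+A^*(I-T)A$, which is maximal monotone by the discussion preceding \eqref{RSVI}. By the equivalence established above, solving \eqref{SVI} is the same as solving \eqref{GSVI}, so $\Omega=\mathscr{M}^{-1}(0)$; in particular $\Omega$ is closed and convex, being the zero set of a maximal monotone operator. Throughout I would use that $\{x_\alpha\}$ is bounded (Lemma \ref{Lem:bound}) and that, since $\mathscr{F}$ is Lipschitz continuous, $\{\mathscr{F}(x_\alpha)\}$ is bounded as well.

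For (i), I would take $p\in\omega(x_\alpha)$ together with a sequence $\alpha_n\to 0^+$ such that $x_{\alpha_n}\rightharpoonup p$. Since $x_{\alpha_n}$ solves \eqref{RSVI}, we have $-\alpha_n\mathscr{F}(x_{\alpha_n})\in\mathscr{M}(x_{\alpha_n})$, and boundedness of $\{\mathscr{F}(x_{\alpha_n})\}$ forces $-\alpha_n\mathscr{F}(x_{\alpha_n})\to 0$ strongly. The key step is to pass to the limit in this inclusion: for every $(y,v)\in gra(\mathscr{M})$ monotonicity gives $\langle -\alpha_n\mathscr{F}(x_{\alpha_n})-v,\ x_{\alpha_n}-y\rangle\ge 0$, and letting $n\to\infty$ — the strongly null factor annihilates the first term against the bounded factor $x_{\alpha_n}-y$, while $x_{\alpha_n}\rightharpoonup p$ takes care of the rest — yields $\langle v-0,\ y-p\rangle\ge 0$ for all $(y,v)\in gra(\mathscr{M})$. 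Maximality of $\mathscr{M}$ then forces $(p,0)\in gra(\mathscr{M})$, i.e. $0\in\mathscr{M}(p)$, so $p\in\Omega$.

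For (ii), I would first upgrade weak convergence to strong convergence along sequences using inequality \eqref{inq4}, namely $\gamma\|x^*-x_\alpha\|^2\le\langle\mathscr{F}(x^*),\ x^*-x_\alpha\rangle$ for every $x^*\in\Omega$. Given $\alpha_n\to 0^+$ with $x_{\alpha_n}\rightharpoonup p$, part (i) gives $p\in\Omega$, so I may take $x^*=p$; since $\langle\mathscr{F}(p),\ p-x_{\alpha_n}\rangle\to 0$ by weak convergence, it follows that $\|p-x_{\alpha_n}\|\to 0$, that is, $x_{\alpha_n}\to p$ strongly. To identify $p$, I fix an arbitrary $x^*\in\Omega$ in \eqref{inq4} and let $n\to\infty$ along this strongly convergent sequence, obtaining $\langle\mathscr{F}(x^*),\ x^*-p\rangle\ge 0$ for all $x^*\in\Omega$. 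Because $\mathscr{F}$ is monotone and hemicontinuous (being Lipschitz) and $\Omega$ is closed and convex, the Minty Lemma \ref{Lem:Minty} converts this into $\langle\mathscr{F}(p),\ x^*-p\rangle\ge 0$ for all $x^*\in\Omega$; thus $p$ solves \eqref{SVIVI}, and hence $p=u$ by the uniqueness in Lemma \ref{Lem:VIP}. Finally I would close the net statement by contradiction: were $x_\alpha\not\to u$, some sequence $\alpha_n\to 0^+$ would satisfy $\|x_{\alpha_n}-u\|\ge\varepsilon$, yet boundedness yields a weakly convergent subsequence whose limit lies in $\Omega$ and, by the preceding argument, equals $u$ with strong convergence, a contradiction.

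I expect the limiting step in (i) to be the main obstacle: passing to the limit in $-\alpha_n\mathscr{F}(x_{\alpha_n})\in\mathscr{M}(x_{\alpha_n})$ under only weak convergence of $\{x_{\alpha_n}\}$ is exactly the demiclosedness of the graph of $\mathscr{M}$, and it is here that maximality — not merely monotonicity — is essential. The subsequent weak-to-strong upgrade via \eqref{inq4} and the Minty-plus-uniqueness identification are then routine.
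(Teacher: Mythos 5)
Your proposal is correct and follows essentially the same route as the paper: part (i) is the demiclosedness argument for $\mathscr{M}=\mathscr{B}_1+f_1+A^*(I-T)A$ (monotonicity to get $\left\langle v, y-p\right\rangle\geq 0$ over the graph, then maximality), and part (ii) rests on the inequality $\gamma\|x^*-x_\alpha\|^2\leq\left\langle\mathscr{F}(x^*), x^*-x_\alpha\right\rangle$ together with the Minty lemma and the uniqueness from Lemma \ref{Lem:VIP}. The only differences are economies on your side: you invoke the earlier equivalence lemma (so that $\Omega=\mathscr{M}^{-1}(0)$) where the paper re-derives that equivalence inside the proof via the $A^*w=0\Rightarrow w=0$ argument, and in (ii) you upgrade to strong convergence first (taking $x^*=p$) and then identify the limit, whereas the paper identifies the weak limit via Minty first and extracts strong convergence last -- both orderings use identical ingredients, and your explicit remark that $\Omega$ is closed and convex (needed for Minty) is a point the paper leaves implicit.
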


\begin{proof}
(i)
Let $\tilde{x}\in\omega(x_\alpha)$. Then there exists a subsequence $\{x_{\alpha_n}\}$ of  $\{x_{\alpha}\}$ such that $\alpha_n\rightarrow 0^+$ and $x_{\alpha_n}\rightharpoonup \tilde{x}$ as $n\rightarrow\infty$.

\noindent
Let $(y, z)\in gra(\mathscr{B}_1+f_1+A^*(I-T)A)$. Then we have
\begin{align}\label{inq12}
z-f_1(y)-A^*(I-T)A(y)\in\mathscr{B}_1(y).
\end{align}
On the other hand, using the definition of $\{x_\alpha\}$, we also have
\begin{align}\label{inq13}
-\alpha\mathscr{F}(x_\alpha)-f_1(x_\alpha)-A^*(I-T)A(x_\alpha)\in\mathscr{B}_1(x_\alpha).
\end{align}
Using the monotonicity of $\mathscr{B}_1$, (\ref{inq12}) and (\ref{inq13}), we obtain,
{\small
\begin{align}\label{inq14}
\left\langle z-f_1(y)-A^*(I-T)A(y)+\alpha\mathscr{F}(x_\alpha)+f_1(x_\alpha)+A^*(I-T)A(x_\alpha), y-x_\alpha\right\rangle\geq 0.
\end{align}}

\noindent
Again using the fact that $A^*(I-T)A$ is monotone, as well as (\ref{inq14}), we now get
\begin{align*}
\left\langle z-f_1(y)+\alpha\mathscr{F}(x_\alpha)+f_1(x_\alpha), y-x_\alpha\right\rangle\geq 0.
\end{align*}
That is,
\begin{align}\label{inq16}
\left\langle z+\alpha\mathscr{F}(x_\alpha), y-x_\alpha\right\rangle\geq \left\langle f_1(y)-f_1(x_\alpha), y-x_\alpha\right\rangle.
\end{align}
Since $f_1$ is monotone, it follows from (\ref{inq16}) that
\begin{align*}
\left\langle z+\alpha\mathscr{F}(x_\alpha), y-x_\alpha\right\rangle\geq 0.
\end{align*}
In particular, we have
\begin{align*}
\left\langle z+\alpha_n\mathscr{F}(x_{\alpha_n}), y-x_{\alpha_n}\right\rangle\geq 0,
\end{align*}
which is equivalent to
\begin{align}\label{inq18}
\left\langle z, y-x_{\alpha_n}\right\rangle\geq\alpha_n\left\langle \mathscr{F}(x_{\alpha_n}), x_{\alpha_n}-y\right\rangle.
\end{align}
Since $\mathscr{F}$ is Lipschitz continuous and $\{x_{\alpha_n}\}$ is bounded, the real sequence $\{\left\langle\mathscr{F}(x_{\alpha_n}), x_{\alpha_n}-y\right\rangle\}$ is bounded too.
Passing to the limit as $n\rightarrow\infty$ in (\ref{inq18}), we obtain
\begin{align}\label{inq19}
\left\langle z, y-\tilde{x}\right\rangle\geq 0,\quad\forall (y, z)\in gra(\mathscr{B}_1+f_1+A^*(I-T)A).
\end{align}
Since $\mathscr{B}_1+f_1+A^*(I-T)A$ is maximal monotone, it follows from \eqref{inq19} that
\begin{align}\label{inq20}
0\in (\mathscr{B}_1+f_1+A^*(I-T)A)(\tilde{x}).
\end{align}
This implies that
\begin{align}\label{inq21}
-f_1(\tilde{x})-A^*(I-T)A(\tilde{x})\in\mathscr{B}_1(\tilde{x}).
\end{align}
Consider now an arbitrary point $x^*\in\Omega$. Then we have $0\in\mathscr{B}_1(x^*)+f_1(x^*)+A^*(I-T)A(x^*)$ and therefore,
\begin{align}\label{inq22}
-f_1(x^*)-A^*(I-T)A(x^*)\in\mathscr{B}_1(x^*).
\end{align}
Again using the monotonicity of $\mathscr{B}_1$, it follows from (\ref{inq21}) and (\ref{inq22}) that
{\small
\begin{align}\label{inq23}
\left\langle f_1(x^*)+A^*(I-T)A(x^*)-f_1(\tilde{x})-A^*(I-T)A(\tilde{x}), \tilde{x}-x^*\right\rangle\geq 0.
\end{align}}

\noindent
Using the monotonicity of $f_1$ and (\ref{inq23}), we obtain
\begin{align*}
&\left\langle A^*(I-T)A(x^*)-A^*(I-T)A(\tilde{x}), \tilde{x}-x^*\right\rangle\\&
\geq\left\langle f_1(\tilde{x})-f_1(x^*), \tilde{x}-x^*\right\rangle\geq 0\nonumber.
\end{align*}
That is,
\begin{align}\label{inq25}
\left\langle A^*(I-T)A(\tilde{x})-A^*(I-T)A(x^*), \tilde{x}-x^*\right\rangle\leq 0.
\end{align}
Since $A^*(I-T)A$ is $\frac{1}{2\|A\|^2}$-ism, we have
\begin{align}\label{inq26}
&\frac{1}{2\|A\|^2}\|A^*(I-T)A(\tilde{x})-A^*(I-T)A(x^*)\|^2\\&\leq\left\langle A^*(I-T)A(\tilde{x})-A^*(I-T)A(x^*), \tilde{x}-x^*\right\rangle\nonumber\\&
\leq 0\nonumber.
\end{align}
Therefore,
\begin{align}\label{inq27}
\|A^*(I-T)A(\tilde{x})-A^*(I-T)A(x^*)\|=0.
\end{align}
That is,
\begin{align}\label{inq28}
A^*(I-T)A(\tilde{x})=A^*(I-T)A(x^*)=0.
\end{align}
This implies that
\begin{align}\label{ineq28:1}
T(A\tilde{x})=A\tilde{x}+w
\end{align}
with $A^*w=0$. Combining this with the fact that $T(Ax^*)=Ax^*$, we obtain
\begin{align}\label{ineq28:2}
\|T(A\tilde{x})-T(Ax^*)\|^2=\|A\tilde{x}-Ax^*\|^2+\|w\|^2.
\end{align}
Since $T$ is nonexpansive, it follows that $w=0$.
This implies that
\begin{align}\label{inq29}
A(\tilde{x})=TA(\tilde{x}).
\end{align}
Hence
\begin{align}\label{inq30}
0\in\mathscr{B}_2A(\tilde{x})+f_2A(\tilde{x}).
\end{align}
Combining (\ref{inq20}) and (\ref{inq29}), we get
\begin{align}\label{inq31}
0\in\mathscr{B}_1(\tilde{x})+f_1(\tilde{x}).
\end{align}
From (\ref{inq30}) and (\ref{inq31}), it follows that
\begin{align}\label{inq32}
\tilde{x}\in\Omega.
\end{align}
\noindent
Since $\tilde{x}$ is arbitrary, we conclude that
\begin{align}\label{inq322}
\omega(x_\alpha)\subset\Omega,
\end{align}
as asserted.

(ii)~ Let $\tilde{x}\in w(x_\alpha)$. Then there exists a subsequence $\{x_{\alpha_n}\}$ of $\{x_\alpha\}$ such that $\alpha_n\rightarrow 0^+$ and $x_{\alpha_n}\rightharpoonup \tilde{x}$ as $n\rightarrow\infty$. We already know that $\tilde{x}\in\Omega$.
Using the strong monotonicity of $\mathscr{F}$ and (\ref{inq3}), we obtain
\begin{align}\label{inq33}
\left\langle\mathscr{F}(x^*), x^*-x_\alpha\right\rangle\geq\gamma\|x^*-x_\alpha\|^2 \; \quad\forall x^*\in\Omega.
\end{align}
This implies that
\begin{align}\label{inq34}
\left\langle\mathscr{F}(x^*), x^*-x_\alpha\right\rangle\geq 0 \; \quad\forall x^*\in\Omega.
\end{align}
Setting $\alpha = \alpha_n$ and passing to the limit as $n\rightarrow\infty$ in (\ref{inq34}), we get
\begin{align}\label{inq35}
\left\langle\mathscr{F}(x^*), x^*-\tilde{x}\right\rangle\geq 0 \; \quad\forall x^*\in\Omega.
\end{align}
\noindent
Using Lemma \ref{Lem:Minty}, we get
\begin{align}\label{inq36}
\left\langle\mathscr{F}(\tilde{x}), x^*-\tilde{x}\right\rangle\geq 0 \; \quad\forall x^*\in\Omega.
\end{align}
Since the VIP (\ref{SVIVI}) has a unique solution, we have $\tilde{x}=u$. Therefore, $\omega(x_\alpha)=\{u\}$, that is, the whole net $\{x_\alpha\}$ converges weakly to $u$.
Substituting $x^*=u$ in (\ref{inq33}), we get
\begin{align}\label{inq37}
\left\langle\mathscr{F}(u), u-x_\alpha\right\rangle\geq \gamma\|u-x_\alpha\|^2.
\end{align}
This yields
\begin{align}\label{inq38}
\|u-x_\alpha\|^2\leq\left\langle\mathscr{F}(u), u-x_\alpha\right\rangle/\gamma.
\end{align}
Since $\{x_\alpha\}$ converges weakly to $u$,
passing to the limit $\alpha\rightarrow 0^+$ in (\ref{inq38}), we obtain
\begin{align}\label{inq39}
\|u-x_\alpha\|\rightarrow 0.
\end{align}
That is,
\begin{align*}
x_\alpha\rightarrow u\text{ as } \alpha\rightarrow 0^+,
\end{align*}
as asserted.
\end{proof}
%%==========================================================================================================================
\section{Our Algorithm}
\label{Sec:4}
\noindent
In this section we present an iterative algorithm for solving the RSVI (\ref{RSVI}) and present its convergence analysis.
\begin{theorem}
Suppose that Assumptions $(A1)$--$(A4)$ hold. Let $z_1\in \mathscr{H}_1$ be arbitrary. Compute
\begin{align}\label{Algo1}
z_{n+1}=J_{\lambda_n}^{\mathscr{B}_1}(z_n-\lambda_n f_1(z_n)-\lambda_n A^*(I-J_{\lambda_n}^{\mathscr{B}_2}(I-\lambda_n f_2))A(z_n)-\lambda_n\alpha_n\mathscr{F}(z_n)),
\end{align}
where the parameters satisfy the following conditions:
\begin{itemize}
\item [(B1)] $\alpha_n\in (0, 1)$, $\lim\limits_{n\to \infty}\alpha_n = 0$ and $\sum\limits_{n=1}^{\infty}\alpha_n = \infty$;
\item[(B2)] $\lim\limits_{n\to \infty}\frac{\vert\alpha_{n+1}-\alpha_n\vert}{\alpha_n^2}=0$;
\item[(B3)] $0<c<\lambda_n<\frac{1}{\rho}min \Big\{\tilde{\tau},\frac{1}{\|A\|^2}\Big\}$ for each $n\in\mathbb{N}$ with $\tilde{\tau}=\min\{\tau_1,\tau_2\}$ and $\rho>2$.
\end{itemize}
Then the sequence $\{z_n\}$ generated by the iterative algorithm \eqref{Algo1} converges strongly to a solution $u\in\Omega$ of the problem SVI \eqref{SVI}, which solves uniquely problem (\ref{SVIVI}).
\end{theorem}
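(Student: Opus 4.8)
The plan is to show that the sequence $\{z_n\}$ generated by \eqref{Algo1} tracks the net $\{x_\alpha\}$ along the diagonal $\alpha = \alpha_n$, and then invoke the convergence of the net from Lemma~\ref{Lem:WeakLim}(ii). Writing $\Gamma_n = \|z_n - x_{\alpha_n}\|^2$, where $x_{\alpha_n}$ is the unique solution of the RSVI \eqref{RSVI} with parameter $\alpha_n$, the goal is to produce a recursive estimate of the form $\Gamma_{n+1} \le (1-\theta\alpha_n)\Gamma_n + \alpha_n b_n$ with $\limsup b_{n_k} \le 0$, so that Lemma~\ref{Lem:Lastconvergence} yields $\Gamma_n \to 0$. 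Since $x_{\alpha_n} \to u$ strongly by Lemma~\ref{Lem:WeakLim}(ii), this forces $z_n \to u$.

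**First I would** set up the key operator estimate. The iteration \eqref{Algo1} is a forward–backward step for the operator $\mathscr{B}_1 + f_1 + S + \alpha_n\mathscr{F}$, where $S = A^*(I-T)A$ with $T = J_{\lambda_n}^{\mathscr{B}_2}(I-\lambda_n f_2)$, and $x_{\alpha_n}$ is its zero, i.e. a fixed point of the same forward–backward map (using \eqref{inq2} and the resolvent characterization). Using firm nonexpansiveness of $J_{\lambda_n}^{\mathscr{B}_1}$ (Lemma with resolvent properties) together with the $\tilde\tau$-ism of $f_1$ and the $\frac{1}{2\|A\|^2}$-ism of $S$, I would show that under condition (B3) the map $I - \lambda_n(f_1 + S)$ is nonexpansive (in fact averaged) on the relevant points; the constraint $\rho > 2$ and $\lambda_n < \frac{1}{\rho}\min\{\tilde\tau, 1/\|A\|^2\}$ is precisely what guarantees the step size stays in the averaging regime for the sum $f_1 + S$. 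This produces a bound
\begin{align*}
\|z_{n+1} - x_{\alpha_n}\| \le \|z_n - x_{\alpha_n}\| - \lambda_n\alpha_n\langle \mathscr{F}(z_n), z_n - x_{\alpha_n}\rangle + \text{(controlled terms)},
\end{align*}
and exploiting the $\gamma$-strong monotonicity of $\mathscr{F}$ (Assumption (A3)) on the $\mathscr{F}(z_n)$ term yields the contraction factor $(1 - \theta\lambda_n\alpha_n)$ with $\theta$ depending on $\gamma$.

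**The second ingredient** is to pass from $x_{\alpha_n}$ to $x_{\alpha_{n+1}}$. Since the recursion naturally compares $z_{n+1}$ to $x_{\alpha_n}$ but $\Gamma_{n+1}$ is defined through $x_{\alpha_{n+1}}$, I would insert the triangle inequality $\|z_{n+1}-x_{\alpha_{n+1}}\| \le \|z_{n+1}-x_{\alpha_n}\| + \|x_{\alpha_n}-x_{\alpha_{n+1}}\|$ and control the last term by Lemma~\ref{Lem:UpperBound}, which gives $\|x_{\alpha_n}-x_{\alpha_{n+1}}\| \le \frac{|\alpha_{n+1}-\alpha_n|}{\alpha_{n+1}}M$. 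Here condition (B2), $\frac{|\alpha_{n+1}-\alpha_n|}{\alpha_n^2}\to 0$, is exactly the rate needed so that this perturbation, after being absorbed into the $\alpha_n$-weighted remainder $b_n$, vanishes in the limit: the factor $\alpha_n$ in the denominator of the contraction constant matches the $\alpha_n^2$ scaling in (B2). Assembling these, I would obtain the form $\Gamma_{n+1} \le (1-\theta'\alpha_n)\Gamma_n + \alpha_n b_n$ required by Lemma~\ref{Lem:Lastconvergence}, with $\sum\alpha_n = \infty$ from (B1).

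**The main obstacle** I expect is verifying the subsequential condition $\limsup_k b_{n_k} \le 0$ under the hypothesis $\liminf_k(\Gamma_{n_k+1}-\Gamma_{n_k})\ge 0$. The remainder $b_n$ will contain a cross term of the form $\langle \mathscr{F}(x_{\alpha_n}), x_{\alpha_n}-z_n\rangle$ or similar, and to show it is asymptotically nonpositive I would use that $x_{\alpha_n}\rightharpoonup u$ (indeed converges strongly), that $\{z_n\}$ is bounded, and that along the relevant subsequence the boundedness of $\{x_{n_k}\}$ forces weak cluster points into $\Omega$ via the characterization developed for the net in Lemma~\ref{Lem:WeakLim}(i). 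Combining the variational inequality \eqref{SVIVI} satisfied by $u$ with the strong convergence $x_{\alpha_n}\to u$ should drive this cross term to a nonpositive limit. Care is needed to ensure the $\lambda_n$-dependence of $T$ (since $T$ itself changes with $n$) does not spoil the ism constant $\frac{1}{2\|A\|^2}$; this is handled because $\lambda_n$ stays bounded away from $0$ by (B3) ($c < \lambda_n$) and the nonexpansiveness of $T$ holds uniformly for $\lambda_n \in (0, 2\tilde\tau)$, which the upper bound in (B3) guarantees.
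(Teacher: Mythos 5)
Your proposal is correct and follows essentially the same route as the paper: compare $z_n$ with the regularized solution $x_{\alpha_n}$, extract a contraction factor of the form $1-\theta\lambda_n\alpha_n$ from the monotonicity/ism structure of $\mathscr{B}_1+f_1+S+\alpha_n\mathscr{F}$, shift from $x_{\alpha_n}$ to $x_{\alpha_{n+1}}$ via Lemma~\ref{Lem:UpperBound}, apply Lemma~\ref{Lem:Lastconvergence}, and conclude with the strong convergence $x_{\alpha_n}\to u$ of Lemma~\ref{Lem:WeakLim}(ii).

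One clarification is worth making, because your anticipated \emph{main obstacle} does not in fact arise. When the estimates are carried out (as in the paper), every cross term --- those involving $f_1(z_n)-f_1(x_{\alpha_n})$, $S(z_n)-S(x_{\alpha_n})$ and $\mathscr{F}(z_n)-\mathscr{F}(x_{\alpha_n})$ --- is absorbed by the ism/strong monotonicity properties together with Young's inequality, at the cost of a multiple of $\|z_n-z_{n+1}\|^2$ which is nonpositive in the final recursion precisely because $\rho>2$. What survives as the error term is purely parametric:
\begin{align*}
b_n=\frac{1-\epsilon_4}{\epsilon_4^{2}}\,\frac{(\alpha_{n+1}-\alpha_n)^2}{\alpha_n^{2}}\,M^2
\quad\text{with }\epsilon_4=\tfrac{\gamma\lambda_n\alpha_n}{2},
\qquad\text{so }b_n=O\!\left(\frac{1}{\lambda_n^{2}}\left(\frac{|\alpha_{n+1}-\alpha_n|}{\alpha_n^{2}}\right)^{2}\right),
\end{align*}
which tends to zero outright by (B2) and $\lambda_n>c$. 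Hence the subsequential hypothesis of Lemma~\ref{Lem:Lastconvergence} is satisfied trivially ($\lim_n b_n=0$), and no weak-cluster-point, Minty, or variational-inequality analysis of the iterates $\{z_n\}$ is needed; all of that work is already encapsulated in Lemma~\ref{Lem:WeakLim}, which is exactly the point of tracking the net rather than comparing $z_n$ directly with $u$. Finally, a small imprecision in your sketch: since $x_{\alpha_n}$ is a fixed point of the same forward--backward map including the term $\lambda_n\alpha_n\mathscr{F}$, the regularization term in your key inequality should be the difference $\langle\mathscr{F}(z_n)-\mathscr{F}(x_{\alpha_n}),z_n-x_{\alpha_n}\rangle$, not $\langle\mathscr{F}(z_n),z_n-x_{\alpha_n}\rangle$; it is this difference, via $\gamma$-strong monotonicity, that produces the contraction factor.
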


\begin{proof}
Let $x_{\alpha_n}$ be the solution to the RSVI with $\alpha$ replaced by $\alpha_n$ and $T:=A^*(I-J_{\lambda}^{\mathscr{B}_2}(I-\lambda f_2))A$.
We have
\begin{align}\label{ineq40}
0\in \mathscr{B}_1x_{\alpha_n}+ f_1(x_{\alpha_n})+ A^*(I-J_{\tau}^{\mathscr{B}_2}(I-\tau f_2))Ax_{\alpha_n}+\alpha_n\mathscr{F}x_{\alpha_n}.
\end{align}
Using (\ref{Algo1}), we obtain
\begin{align*}
z_n-\lambda_n f_1(z_n)-\lambda_n T(z_n)-\lambda_n\alpha_n\mathscr{F}(z_n)\in z_{n+1}+\lambda_n\mathscr{B}_1(z_{n+1}),
\end{align*}
or equivalently,
\begin{align}\label{ineq42}
z_n-z_{n+1}-\lambda_n f_1(z_n)-\lambda_n T(z_n)-\lambda_n\alpha_n\mathscr{F}(z_n)\in\lambda_n\mathscr{B}_1(z_{n+1}).
\end{align}
Similarly, using (\ref{ineq40}), we get
\begin{align}\label{ineq43}
-\lambda_n f_1(x_{\alpha_n})-\lambda_n T(x_{\alpha_n})-\lambda_n\alpha_n\mathscr{F}(x_{\alpha_n})\in\lambda_n\mathscr{B}_1(x_{\alpha_n}).
\end{align}
Since $\mathscr{B}_1$ is monotone, combining (\ref{ineq42}) and (\ref{ineq43}), we find that
{\tiny \begin{align*}
\left\langle z_n-z_{n+1}-\lambda_n f_1(z_n)-\lambda_n T(z_n)-\lambda_n\alpha_n\mathscr{F}(z_n)+\lambda_n f_1(x_{\alpha_n})+\lambda_n T(x_{\alpha_n})+\lambda_n\alpha_n\mathscr{F}(x_{\alpha_n}), z_{n+1}-x_{\alpha_n}\right\rangle\geq 0.
\end{align*}}

\noindent
This implies that
\begin{align}\label{ineq45}
\left\langle z_n-z_{n+1},z_{n+1}-x_{\alpha_n}\right\rangle\geq & \lambda_n\left\langle f_1(z_n)-f_1(x_{\alpha_n}),z_{n+1}-x_{\alpha_n}\right\rangle\\&+\lambda_n\left\langle T(z_n)-T(x_{\alpha_n}), z_{n+1}-x_{\alpha_n}\right\rangle\nonumber\\&+\lambda_n\alpha_n\left\langle   \mathscr{F}(z_n)- \mathscr{F}(x_{\alpha_n}),z_{n+1}-x_{\alpha_n}\right\rangle\nonumber.
\end{align}
Next, we observe that
\begin{align}\label{ineq46}
\left\langle z_n-z_{n+1},z_{n+1}-x_{\alpha_n}\right\rangle=\frac{1}{2}[\|z_n-x_{\alpha_n}\|^2-\|z_n-z_{n+1}\|^2-\|z_{n+1}-x_{\alpha_n}\|^2].
\end{align}
Since $f_1$ is $\tilde{\tau}$-ism, we have
\begin{align}\label{ineq47}
\left\langle f_1(z_n)-f_1(x_{\alpha_n}),z_{n+1}-x_{\alpha_n}\right\rangle&=\left\langle f_1(z_n)-f_1(x_{\alpha_n}),z_{n+1}-z_n\right\rangle\\&+\left\langle f_1(z_n)-f_1(x_{\alpha_n}),z_n-x_{\alpha_n}\right\rangle\nonumber\\&
\geq \left\langle f_1(z_n)-f_1(x_{\alpha_n}),z_{n+1}-z_n\right\rangle\nonumber\\&
+\tilde{\tau}\|f_1(z_n)-f_1(x_{\alpha_n})\|^2.\nonumber
\end{align}
Using the fact $T$ is $\frac{1}{2\|A\|^2}$-ism, we get
\begin{align}\label{ineq48}
\left\langle T(z_n)-T(x_{\alpha_n}),z_{n+1}-x_{\alpha_n}\right\rangle&=\left\langle T(z_n)-T(x_{\alpha_n}),z_{n+1}-z_n\right\rangle\\&+\left\langle T(z_n)-T(x_{\alpha_n}),z_n-x_{\alpha_n}\right\rangle\nonumber\\&
\geq \left\langle T(z_n)-T(x_{\alpha_n}),z_{n+1}-z_n\right\rangle\nonumber\\&
+\frac{1}{2\|A\|^2}\|T(z_n)-T(x_{\alpha_n})\|^2.\nonumber
\end{align}
Since $\mathscr{F}$ is $\gamma$-strongly monotone, we have
\begin{align}\label{ineq49}
\left\langle \mathscr{F}(z_n)-\mathscr{F}(x_{\alpha_n}),z_{n+1}-x_{\alpha_n}\right\rangle&=\left\langle \mathscr{F}(z_n)-\mathscr{F}(x_{\alpha_n}),z_{n+1}-z_n\right\rangle\\&+\left\langle \mathscr{F}(z_n)-\mathscr{F}(x_{\alpha_n}),z_n-x_{\alpha_n}\right\rangle\nonumber\\&
\geq \left\langle \mathscr{F}(z_n)-\mathscr{F}(x_{\alpha_n}),z_{n+1}-z_n\right\rangle+\gamma\|z_n-x_{\alpha_n}\|^2.\nonumber
\end{align}
Using (\ref{ineq46})--(\ref{ineq49}) in (\ref{ineq45}), we obtain
\begin{align}\label{ineq50}
\frac{1}{2}[\|z_n-x_{\alpha_n}\|^2- & \|z_n-z_{n+1}\|^2-\|z_{n+1}-x_{\alpha_n}\|^2]\\&
\geq \lambda_n\left\langle f_1(z_n)-f_1(x_{\alpha_n}),z_{n+1}-z_n\right\rangle+\lambda_n\tilde{\tau}\|f_1(z_n)-f_1(x_{\alpha_n})\|^2\nonumber\\&
+\lambda_n\left\langle T(z_n)-T(x_{\alpha_n}),z_{n+1}-z_n\right\rangle+\frac{\lambda_n}{2\|A\|^2}\|T(z_n)-T(x_{\alpha_n})\|^2\nonumber\\&
+\lambda_n\alpha_n\left\langle \mathscr{F}(z_n)-\mathscr{F}(x_{\alpha_n}),z_{n+1}-z_n\right\rangle+\gamma\lambda_n\alpha_n\|z_n-x_{\alpha_n}\|^2.\nonumber
\end{align}
This implies that
\begin{align}\label{ineq51}
\|z_{n+1}-x_{\alpha_n}\|^2 &\leq  \|z_n-x_{\alpha_n}\|^2-\|z_n-z_{n+1}\|^2\\&
-2\lambda_n\left\langle f_1(z_n)-f_1(x_{\alpha_n}),z_{n+1}-z_n\right\rangle\nonumber\\&-2\lambda_n\tilde{\tau}\|f_1(z_n)-f_1(x_{\alpha_n})\|^2-2\lambda_n\left\langle T(z_n)-T(x_{\alpha_n}),z_{n+1}-z_n\right\rangle\nonumber\\&-\frac{\lambda_n}{\|A\|^2}\|T(z_n)-T(x_{\alpha_n})\|^2
-2\lambda_n\alpha_n\left\langle \mathscr{F}(z_n)-\mathscr{F}(x_{\alpha_n}),z_{n+1}-z_n\right\rangle\nonumber\\&-2\gamma\lambda_n\alpha_n\|z_n-x_{\alpha_n}\|^2.\nonumber
\end{align}
Using Young's inequality and (\ref{ineq51}), we have
\begin{align*}
\|z_{n+1}-x_{\alpha_n}\|^2 &\leq  (1-2\gamma\lambda_n\alpha_n)\|z_n-x_{\alpha_n}\|^2+\lambda_n\epsilon_1\|f_1(z_n)-f_1(x_{\alpha_n})\|^2\\&
+\frac{\lambda_n}{\epsilon_1}\|z_n-z_{n+1}\|^2-2\lambda_n\tilde{\tau}\|f_1(z_n)-f_1(x_{\alpha_n})\|^2\\&
+\lambda_n\epsilon_2\|T(z_n)-T(x_{\alpha_n})\|^2
+\frac{\lambda_n}{\epsilon_2}\|z_n-z_{n+1}\|^2\\& 
-\frac{\lambda_n}{\|A\|^2}\|T(z_n)-T(x_{\alpha_n})\|^2+\lambda_n\alpha_n L\epsilon_3\|z_n-x_{\alpha_n}\|^2\\&
+\frac{\lambda_n\alpha_n L}{\epsilon_3}\|z_n-z_{n+1}\|^2\nonumber-\|z_n-z_{n+1}\|^2,
\end{align*}
where $\epsilon_1,\epsilon_2,\epsilon_3>0.$ That is,
\begin{align}\label{ineq53}
\|z_{n+1}-x_{\alpha_n}\|^2 &\leq  (1-2\gamma\lambda_n\alpha_n+\lambda_n\alpha_n L\epsilon_3)\|z_n-x_{\alpha_n}\|^2\\&
+\lambda_n(\epsilon_1-2\tilde{\tau})\|f_1(z_n)-f_1(x_{\alpha_n})\|^2\nonumber\\&
+\lambda_n\Big(\epsilon_2-\frac{1}{\|A\|^2}\Big)\|T(z_n)-T(x_{\alpha_n})\|^2\nonumber\\&
-\Big(1-\frac{\lambda_n}{\epsilon_1}-\frac{\lambda_n}{\epsilon_2}-\frac{\lambda_n\alpha_n L}{\epsilon_3}\Big)\|z_n-z_{n+1}\|^2\nonumber.
\end{align}
Choose $\epsilon_1=\lambda_n\rho=\epsilon_2$, $\epsilon_3=\gamma/L$.
Then it follows that there exists $N_0\in\mathbb{N}$ such that
\begin{align}\label{ineq54}
\|z_{n+1}-x_{\alpha_n}\|^2 &\leq  (1-\gamma\lambda_n\alpha_n)\|z_n-x_{\alpha_n}\|^2-\frac{1}{2}\Big(1-\frac{2}{\rho}\Big)\|z_n-z_{n+1}\|^2 \; \quad\forall n\geq N_0.
\end{align}
Set $\kappa_n=\gamma\lambda_n\alpha_n$. Then it follows from (\ref{ineq54}) that
\begin{align}\label{ineq55}
\|z_{n+1}-x_{\alpha_n}\|^2 &\leq  (1-\kappa_n)\|z_n-x_{\alpha_n}\|^2-\frac{1}{2}\Big(1-\frac{2}{\rho}\Big)\|z_n-z_{n+1}\|^2 \; \quad\forall n\geq N_0.
\end{align}
Now, once again using Young's inequality and Lemma \ref{Lem:UpperBound}, we obtain
\begin{align}\label{ineq56}
\|z_{n+1}-x_{\alpha_n}\|^2 &=\|z_{n+1}-x_{\alpha_{n+1}}+x_{\alpha_{n+1}}-x_{\alpha_n}\|^2\\&
=\|z_{n+1}-x_{\alpha_{n+1}}\|^2+2\left\langle z_{n+1}-x_{\alpha_{n+1}},x_{\alpha_{n+1}}-x_{\alpha_n}\right\rangle\nonumber\\&
+\|x_{\alpha_{n+1}}-x_{\alpha_n}\|^2\nonumber\\&
\geq \|z_{n+1}-x_{\alpha_{n+1}}\|^2-\epsilon_4 \|z_{n+1}-x_{\alpha_{n+1}}\|^2\nonumber\\&
-\frac{\|x_{\alpha_{n+1}}-x_{\alpha_n}\|^2}{\epsilon_4}+\|x_{\alpha_{n+1}}-x_{\alpha_n}\|^2\nonumber\\&
= (1-\epsilon_4)\|z_{n+1}-x_{\alpha_{n+1}}\|^2-\frac{(1-\epsilon_4)}{\epsilon_4}\|x_{\alpha_{n+1}}-x_{\alpha_n}\|^2\nonumber\\&
\geq (1-\epsilon_4)\|z_{n+1}-x_{\alpha_{n+1}}\|^2-\frac{(1-\epsilon_4)}{\epsilon_4}\frac{(\alpha_{n+1}-\alpha_n)^2}{\alpha_n^2}M^2\nonumber,
\end{align}
where $\epsilon_4>0$. From (\ref{ineq55}) and (\ref{ineq56}) it follows that
\begin{align}\label{ineq57}
(1-\epsilon_4)\|z_{n+1}-x_{\alpha_{n+1}}\|^2\leq  (1-\kappa_n)&\|z_n-x_{\alpha_n}\|^2-\frac{1}{2}\Big(1-\frac{2}{\rho}\Big)\|z_n-z_{n+1}\|^2\\&
+\frac{(1-\epsilon_4)}{\epsilon_4}\frac{(\alpha_{n+1}-\alpha_n)^2}{\alpha_n^2}M^2 \; \quad\forall n\geq N_0\nonumber.
\end{align}
Choose $\epsilon_4=\frac{\kappa_n}{2}$ and $a_n=\frac{\epsilon_4}{1-\epsilon_4}$. Then it follows from (\ref{ineq57}) that
\begin{align}\label{ineq58}
\|z_{n+1}-x_{\alpha_{n+1}}\|^2\leq(1-a_n)&\|z_n-x_{\alpha_n}\|^2-\frac{1}{2(1-\epsilon_4)}\Big(1-\frac{2}{\rho}\Big)\|z_n-z_{n+1}\|^2\\&
+\frac{1}{\epsilon_4}\frac{(\alpha_{n+1}-\alpha_n)^2}{\alpha_n^2}M^2 \; \quad\forall n\geq N_0\nonumber.
\end{align}
This implies that
\begin{align}\label{ineq59}
\|z_{n+1}-x_{\alpha_{n+1}}\|^2\leq(1-a_n)&\|z_n-x_{\alpha_n}\|^2-\frac{1}{2(1-\epsilon_4)}\Big(1-\frac{2}{\rho}\Big)\|z_n-z_{n+1}\|^2\\&
+a_n\frac{(1-\epsilon_4)}{\epsilon_4^2}\frac{(\alpha_{n+1}-\alpha_n)^2}{\alpha_n^2}M^2 \; \quad\forall n\geq N_0\nonumber.
\end{align}
Let $\Gamma_n=\|z_n-x_{\alpha_n}\|^2$ and $b_n=\frac{(1-\epsilon_4)}{\epsilon_4^2}\frac{(\alpha_{n+1}-\alpha_n)^2}{\alpha_n^2}M^2$. Then inequality (\ref{ineq59}) is reduced to the following form:
\begin{align}\label{ineq60}
\Gamma_{n+1}\leq(1-a_n)\Gamma_n+a_nb_n \;
\quad\forall n\geq N_0.
\end{align}

Since $a_n\in (0,1)$, $\sum_{n=1}^{\infty}a_n=+\infty$ and $\lim_{n\rightarrow\infty}b_n = 0$, we can invoke Lemma \ref{Lem:Lastconvergence} to conclude that
$\lim_{n\rightarrow\infty}\Gamma_n=0$. Thus $z_n- x_{\alpha_n}\to 0$ as $n\to \infty$. Combining this with Lemma \ref{Lem:WeakLim} (ii),
we find that $z_n\rightarrow u$ as $n\rightarrow\infty$, as asserted. This completes the proof.
\end{proof}

%==================================================================================================================================
%Using Lemma \ref{Lem:bound}, it is clear that $\{b_n\}$ is bounded. We next show that for every subsequence $\{\Gamma_{n_k}\}$ of $\{\Gamma_n\}$ that satisfies %$\liminf_{k\rightarrow\infty}(\Gamma_{n_k+1}-\Gamma_{n_k})\geq 0$ implies that $\limsup_{k\rightarrow\infty}\Gamma_{n_k}\leq 0$. Indeed, let subsequence $\{\Gamma_{n_k}\}$ be a %subsequence of $\{\Gamma_n\}$ that satisfies $\liminf_{k\rightarrow\infty}(\Gamma_{n_k}-\Gamma_{n_k})\geq 0$.
%It is clear that $a_n\in (0,1)$, $\sum_{n=1}^{\infty}a_n=+\infty$ and  $\limsup_{k\rightarrow\infty}b_{n_k} = 0$. Therefore, by Lemma \ref{Lem:Lastconvergence}, we conclude that %$\lim_{n\rightarrow\infty}\Gamma_n=0$.
%==========================================================================================

\begin{remark}\label{remark:Weakalgo}
If we put $\alpha_n = 0$ in \eqref{Algo1} for all $n\in\mathbb{N}$, then we obtain the following iterative scheme: For a given $z_1\in\mathscr{H}$, compute
\begin{equation}\label{Algo2}
z_{n+1}=J_{\lambda_n}^{\mathscr{B}_1}(z_n-\lambda_n f_1(z_n)-\lambda_n A^*(I-T)A(z_n)),
\end{equation}
where $T=J_{\lambda_n}^{\mathscr{B}_2}(I-\lambda_n f_2)$. Note that \eqref{Algo2} is different from the iterative method \eqref{algo:moudafi} which was proposed by Moudafi for solving the SVI (\ref{SVI}). Thus \eqref{Algo2} provides a novel scheme for solving the SVI \eqref{SVI}.
Next, we show that the sequence generated by \eqref{Algo2} converges weakly to a solution of \eqref{SVI} under certain assumptions.
\end{remark}

\begin{corollary}\label{Thm:Weakalgo}
Under assumptions (A1), (A2), (A4) and (B3), the sequence $\{z_n\}$ generated by \eqref{Algo2} converges weakly to a solution of the SVI \eqref{SVI}.
\end{corollary}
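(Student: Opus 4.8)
The plan is to follow the classical two-step scheme for weak convergence encoded in Lemma \ref{Xu:lem}: first show that $\{z_n\}$ is Fej\'er monotone with respect to $\Omega$ (so that $\lim_{n}\|z_n-p\|$ exists for every $p\in\Omega$), and then show that every weak subsequential limit of $\{z_n\}$ lies in $\Omega$. Since $\Omega$ is the zero set of the maximal monotone operator $\mathscr{B}_1+f_1+A^*(I-T)A$, it is closed and convex, so Lemma \ref{Xu:lem} applies once both properties are verified and yields weak convergence of the whole sequence to a point of $\Omega$.

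For the first step I would run the energy computation of the previous theorem verbatim, but with the regularization net $x_{\alpha_n}$ replaced by an arbitrary fixed $p\in\Omega$ and with $\alpha_n=0$, so that every term carrying $\mathscr{F}$ disappears. Writing $T_n=J_{\lambda_n}^{\mathscr{B}_2}(I-\lambda_n f_2)$ and $S_n=A^*(I-T_n)A$, and using that $p\in\Omega$ forces $Ap=T_n(Ap)$ and hence $S_n(p)=0$, the resolvent inclusions for $z_{n+1}$ and for $p$, together with the monotonicity of $\mathscr{B}_1$, the $\tilde{\tau}$-ism of $f_1$, the $\frac{1}{2\|A\|^2}$-ism of $S_n$ and Young's inequality (Lemma \ref{Lem:Young}), give, after the choice $\epsilon_1=\epsilon_2=\lambda_n\rho$ exactly as in \eqref{ineq45}--\eqref{ineq54},
\[
\|z_{n+1}-p\|^2\le\|z_n-p\|^2+\lambda_n(\lambda_n\rho-2\tilde{\tau})\|f_1(z_n)-f_1(p)\|^2+\lambda_n\Big(\lambda_n\rho-\frac{1}{\|A\|^2}\Big)\|S_n(z_n)\|^2-\Big(1-\frac{2}{\rho}\Big)\|z_{n+1}-z_n\|^2.
\]
Condition (B3) makes the two middle coefficients nonpositive and $1-2/\rho>0$, so $\|z_{n+1}-p\|\le\|z_n-p\|$; thus $\lim_n\|z_n-p\|$ exists and, since the telescoped differences vanish, $\|z_{n+1}-z_n\|\to0$.

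For the second step, let $z_{n_k}\rightharpoonup\bar z$. Because $\{\lambda_n\}$ is bounded inside $(0,2\tilde{\tau})$ by (B3), I would pass to a further subsequence along which $\lambda_{n_k}\to\bar\lambda\in(0,2\tilde{\tau})$, and set $\bar T=J_{\bar\lambda}^{\mathscr{B}_2}(I-\bar\lambda f_2)$, $\bar S=A^*(I-\bar T)A$. Mirroring the argument of Lemma \ref{Lem:WeakLim}(i), I would take an arbitrary $(y,z)\in gra(\mathscr{B}_1+f_1+\bar S)$, pair the defining inclusion against the resolvent inclusion $\frac{z_n-z_{n+1}}{\lambda_n}-f_1(z_n)-S_n(z_n)\in\mathscr{B}_1(z_{n+1})$ via the monotonicity of $\mathscr{B}_1$, and then split $\langle f_1(y)-f_1(z_n),y-z_{n+1}\rangle=\langle f_1(y)-f_1(z_n),y-z_n\rangle+\langle f_1(y)-f_1(z_n),z_n-z_{n+1}\rangle$ (and similarly for $\bar S$). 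The first summand is nonnegative by monotonicity of $f_1$ (resp. $\bar S$), while the second is controlled by the Lipschitz continuity of $f_1$ (resp. $\bar S$) times $\|z_n-z_{n+1}\|\to0$. Replacing $S_n$ by $\bar S$ costs the extra term $\langle \bar S(z_n)-S_n(z_n),y-z_{n+1}\rangle$, which I would drive to zero using $\|\bar S(z_n)-S_n(z_n)\|\le\|A\|\,\|\bar T(Az_n)-T_n(Az_n)\|\to0$, a consequence of the continuous dependence of $J_\lambda^{\mathscr{B}_2}$ and of $I-\lambda f_2$ on $\lambda$ together with the boundedness of $\{Az_n\}$. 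Since $\frac{z_n-z_{n+1}}{\lambda_n}\to0$ and $z_{n+1}\rightharpoonup\bar z$, taking $\liminf_k$ yields $\langle z,y-\bar z\rangle\ge0$ for every $(y,z)\in gra(\mathscr{B}_1+f_1+\bar S)$. Maximal monotonicity of this operator (Lemma \ref{Lem:summax}) then gives $0\in(\mathscr{B}_1+f_1+\bar S)(\bar z)$, and the equivalence between \eqref{SVI} and \eqref{GSVI}, valid for the parameter $\bar\lambda\in(0,2\tilde{\tau})$, shows $\bar z\in\Omega$.

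The main obstacle is precisely this second step, and specifically the fact that the stepsize $\lambda_n$ is allowed to vary: the iteration produces the operators $S_n$ (and the resolvent $J_{\lambda_n}^{\mathscr{B}_1}$) at the running parameter $\lambda_n$, whereas the limiting inclusion must be written for a single fixed parameter $\bar\lambda$. Handling this requires both the extraction $\lambda_{n_k}\to\bar\lambda$ and a quantitative continuity of the resolvents in the parameter, combined with the argument mismatch between $z_n$ (where $f_1$ and $S_n$ are evaluated) and $z_{n+1}$ (the resolvent output), which is absorbed by $\|z_{n+1}-z_n\|\to0$ and the Lipschitz continuity of $f_1$ and $S_n$. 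In contrast to the strongly monotone case, no regularization net is available here, so the identification of the weak limit must be carried out intrinsically through the maximal monotonicity of $\mathscr{B}_1+f_1+\bar S$; once it is done, Lemma \ref{Xu:lem} closes the argument.
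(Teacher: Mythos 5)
Your proposal is correct and follows essentially the same route as the paper's proof: the $\alpha_n=0$ energy estimate yields Fej\'er monotonicity of $\{z_n\}$ with respect to $\Omega$ together with $\|z_{n+1}-z_n\|\to 0$, weak subsequential limits are identified as zeros of the maximal monotone operator $\mathscr{B}_1+f_1+A^*(I-T)A$ via the graph--monotonicity argument, and Lemma \ref{Xu:lem} then upgrades this to weak convergence of the whole sequence. The only substantive difference is that you explicitly treat the dependence of $T$ (hence of $S_n$) on the variable stepsize $\lambda_n$, extracting $\lambda_{n_k}\to\bar\lambda$ and invoking continuity of $J_\lambda^{\mathscr{B}_2}$ in the parameter, whereas the paper's proof silently works with a fixed $T$; this is a careful patch of a detail the paper suppresses rather than a different method.
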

\begin{proof}
Let $u\in \Omega$.
Taking $\alpha_n = 0$ for all natural numbers $n$ in the analysis \eqref{ineq40}--\eqref{ineq55}, we find that
\begin{eqnarray}
\|z_{n+1} - u\|^2&\leq& \|z_n - u\|^2 - \frac{1}{2}\Big(1-\frac{2}{\rho}\Big)\|z_n - z_{n+1}\|^2\label{61}\\
&\leq& \|z_n - u\|^2.\label{62}
\end{eqnarray}
Thus the sequence $\{\|z_n- u\|\}$ is monotonically decreasing. Therefore $\{\|z_n - u\|\}$ is bounded and hence $\{z_n\}$ is bounded.
Also, the limit of the sequence $\{\|z_n- u\|\}$ exists. Since $\rho > 2$, it follows from \eqref{61} that
\begin{equation*}\label{63}
\lim_{n\to \infty}\|z_{n+1} - z_n\| = 0.
\end{equation*}
Consider a weakly convergent subsequence of the sequence $\{z_n\}$, say $z_{n_k}\rightharpoonup \bar{x}$.
We claim that $\bar{x}\in \Omega$. Indeed, let $(x,y)\in gra(\mathscr{B}_1 + f_1 + A^*(I-T)A)$ be arbitrary. Then we have
\begin{equation}\label{64}
y- f_1x - A^*(I-T)Ax\in \mathscr{B}_1x.
\end{equation} 
It follows from \eqref{Algo2} that
\begin{equation}\label{65}
 z_{n_k} - z_{n_k+1}- \lambda_{n_k}f_1z_{n_k} - \lambda_{n_k}A^*(I-T)Az_{n_k}\in \lambda_{n_k}\mathscr{B}_1z_{n_k+1}.
\end{equation}
Using \eqref{64} and \eqref{65}, we infer that 
\begin{equation}\label{66}
\langle \frac{z_{n_k} - z_{{n_k}+1}}{\lambda_{n_k}}- f_1z_{n_k} - A^*(I-T)Az_{n_k} - (y- f_1x - A^*(I-T)Ax), z_{{n_k}+1} - x\rangle \geq 0.
\end{equation}
Taking the limit as $k\to \infty$ and using \eqref{66}, we find that
\begin{eqnarray}\label{67}
\langle y, x - \bar{x}\rangle &\geq& \langle f_1x + A^*(I-T)Ax - f_1\bar{x} - A^*(I-T)A\bar{x}, x - \bar{x}\rangle \nonumber\\
&\geq& 0.
\end{eqnarray}
Since $\mathscr{B}_1 + f_1 + A^*(I-T)A$ is maximal monotone, it follows from \eqref{67} that $0\in (\mathscr{B}_1 + f_1 + A^*(I-T)A)\bar{x}$ and hence $\bar{x}\in \Omega$, as claimed. Using Lemma \ref{Xu:lem}, we now conclude that the whole sequence $\{z_n\}$ converges weakly to a point in $\Omega$, as asserted. This completes the proof.
\end{proof}

%==========================================================================================================================
%\begin{remark}
%If we put $f_1=f_2 = 0$ in \eqref{Algo2}, then we obtain the following iterative scheme: For given $x_1\in\mathscr{H}$, compute
%\begin{equation}\label{Algo2}
%x_{n+1}=J_{\lambda_n}^{\mathscr{B}_1}(x_n-\lambda_n A^*(I-J_{\lambda_n}^{\mathscr{B}_2})Ax_n).
%\end{equation}
%\end{remark}

%%==========================================================================================================================
\section{Applications}
\label{Sec:5}
Let $\mathscr{H}$ be a real Hilbert space and $f:\mathscr{H}\rightarrow\mathbb{R}\cup\{+\infty\}$ be a lower semi-continuous convex function. Recall that the subdifferential of $f$ at a point $x\in\mathscr{H}$, denoted by $\partial f(x)$, is defined by
\begin{align}
\partial f(x)=\{u\in H: f(y)\geq f(x)+\left\langle u, y-x\right\rangle \; \quad \forall y\in\mathscr{H}\}.
\end{align}

Let $C$ be a nonempty, closed and convex subset of $\mathscr{H}$. Then the indicator function of $C$ is defined by
\begin{equation}
i_C(x) :=
\begin{cases}
0, &\text{  if }x\in C\\
\infty,&\text{ otherwise}.
\end{cases}
\end{equation}

The normal cone to a set $C$ at a point $x$ in $C$, denoted by $\mathscr{N}_C(x)$, is defined by
\begin{align}
\mathscr{N}_C(x) :=
\begin{cases}
\{u\in \mathscr{H}: \left\langle u, y-x\right\rangle\leq 0 \; \quad \forall y\in C\}, &\quad\text{if } x\in C\\
\emptyset, &\quad \text{otherwise}.
\end{cases}
\end{align}

It is well known that if $x \in int(C)\neq\emptyset$, where $int(C$ denotes the set of all interior points of $C$, then $\mathscr{N}_C(x)=\left\lbrace 0\right\rbrace$. 
The resolvent of $\mathscr{N}_C$ is $P_C$, the metric projection of $\mathscr{H}$ onto $C$, defined by the following formula:
\begin{align*}
P_C(x) := \arg\min_{y\in C} \|x-y\| \; \quad\forall x\in\mathscr{H}.
\end{align*}

Also, it is evident that $\partial i_C(x)=\mathscr{N}_C(x)$, where $x$ is in $\mathscr{H}$, is a nonempty, closed and convex subset of $\mathscr{H}$. Furthermore, $\partial i_C$ is a maximal monotone set-valued mapping from $\mathscr{H}$ to $2^{\mathscr{H}}$ and $P_C  = (I+\lambda \partial i_C)^{-1}$, $\lambda > 0$. More details on this topic can be found in \cite{HHBA2011}.

\subsection{Split convex minimization problem}\label{Sec:5.1}
Let $\mathscr{H}_1$ and $\mathscr{H}_2$ be real Hilbert spaces and $E:\mathscr{H}_1\rightarrow\mathbb{R}\cup \{+\infty\} $ and $G:\mathscr{H}_2\rightarrow\mathbb{R}\cup \{+\infty\}$ be two functions that can be decomposed as sum of two functions, i.e.,
\begin{equation*}
	E(x)\equiv e_1(x) + e_2(x)
\end{equation*}
and
\begin{equation*}
	G(y)\equiv g_1(y) + g_2(y),
\end{equation*}
in which $e_1: \mathscr{H}_1\rightarrow\mathbb{R}\cup \{+\infty\}$ and $g_1: \mathscr{H}_2\rightarrow\mathbb{R}\cup \{+\infty\}$ are proper lower semi-continuous convex functions, and $e_2: \mathscr{H}_1\rightarrow\mathbb{R}$ and $g_2: \mathscr{H}_2\rightarrow\mathbb{R}$ and convex and diffentiable functions. Let $A:\mathscr{H}_1\rightarrow \mathscr{H}_2$ be a bounded linear operator. Then the {\em split convex minimization problem} (SCMP) is defined as follows: find $x^*\in \mathscr{H}_1$ such that
\begin{equation}\label{Split:MIN}
\begin{cases}
x^*=\arg\min_{x\in \mathscr{H}_1} E(x)\text{ and }\\
y^*=Ax^* \text{ such that }y^*= \arg\min_{y\in \mathscr{H}_2} G(y).
\end{cases}
\end{equation}

If we put $B_1=\partial e_1$, $B_2=\partial g_1$, $f_1=\nabla e_2$, and $f_2=\nabla g_2$, then the SVI \eqref{SVI} reduces to the SCMP \eqref{Split:MIN} (see \cite{RT,Tai}).

Therefore, we deduce the following strong convergence theorem for SCMP \eqref{Split:MIN} from Theorem \eqref{Algo1} under some additional assumptions on the parameters and mappings involved.

Assume that the solution set $\Omega$ of the problem \eqref{Split:MIN} is nonempty.

\begin{theorem}
Suppose the above-mentioned data are given. Assume that the gradients $\nabla e_2$ and $\nabla g_2$ are Lipschitz continuous with constants $L_1$ and $L_2$, respectively. For a given $z_1\in \mathscr{H}_1$, compute
\begin{align}\label{Algo:SMP}
z_{n+1}=J^{\partial e_1}_{\lambda_n}(z_n-\lambda_n \nabla e_2(z_n)-\lambda_n A^*(I-J^{\partial g_1}_{\lambda_n}(I-\lambda_n \nabla g_2))A(z_n)-\lambda_n\alpha_n\mathscr{F}(z_n)),
\end{align}
where the parameters satisfy the following conditions:
\begin{itemize}
\item [(C1)] $\alpha_n\in (0, 1)$, $\lim\limits_{n\to \infty}\alpha_n = 0$ and $\sum\limits_{n=1}^{\infty}\alpha_n = \infty$;
\item[(C2)] $\lim\limits_{n\to \infty}\frac{\vert\alpha_{n+1}-\alpha_n\vert}{\alpha_n^2}=0$;
\item[(C3)] $0<c<\lambda_n<\frac{1}{\rho}min \Big\{\tilde{\tau},\frac{1}{\|A\|^2}\Big\}$ for each $n\in\mathbb{N}$, where $\tilde{\tau}=\min\{\frac{1}{L_1},\frac{1}{L_2}\}$ and $\rho>2$.
\end{itemize}
Then the sequence $\{z_n\}$ generated by the iterative algorithm \eqref{Algo:SMP} converges strongly to a solution $u\in\Omega$ of the problem SCMP \eqref{Split:MIN}, which uniquely solves the problem
\begin{align*}
\text{Find } u\in\Omega \text{ such that } \left\langle \mathscr{F}(u), x^*-u\right\rangle\geq 0 \; \quad\forall x^*\in\Omega.
\end{align*}
\end{theorem}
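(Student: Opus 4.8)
The plan is to recognize the theorem as a direct specialization of the strong convergence result established in Section~\ref{Sec:4}, obtained by identifying the data of the SCMP \eqref{Split:MIN} with those of the SVI \eqref{SVI}. Concretely, I would set $\mathscr{B}_1=\partial e_1$, $\mathscr{B}_2=\partial g_1$, $f_1=\nabla e_2$ and $f_2=\nabla g_2$, and then verify that Assumptions $(A1)$--$(A4)$ together with the parameter conditions $(B1)$--$(B3)$ are all satisfied. Once this is done, the algorithm \eqref{Algo:SMP} is literally \eqref{Algo1} under these substitutions, and the conclusion follows verbatim.

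First I would check $(A1)$. Since $e_1$ and $g_1$ are proper, lower semi-continuous and convex, their subdifferentials $\partial e_1$ and $\partial g_1$ are maximal monotone by Rockafellar's theorem, so $\mathscr{B}_1$ and $\mathscr{B}_2$ are maximal monotone. I would also record the equivalence of the two minimization problems with the corresponding inclusions: because $e_2$ and $g_2$ are real-valued convex and differentiable, the subdifferential sum rule gives $\partial E=\partial e_1+\nabla e_2$ and $\partial G=\partial g_1+\nabla g_2$, so by Fermat's rule $x^*$ minimizes $E$ if and only if $0\in\partial e_1(x^*)+\nabla e_2(x^*)$, and analogously for $G$ at $y^*=Ax^*$. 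Hence the solution set of \eqref{Split:MIN} coincides with $\Omega$, and $(A4)$ holds by the standing assumption that this set is nonempty.

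The key analytical step is $(A2)$, and here I would invoke the Baillon-Haddad theorem (Lemma \ref{Lem:LipIsm}). Since $e_2$ is convex and Fr\'{e}chet differentiable with $L_1$-Lipschitz gradient, Lemma \ref{Lem:LipIsm} yields that $\nabla e_2$ is $(1/L_1)$-ism; likewise $\nabla g_2$ is $(1/L_2)$-ism. Thus $f_1$ and $f_2$ are $\tau_1$-ism and $\tau_2$-ism with $\tau_1=1/L_1$ and $\tau_2=1/L_2$, so that $\tilde{\tau}=\min\{\tau_1,\tau_2\}=\min\{1/L_1,1/L_2\}$, which is precisely the constant appearing in $(C3)$. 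This is the one place where mere Lipschitz continuity and monotonicity would not suffice; the co-coercivity is supplied by Baillon-Haddad and is what makes $(A2)$ available. Assumption $(A3)$ is inherited directly, since $\mathscr{F}$ is the same $\gamma$-strongly monotone and $L$-Lipschitz selection mapping used throughout.

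Finally I would match the parameters: conditions $(C1)$, $(C2)$ and $(C3)$ are identical to $(B1)$, $(B2)$ and $(B3)$ with the value $\tilde{\tau}=\min\{1/L_1,1/L_2\}$ computed above. Therefore the hypotheses of the theorem of Section~\ref{Sec:4} hold, and its conclusion gives strong convergence of $\{z_n\}$ to $u\in\Omega$, where $u$ is the unique solution of the bilevel variational inequality \eqref{SVIVI} (uniqueness by Lemma \ref{Lem:VIP}). I expect the only genuine obstacle to be the verification of $(A2)$ via Baillon-Haddad; all remaining steps are routine checks of definitions and a transcription of the parameter conditions.
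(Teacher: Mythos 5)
Your proposal is correct and follows essentially the same route as the paper: the paper deduces this theorem from the strong convergence theorem of Section~4 via exactly the substitutions $\mathscr{B}_1=\partial e_1$, $\mathscr{B}_2=\partial g_1$, $f_1=\nabla e_2$, $f_2=\nabla g_2$, with the Baillon--Haddad lemma (stated in the preliminaries for precisely this purpose) supplying the $(1/L_1)$- and $(1/L_2)$-ism properties that make $(A2)$ hold and that explain the constant $\tilde{\tau}=\min\{1/L_1,1/L_2\}$ in $(C3)$. Your verification of $(A1)$, $(A4)$ and the identification of the SCMP solution set with $\Omega$ via Fermat's rule and the subdifferential sum rule fills in details the paper leaves implicit (citing references instead), but the argument is the same.
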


Similarly, using Remark \ref{remark:Weakalgo}, we deduce the following weak convergence theorem for the SCMP \eqref{Split:MIN} from Corollary \ref{Thm:Weakalgo}.

\begin{theorem}
Let $f:\mathscr{H}_1\rightarrow\mathbb{R}$ and $g:\mathscr{H}_2\rightarrow\mathbb{R}$ be two convex Fréchet differentiable functions. Let $A:\mathscr{H}_1\rightarrow \mathscr{H}_2$ be a bounded linear operator. Assume that the gradients $\nabla f$ and $\nabla g$ are Lipschitz continuous with constants $L_1$ and $L_2$, respectively. 
For a given $z_1\in \mathscr{H}_1$, compute
\begin{equation}\label{Algo:WSMP}
z_{n+1}=P_C(z_n-\lambda_n \nabla f(z_n)-\lambda_n A^*(I-P_Q(I-\lambda_n\nabla g))A(z_n)),
\end{equation}
where the parameters $\lambda_n$ satisfy the following condition:
\begin{itemize}
\item[(D1)] $0<c<\lambda_n<\frac{1}{\rho}min \Big\{\tilde{\tau},\frac{1}{\|A\|^2}\Big\}$ for each $n\in\mathbb{N}$, where $\tilde{\tau}=\min\{\frac{1}{L_1},\frac{1}{L_2}\}$ and $\rho>2$.
\end{itemize}
Then the sequence $\{z_n\}$ generated by the iterative algorithm \eqref{Algo:WSMP} converges weakly to a solution $u\in\Omega$ of the problem SCMP \eqref{Split:MIN}.
\end{theorem}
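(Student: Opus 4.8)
The plan is to derive this theorem as a direct specialization of Corollary \ref{Thm:Weakalgo}, identifying the convex minimization data with the set-valued and single-valued ingredients of the SVI \eqref{SVI}. Concretely, I would take $\mathscr{B}_1 = \partial i_C$ and $\mathscr{B}_2 = \partial i_Q$ to be the normal-cone operators (subdifferentials of the indicator functions) of the underlying constraint sets $C$ and $Q$, and set $f_1 = \nabla f$ and $f_2 = \nabla g$. With these choices the two inclusions in \eqref{SVI} read $0 \in \mathscr{N}_C(x^*) + \nabla f(x^*)$ and $0 \in \mathscr{N}_Q(Ax^*) + \nabla g(Ax^*)$, which are precisely the first-order optimality conditions characterizing $x^* = \arg\min_{x\in C} f(x)$ and $Ax^* = \arg\min_{y\in Q} g(y)$; hence $\Omega$ coincides with the solution set of the SCMP \eqref{Split:MIN}.

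Next I would verify that Assumptions (A1), (A2), (A4) and (B3) all hold for this data. Assumption (A1) is immediate, since $\partial i_C$ and $\partial i_Q$ are maximal monotone, as recorded in Section \ref{Sec:5}. For (A2), the main technical input is the Baillon--Haddad theorem (Lemma \ref{Lem:LipIsm}): because $f$ and $g$ are convex and their gradients are $L_1$- and $L_2$-Lipschitz continuous, $\nabla f$ is $(1/L_1)$-ism and $\nabla g$ is $(1/L_2)$-ism. Thus $f_1$ and $f_2$ are inverse strongly monotone with $\tau_1 = 1/L_1$ and $\tau_2 = 1/L_2$, so that $\tilde{\tau} = \min\{1/L_1, 1/L_2\}$ is exactly the constant appearing in (D1). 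Assumption (A4) is the standing hypothesis $\Omega \neq \emptyset$, and (B3) reduces verbatim to the parameter condition (D1).

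It then remains to recognize the iteration \eqref{Algo:WSMP} as the scheme \eqref{Algo2} under this identification. Since the resolvent of the normal cone is the metric projection, we have $J^{\partial i_C}_{\lambda_n} = P_C$ and $J^{\partial i_Q}_{\lambda_n} = P_Q$, so the inner mapping $T = J^{\mathscr{B}_2}_{\lambda_n}(I - \lambda_n f_2)$ becomes $P_Q(I - \lambda_n \nabla g)$ and the outer resolvent becomes $P_C$; substituting these into \eqref{Algo2} yields \eqref{Algo:WSMP} exactly. With all hypotheses of Corollary \ref{Thm:Weakalgo} in force, the corollary delivers weak convergence of $\{z_n\}$ to a point in $\Omega$, which by the first step is a solution of the SCMP \eqref{Split:MIN}.

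I do not anticipate a genuine obstacle here, since the argument is essentially a translation between two equivalent formulations. The one step requiring care is the appeal to Baillon--Haddad, which is precisely what converts the Lipschitz hypothesis on the gradients into the inverse strong monotonicity needed to satisfy (A2) and to make the step-size window in (D1) the correct one; every other ingredient is bookkeeping that matches the projection-based scheme against the resolvent-based Corollary \ref{Thm:Weakalgo}.
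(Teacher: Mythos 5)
Your proposal is correct and follows the same route as the paper, which deduces this theorem from Corollary \ref{Thm:Weakalgo} via Remark \ref{remark:Weakalgo}: identify $\mathscr{B}_1=\partial i_C$, $\mathscr{B}_2=\partial i_Q$ (whose resolvents are $P_C$, $P_Q$), take $f_1=\nabla f$, $f_2=\nabla g$, and use the Baillon--Haddad theorem (Lemma \ref{Lem:LipIsm}) to obtain the inverse strong monotonicity required by (A2), so that (D1) becomes (B3). Your write-up in fact spells out the bookkeeping (optimality conditions, maximal monotonicity of the normal cones, $\Omega\neq\emptyset$) that the paper leaves implicit.
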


\subsection{Split variational inequality problem}

Let $C$ and $Q$ be two nonempty, closed and convex subsets of real Hilbert spaces $\mathscr{H}_1$ and $\mathscr{H}_2$, respectively.  Let $f_1:\mathscr{H}_1\rightarrow\mathscr{H}_1$ and  $f_2:\mathscr{H}_2\rightarrow\mathscr{H}_2$ be two single-valued mappings. Suppose $A:\mathscr{H}_1\rightarrow\mathscr{H}_2$ is a bounded linear operator. Then the {\em split variational inequality problem} (SVIP) \cite{YCEN22012} is defined as follows: find $x^*\in C$ such that
\begin{equation}\label{Split:VIP}
\begin{cases}
\left\langle f_1(x^*), x-x^*\right\rangle\geq0 \quad\forall x\in C\text{ and }\\
y^*=Ax^*\in Q \text{ solves }\left\langle f_2(y^*), y-y^*\right\rangle\geq 0 \quad\forall y\in Q.
\end{cases}
\end{equation}

If we take $B_1=\partial i_C$ and $B_2=\partial i_Q$, then the SVI (\ref{SVI}) reduces to the SVIP \eqref{Split:VIP}.
%That is, the problem \eqref{SVI} reduces to the following problem: Find $x^*\in C$ such that
%\begin{equation*}
%\begin{cases}
%\left\langle f_1(x^*), x-x^*\right\rangle\geq0, \quad\forall x\in C\\
%\text{and }y^*=Ax^*\in Q \text{ solves }\left\langle f_2(y^*), y-y^*\right\rangle\geq 0, \quad\forall y\in Q.
%\end{cases}
%\end{equation*}

Therefore, we can deduce the following strong convergence theorem for the SCMP \eqref{Split:MIN} from Theorem \eqref{Algo1} under some additional assumptions on the parameters and mappings involved.

We assume that the solution set $\Omega$ of the SVIP \eqref{Split:VIP} is nonempty.

\begin{theorem}
Let $f_1:\mathscr{H}_1\rightarrow\mathscr{H}_1$ and $f_2:\mathscr{H}_2\rightarrow\mathscr{H}_2$ be two single-valued mappings and $A:\mathscr{H}_1\rightarrow \mathscr{H}_2$ be a bounded linear operator. Suppose $(A2)$ and $(A3)$ hold. Let $z_1\in \mathscr{H}_1$ be arbitrary. Compute
\begin{align}\label{Algo:SVIP}
z_{n+1}=P_C(z_n-\lambda_n f_1(z_n)-\lambda_n A^*(I-P_Q(I-\lambda_n f_2))A(z_n)-\lambda_n\alpha_n\mathscr{F}(z_n)),
\end{align}
where the parameters satisfy conditions $(C1)$, $(C2)$, and $(B3)$.

Then the sequence $\{z_n\}$ generated by the iterative algorithm \eqref{Algo:SVIP} converges strongly to a solution $u\in\Omega$ of the SVIP \eqref{Split:VIP}, which uniquely solves the problem
\begin{align*}
\text{Find } u\in\Omega \text{ such that } \left\langle \mathscr{F}(u), x^*-u\right\rangle\geq 0 \; \quad\forall x^*\in\Omega.
\end{align*}
\end{theorem}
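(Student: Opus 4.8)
The plan is to obtain this theorem as a direct specialization of the main strong convergence theorem attached to algorithm \eqref{Algo1}, by choosing the set-valued mappings to be subdifferentials of indicator functions. Concretely, I would set $\mathscr{B}_1=\partial i_C$ and $\mathscr{B}_2=\partial i_Q$. As recorded in the preamble to Section \ref{Sec:5}, $\partial i_C=\mathscr{N}_C$ and $\partial i_Q=\mathscr{N}_Q$ are maximal monotone set-valued mappings whenever $C$ and $Q$ are nonempty, closed and convex, so Assumption $(A1)$ holds automatically for this choice. Assumptions $(A2)$ and $(A3)$ are hypotheses of the present theorem, and Assumption $(A4)$ is the standing hypothesis that $\Omega$ is nonempty.

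Next I would verify that the scheme \eqref{Algo:SVIP} is literally \eqref{Algo1} under this substitution. This rests on the identity $J_\lambda^{\partial i_C}=(I+\lambda\partial i_C)^{-1}=P_C$, and likewise $J_\lambda^{\partial i_Q}=P_Q$, both recalled in Section \ref{Sec:5}. Replacing the resolvents $J_{\lambda_n}^{\mathscr{B}_1}$ and $J_{\lambda_n}^{\mathscr{B}_2}$ in \eqref{Algo1} by $P_C$ and $P_Q$ produces exactly \eqref{Algo:SVIP}. Moreover the parameter conditions $(C1)$, $(C2)$, $(B3)$ are identical in form to $(B1)$, $(B2)$, $(B3)$ (with $\tilde\tau=\min\{\tau_1,\tau_2\}$ furnished by $(A2)$), so all the quantitative hypotheses of the main theorem are in force.

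The one genuinely substantive step is to confirm that, with these choices, the solution set of the SVI \eqref{SVI} coincides with that of the SVIP \eqref{Split:VIP}. For this I would invoke the normal-cone characterization: the inclusion $0\in\partial i_C(x^*)+f_1(x^*)$ is equivalent to $-f_1(x^*)\in\mathscr{N}_C(x^*)$, which by the very definition of $\mathscr{N}_C$ means $\langle f_1(x^*), y-x^*\rangle\geq 0$ for all $y\in C$; applying the same reasoning in $\mathscr{H}_2$ identifies the second inclusion $0\in\partial i_Q(Ax^*)+f_2(Ax^*)$ with the variational inequality governed by $f_2$ over $Q$. Hence $x^*$ solves \eqref{SVI} in this setting if and only if it solves \eqref{Split:VIP}, so the two solution sets agree.

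With the problem, the algorithm, and all the hypotheses thus matched, I would simply apply the main theorem to conclude that $\{z_n\}$ converges strongly to the unique $u\in\Omega$ solving the bilevel variational inequality \eqref{SVIVI}, which is precisely the asserted conclusion. I do not expect any analytic obstacle: the entire content is the translation between the inclusion formulation and the projection/variational-inequality formulation via $J_\lambda^{\partial i_C}=P_C$ and $\partial i_C=\mathscr{N}_C$, and once this dictionary is in place the result is immediate from the already-established strong convergence theorem.
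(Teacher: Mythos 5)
Your proposal is correct and matches the paper's own route exactly: the paper obtains this theorem by specializing the main strong convergence theorem for algorithm \eqref{Algo1} with $\mathscr{B}_1=\partial i_C$ and $\mathscr{B}_2=\partial i_Q$, using $J_\lambda^{\partial i_C}=P_C$, $J_\lambda^{\partial i_Q}=P_Q$, the maximal monotonicity of $\partial i_C$, and the normal-cone identification of the inclusions with the variational inequalities. Your write-up in fact spells out the reduction (in particular the equivalence of the solution sets of \eqref{SVI} and \eqref{Split:VIP}) in more detail than the paper, which states it without proof.
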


Similarly, using Remark \ref{remark:Weakalgo} we deduce the following weak convergence theorem for the SVIP \eqref{Split:VIP} from Corollary \ref{Thm:Weakalgo}.

\begin{theorem}
Let $f_1:\mathscr{H}_1\rightarrow\mathscr{H}_1$ and $f_2:\mathscr{H}_2\rightarrow\mathscr{H}_2$ be two single-valued mappings and let $A:\mathscr{H}_1\rightarrow \mathscr{H}_2$ be a bounded linear operator. Suppose $(A2)$ and $(A3)$ hold. Let $z_1\in \mathscr{H}_1$ be arbitrary. Compute
\begin{align}\label{Algo:wsvip}
z_{n+1}=P_C(z_n-\lambda_n f_1(z_n)-\lambda_n A^*(I-P_Q(I-\lambda_n f_2))A(z_n)),
\end{align}
where the parameters $\lambda_n $ satisfy $(B3)$.

Then the sequence $\{z_n\}$ generated by the iterative algorithm \eqref{Algo:wsvip} converges strongly to a solution $u\in\Omega$ of the SVIP \eqref{Split:VIP}.
\end{theorem}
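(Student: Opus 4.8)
The plan is to obtain this theorem as the split variational inequality specialization of the scheme \eqref{Algo2} analyzed in Corollary \ref{Thm:Weakalgo}. Setting $\mathscr{B}_1=\partial i_C$ and $\mathscr{B}_2=\partial i_Q$, Assumption $(A1)$ holds automatically, since the subdifferential of the indicator of a nonempty, closed and convex set is maximal monotone, and the corresponding resolvents collapse to the metric projections $J^{\partial i_C}_{\lambda_n}=P_C$ and $J^{\partial i_Q}_{\lambda_n}=P_Q$. Under this identification, the solution set of \eqref{Split:VIP} coincides with $\Omega$ (so the standing nonemptiness assumption supplies $(A4)$), while $(A2)$ guarantees that $I-\lambda_n f_2$ is averaged and that $T=A^*(I-P_Q(I-\lambda_n f_2))A$ is ism for every $\lambda_n$ obeying $(B3)$, exactly as in the discussion opening Section \ref{Sec:3}. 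Thus \eqref{Algo:wsvip} is literally the iteration \eqref{Algo2} for this choice of resolvents, and the estimates proved for \eqref{Algo2} are available.

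First I would fix $u\in\Omega$ and reproduce, from the $\alpha_n=0$ reduction in the proof of Corollary \ref{Thm:Weakalgo}, the descent estimate $\|z_{n+1}-u\|^2\le\|z_n-u\|^2-\frac{1}{2}(1-\frac{2}{\rho})\|z_n-z_{n+1}\|^2$. Since $\rho>2$ by $(B3)$, this makes $\{\|z_n-u\|\}$ nonincreasing, so $\{z_n\}$ is bounded and $\|z_{n+1}-z_n\|\to 0$. Extracting a subsequence $z_{n_k}\rightharpoonup\bar x$ and passing to the limit in the monotone inequality \eqref{66} as in the corollary, the maximal monotonicity of $\mathscr{B}_1+f_1+A^*(I-T)A$ forces $0\in(\mathscr{B}_1+f_1+A^*(I-T)A)\bar x$, that is, $\bar x\in\Omega$; Lemma \ref{Xu:lem} then upgrades this to weak convergence of the whole sequence to a point of $\Omega$.

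The decisive step, and the main obstacle, is to strengthen this to the asserted norm convergence. Unlike \eqref{Algo1}, the iteration \eqref{Algo:wsvip} carries no regularizing term, so the strong monotonicity that produced the contractive factor $(1-\gamma\lambda_n\alpha_n)$ in the proof of the main theorem is absent; the descent estimate above is merely Fej\'er monotone and not contractive, and in a general Hilbert space Fej\'er monotonicity together with weak cluster points in $\Omega$ yields only weak convergence. To force strong convergence one must inject a genuine contraction, and the natural route is to sharpen the inverse strong monotonicity of $f_1$ in $(A2)$ to $\gamma$-strong monotonicity: this adds a term $-2\gamma\lambda_n\|z_n-u\|^2$ to the right-hand side, renders $u$ the unique solution of \eqref{Split:VIP}, and reduces the recursion to $\Gamma_{n+1}\le(1-a_n)\Gamma_n+a_nb_n$ with $\Gamma_n=\|z_n-u\|^2$, $\sum a_n=\infty$ and $b_n\to 0$, so that Lemma \ref{Lem:Lastconvergence} gives $\Gamma_n\to 0$. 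Identifying and justifying this extra monotonicity (or an equivalent compactness hypothesis), which the statement needs beyond the bare ism assumption, is precisely the crux of the proof.
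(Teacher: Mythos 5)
Your reduction is exactly the paper's intended proof: the theorem is stated as a direct specialization of Corollary \ref{Thm:Weakalgo}, obtained by taking $\mathscr{B}_1=\partial i_C$ and $\mathscr{B}_2=\partial i_Q$, so that the resolvents collapse to $P_C$ and $P_Q$ and \eqref{Algo:wsvip} becomes \eqref{Algo2}; the paper offers no separate argument beyond this citation. You have also correctly diagnosed the one genuine issue: the word ``strongly'' in the statement cannot follow from (A2), (A3) and (B3). This is a misprint rather than a missing argument on your part --- the sentence immediately preceding the theorem announces ``the following weak convergence theorem for the SVIP \eqref{Split:VIP} from Corollary \ref{Thm:Weakalgo}'', and the parallel specialization for the SCMP in Subsection \ref{Sec:5.1} correctly concludes weak convergence. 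So the intended conclusion is weak convergence, which your first two paragraphs establish exactly along the paper's lines: Fej\'er monotonicity from the $\alpha_n=0$ descent estimate, $\|z_{n+1}-z_n\|\to 0$ since $\rho>2$, weak cluster points in $\Omega$ via the maximal monotonicity of $\partial i_C+f_1+A^*(I-T)A$, and Lemma \ref{Xu:lem} to upgrade to weak convergence of the whole sequence.

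Your proposed repair, if one insists on the strong conclusion, is sound: strengthening $f_1$ to $\gamma$-strongly monotone yields a recursion of the form $\|z_{n+1}-u\|^2\leq(1-2\gamma\lambda_n)\|z_n-u\|^2+\cdots$ with $\lambda_n\geq c>0$, so Lemma \ref{Lem:Lastconvergence} (or direct iteration) gives norm, indeed linear, convergence. Two caveats worth recording: under that strengthening the first variational inequality has a unique solution, so $\Omega$ is a singleton and the theorem changes character; and hypothesis (A3) on $\mathscr{F}$ is vacuous in this theorem since $\mathscr{F}$ does not appear in \eqref{Algo:wsvip} --- further evidence that the statement was transcribed from the regularized theorem. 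One subtlety you inherit from Corollary \ref{Thm:Weakalgo} itself (so it is not a gap of yours): the operator $T=P_Q(I-\lambda_n f_2)$ depends on $\lambda_n$, which varies with $n$, whereas the limiting argument in \eqref{66}--\eqref{67} treats $T$ as fixed; making this fully rigorous requires either constant $\lambda_n$ or passing to a subsequence with $\lambda_{n_k}\to\lambda\in(0,2\tilde{\tau})$ and invoking the $\lambda$-independence of the solution set established in \eqref{GSVI}.
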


%==============================================================================================================
\section{Numerical experiments}
\label{Sec:6}

\noindent
We give two examples to illustrate the performance of our algorithm \eqref{Algo1} and also compare it with \eqref{Algo2} and \eqref{algo:moudafi}. Our computations are performed using Matlab R2022a installed on a Desktop computer with Windows 8 and Intel(R) Core(TM) i5-3470 CPU @3.20GHZ and 8GB RAM.
\begin{example}
We consider $\mathscr{H}_1=\mathscr{H}_2=(\ell_2(\mathbb{R}), \|\cdot\|)$, where $\ell_2(\mathbb{R}) := \{x = (x_1, x_2, \dots, x_i, \dots), x_i\in \mathbb{R}: \sum\limits_{i=1}^{\infty}\vert x_i\vert^2<\infty\}$ with the associated norm $\|x\| = (\sum\limits_{i=1}^{\infty}\vert x_i\vert^2)^{\frac{1}{2}}$, $\forall x \in \ell_2(\mathbb{R})$. For each $x\in \ell_2(\mathbb{R})$, we define $\mathscr{B}_1x = 3x$, $\mathscr{B}_2x = 7x$, $f_1x=2x$, $f_2x = (x_1, \frac{x_2}{2}, \frac{x_3}{3}, \cdots)$, $\mathscr{F}x = 4x$, and $Ax = (x_1, x_1, \frac{x_2}{2},\frac{x_3}{3}, \cdots)$. We further choose $\alpha_n= \frac{3}{\sqrt{n}+3}$, $\lambda_n = \frac{n}{7n+3}$ for algorithms \eqref{Algo1} and \eqref{Algo2}, and $\lambda=0.1$ for \eqref{algo:moudafi}. For this experiment, we consider the following choices of the initial value $z_1$:\\
	\noindent {\it Case Ia:} $z_1 = (16, 4, 1, \frac{1}{4},\cdots)$;\\
	\noindent {\it Case Ib:}  $z_1 = (9, 3, 1, \frac{1}{3}, \cdots)$;\\
	\noindent {\it Case Ic:}  $z_1=(100, -10, 1,-0.1,\cdots);$\\
	\noindent {\it Case Id:} $z_1 = (-20, 4, -\frac{4}{5},\frac{4}{25},\cdots)$.
	
	\noindent
	The stopping criterion for the computations is $\text{Tol}_n \leq 10^{-6}$, where $\text{Tol}_n = \|z_n - J_{\lambda_n}^{\mathscr{B}_1}(z_n - f_1(z_n))\| + \|Az_n -J_{\lambda_n}^{\mathscr{B}_2}(Az_n - f_2(Az_n))\|$. Note that $\text{Tol}_n = 0$ means that $z_n$ solves the SVI (\ref{SVI}) . Figure \ref{fig1} and Table \ref{tab1} give the numerical results we obtained for the four different choices of the initial values.
	\begin{figure}	
		\begin{center}
			\includegraphics[height = 4cm]{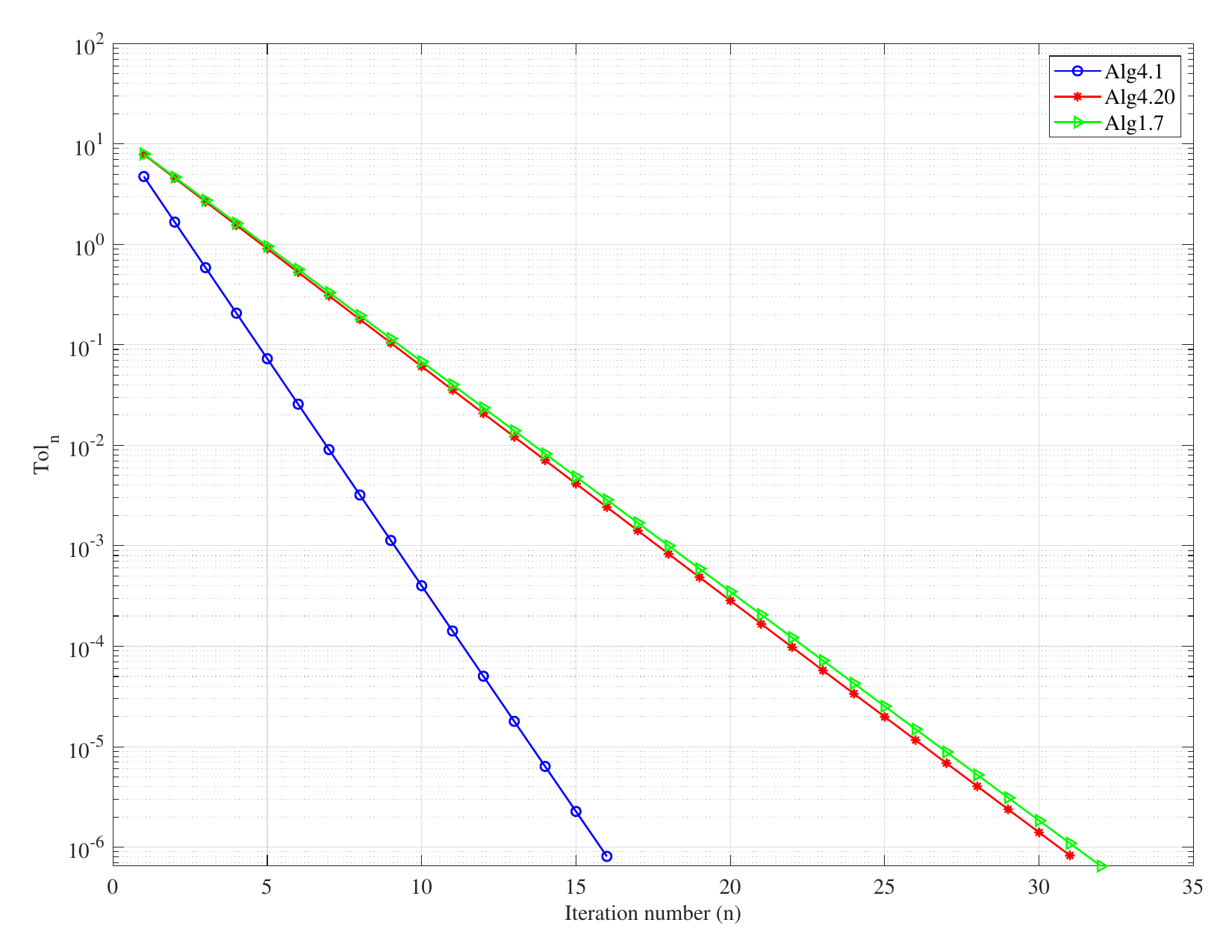}	
			\includegraphics[height = 4cm]{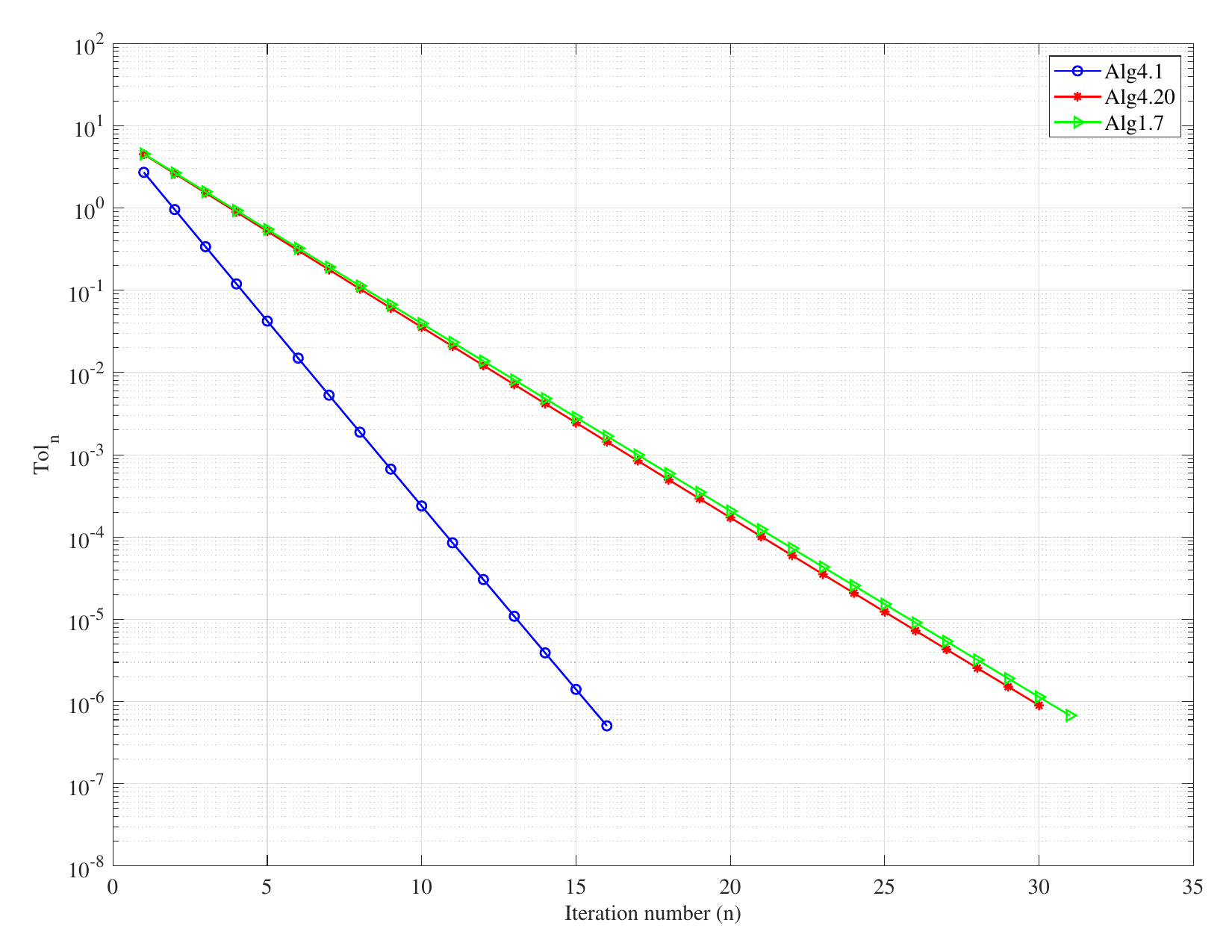}\\
			\includegraphics[height = 4cm]{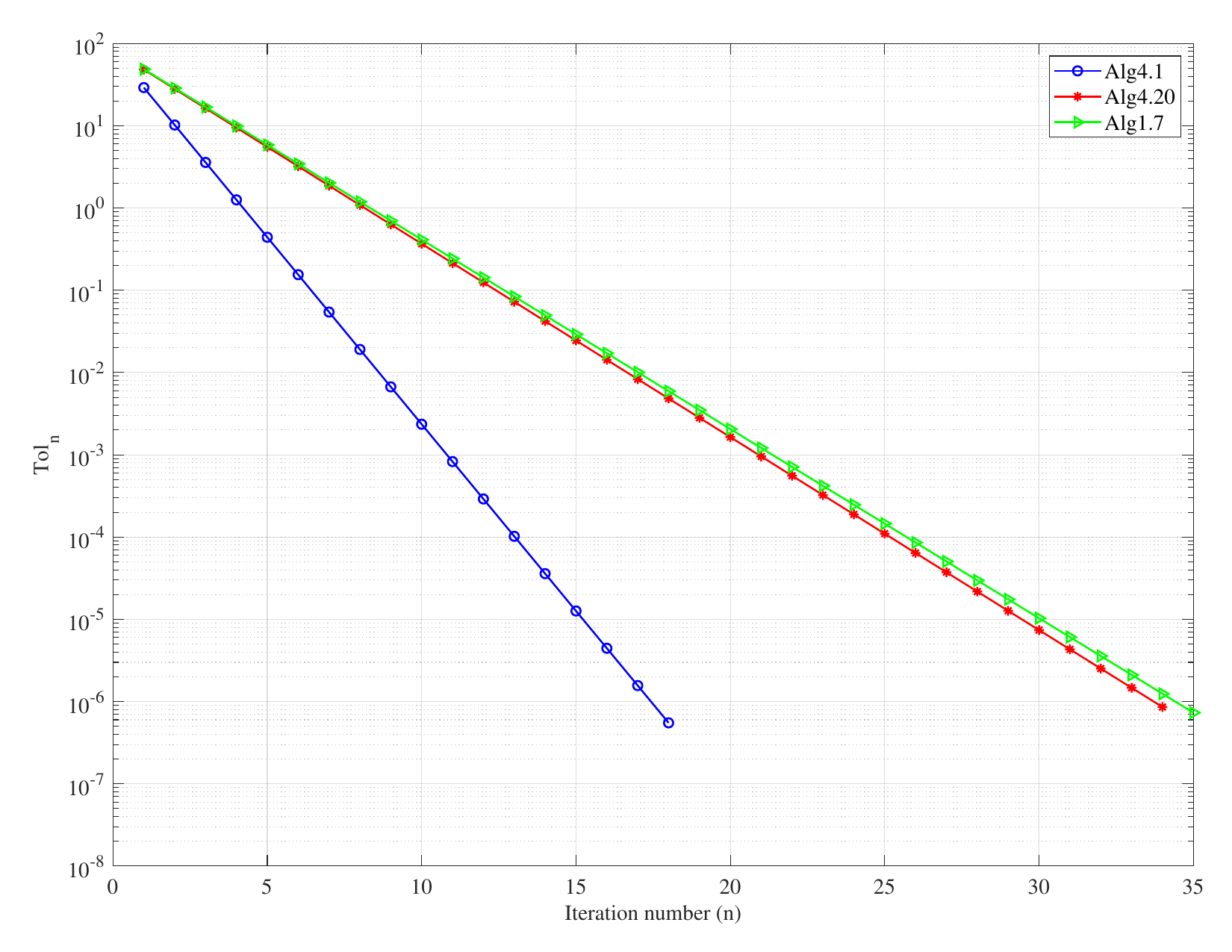}	
			\includegraphics[height = 4cm]{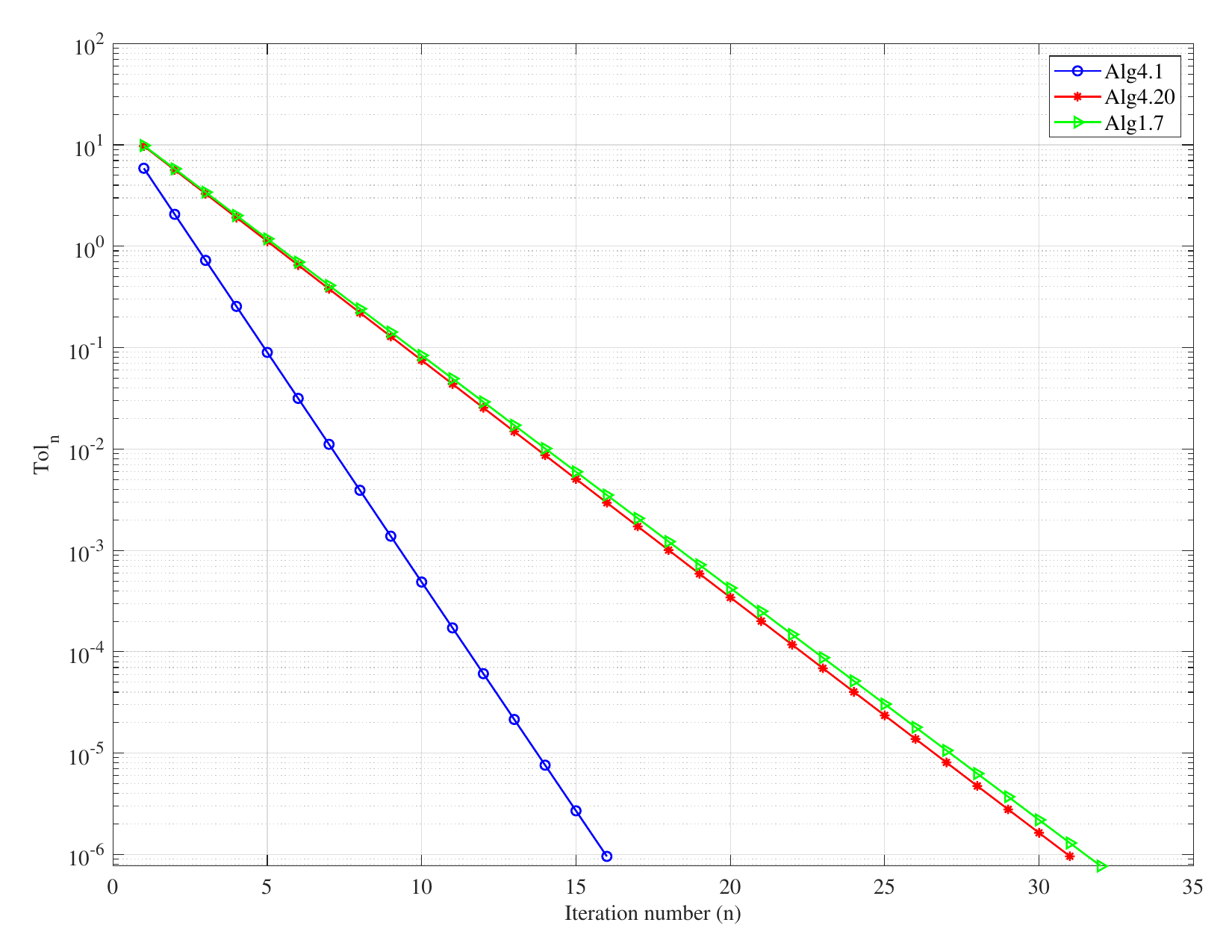}
		\end{center}
		\caption{ Top left: Case Ia; Top right: Case Ib; Bottom left: Case Ic; Bottom right: Case Id.}\label{fig1}
	\end{figure}	
	\begin{table}[h!]
		\caption{Numerical results.}
		\label{tab1}
		\begin{tabular}{ |p{1.4cm} |p{2.6cm}| p{1.8cm} | p{1.8cm}| p{1.8cm}| p{1.4cm}|p{1.4cm}|}
			\hline
			\noindent & \noindent & \eqref{Algo1} & \eqref{Algo2} &  \eqref{algo:moudafi} \\
			\hline
			Case Ia  &   CPU time (sec)  & 0.0036 & 0.0075 & 0.0013 \\
			& No of Iter.  & 16 & 31 & 32 \\
			\hline
			Case Ib &   CPU time (sec)  & 0.0035 & 0.0093 & 0.0016\\
			& No. of Iter. & 16 & 30 & 31\\
			\hline
			Case Ic & CPU time (sec)& 0.0040 &  0.0102 & 0.0013 \\
			& No of Iter. & 18 & 34 & 35 \\
			\hline
			Case Id & CPU time (sec)  & 0.0034 & 0.0106 & 0.0163 \\
			& No of Iter. & 16 & 31 & 32 \\
			\hline
		\end{tabular}
	\end{table}
\end{example}
%\begin{example}Let $\mathscr{H}_1 = \mathscr{H}_2 = L^2([0,1])$ endowed with the inner product $$\langle x, y \rangle:= \int_{0}^{1}x(t)y(t)dt \; \forall x, y \in L^2([0,1])$$ and the induced norm $$\|x\|:=\bigg(\int_{0}^{1}|x(t)|^2dt\bigg)^{\frac{1}{2}} \; \forall x\in L^2([0,1]), t\in [0,1].$$ Let $C\subset \mathscr{H}_1$ and $Q\subset \mathscr{H}_2$ be defined by $C:=\{x\in L^2([0,1]: \langle t^2, x\rangle = \frac{1}{2})\}$ and $Q:=\{x\in L^2([0,1]):\langle \frac{t}{2}, x\rangle = \frac{1}{6}\}$, respectively. Furthermore, we define $A:\mathscr{H}_1\to \mathscr{H}_2$ by $A(x)(t) = \frac{x(t)}{2}$, $f_1(x)(t) = 3x(t)$ and $f_2(x)(t) = 7x(t) - 2$. We intend to use these data to solve the SVIP \eqref{Split:VIP}. Note that in this case $x(t) = 2t\in \Omega$. We define $\mathscr{F}(x)(t):\mathscr{H}_1\to \mathscr{H}_1$ by $\mathscr{F}(x)(t) = 2x(t)$.  We further choose $\alpha_n= \frac{1}{\sqrt{n}+3}$, $\lambda_n = \frac{1}{7n+3}$ for algorithms \eqref{Algo1} and \eqref{Algo2}, and $\lambda=0.1$ for \eqref{algo:moudafi}. For this experiment, we consider the following choices of the initial value $x_1$:\\
%	\noindent {\it Case IIa:} $x_1 = t^2+1$;\\
%	\noindent {\it Case IIb:}  $x_1 = \sin 2t$;\\
%	\noindent {\it Case IIc:}  $x_1=t^2-2t+3;$\\
%	\noindent {\it Case IId:} $x_1 = 2\cos 2t$.
	
%\end{example}

\begin{example}In this experiment, we intend to illustrate the application in Section \ref{Sec:5.1} numerically. Let $E:\mathscr{H}_1\to \mathbb{R}$ and $G:\mathscr{H}_2\to \mathbb{R}$ be defined by $E(x) = \|x\|^2 + (1,1,-3)\cdot x+ 2+\|x\|_1$ and $G(x) = \|x\|^2 + (1,1,-5)\cdot x-3 + \|x\|_1$, respectively. Furthermore, we define $A:\mathscr{H}_1\to \mathscr{H}_2$ by $Ax = 2x$. Note that in this case $x^* = (0,0,1)\in \Omega\neq \emptyset$. We define $\mathscr{F}:\mathscr{H}_1\to \mathscr{H}_1$ by $\mathscr{F}x = 2x$.  We further choose $\alpha_n= \frac{0.01}{\sqrt{500n+2}+2}$, $\lambda_n = \frac{n}{14n+1}$ for algorithms \eqref{Algo1} and \eqref{Algo2}, and $\lambda=1/15$ for \eqref{algo:moudafi}. For this experiment, we consider the following choices of the initial value $z_1$:\\
	\noindent {\it Case IIa:} $z_1 = (1, -2, 16)$;\\
	\noindent {\it Case IIb:}  $z_1 = (15, 9, 0)$;\\
	\noindent {\it Case IIc:}  $z_1=(1, 0, 6);$\\
	\noindent {\it Case IId:} $z_1 = (11, 1, -3)$.
	
	\noindent
	The stopping criterion for the computations is $\text{Tol}_n \leq 10^{-4}$, where $\text{Tol}_n = \|z_n - (0,0,1)\| + \|Az_n - (0,0,2)\|$. Note that $\text{Tol}_n = 0$ means that $z_n$ solves the SCMP (\ref{Split:MIN}). Figure \ref{fig2} and Table \ref{tab2} give the numerical results we obtained for the four different choices of the initial values.
\begin{figure}	
	\begin{center}
		\includegraphics[height = 4cm]{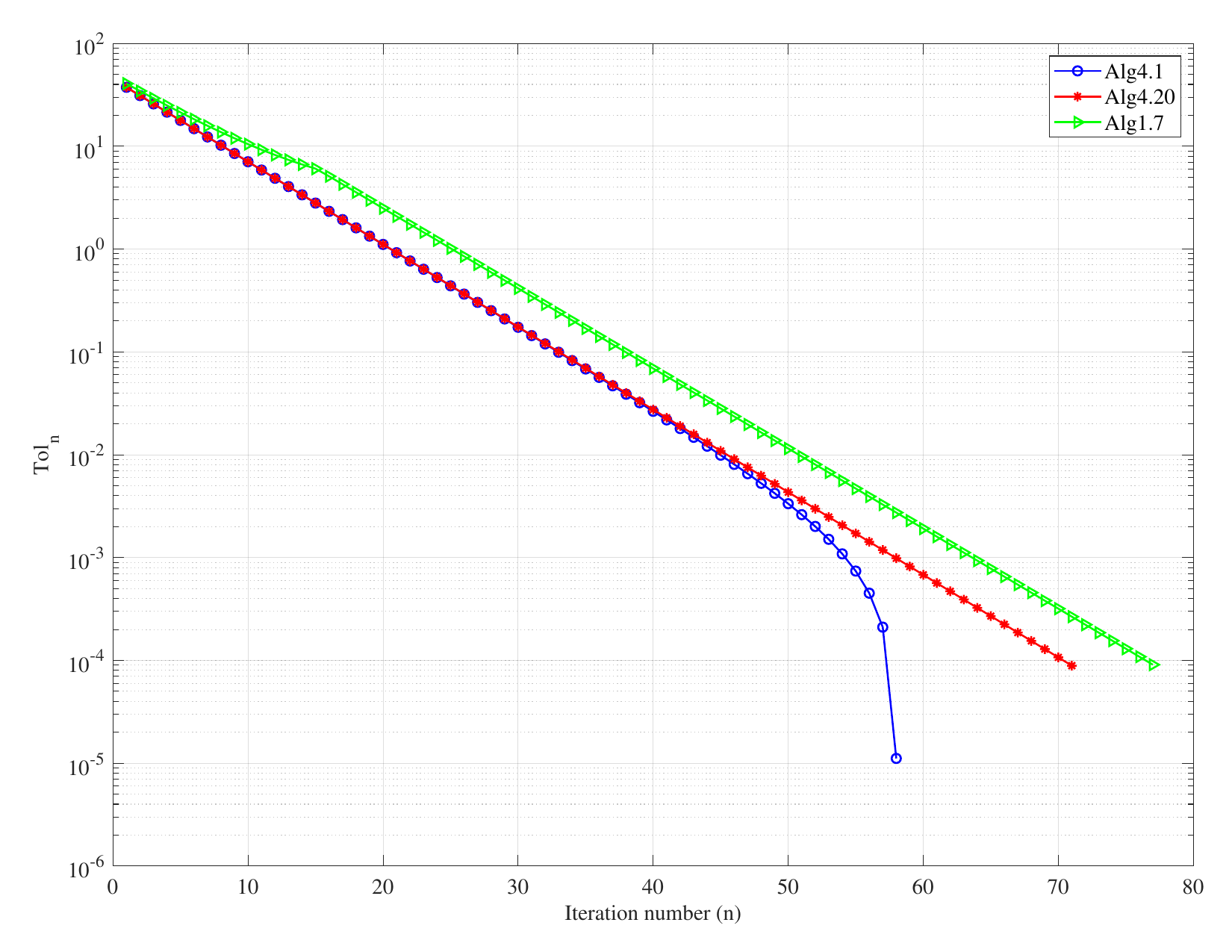}	
		\includegraphics[height = 4cm]{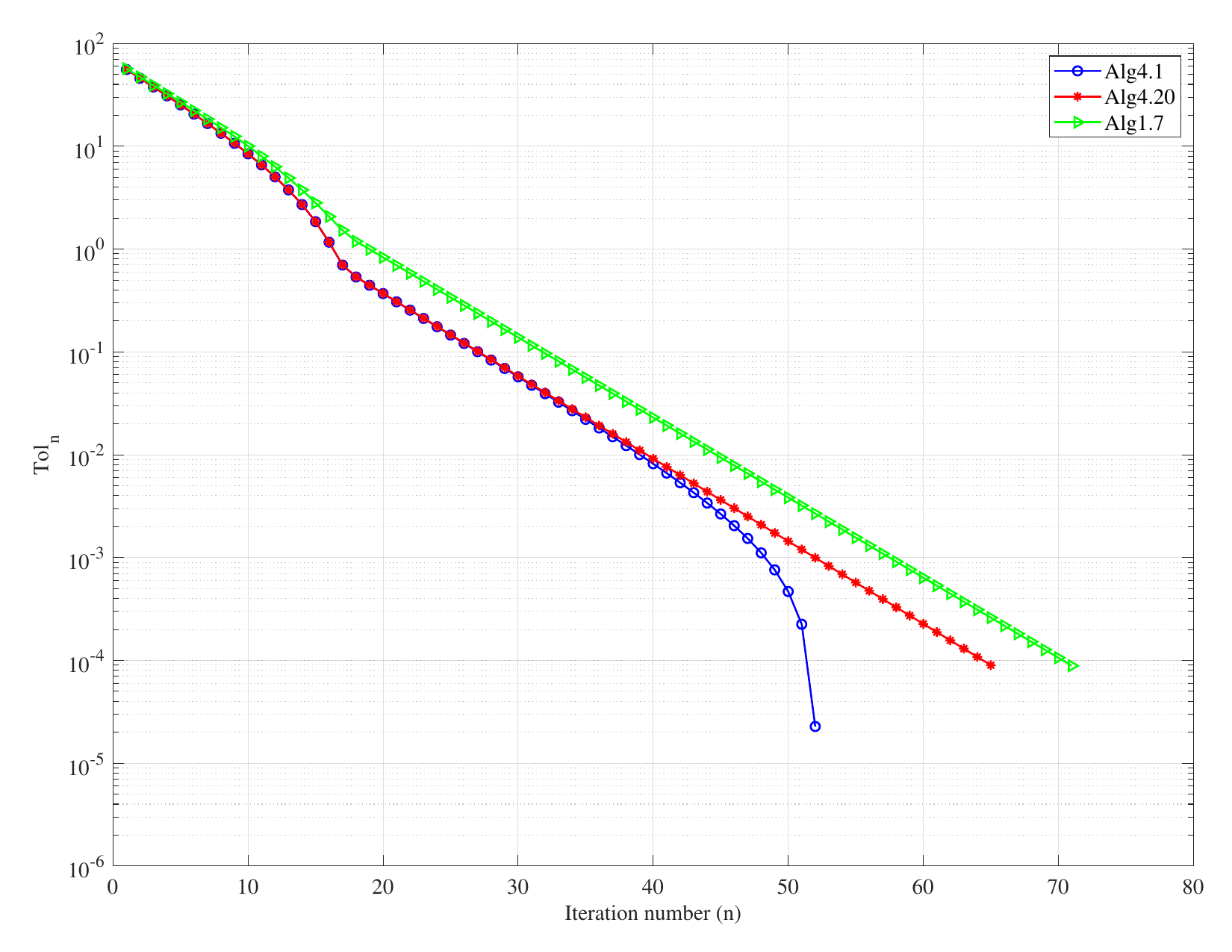}\\
		\includegraphics[height = 4cm]{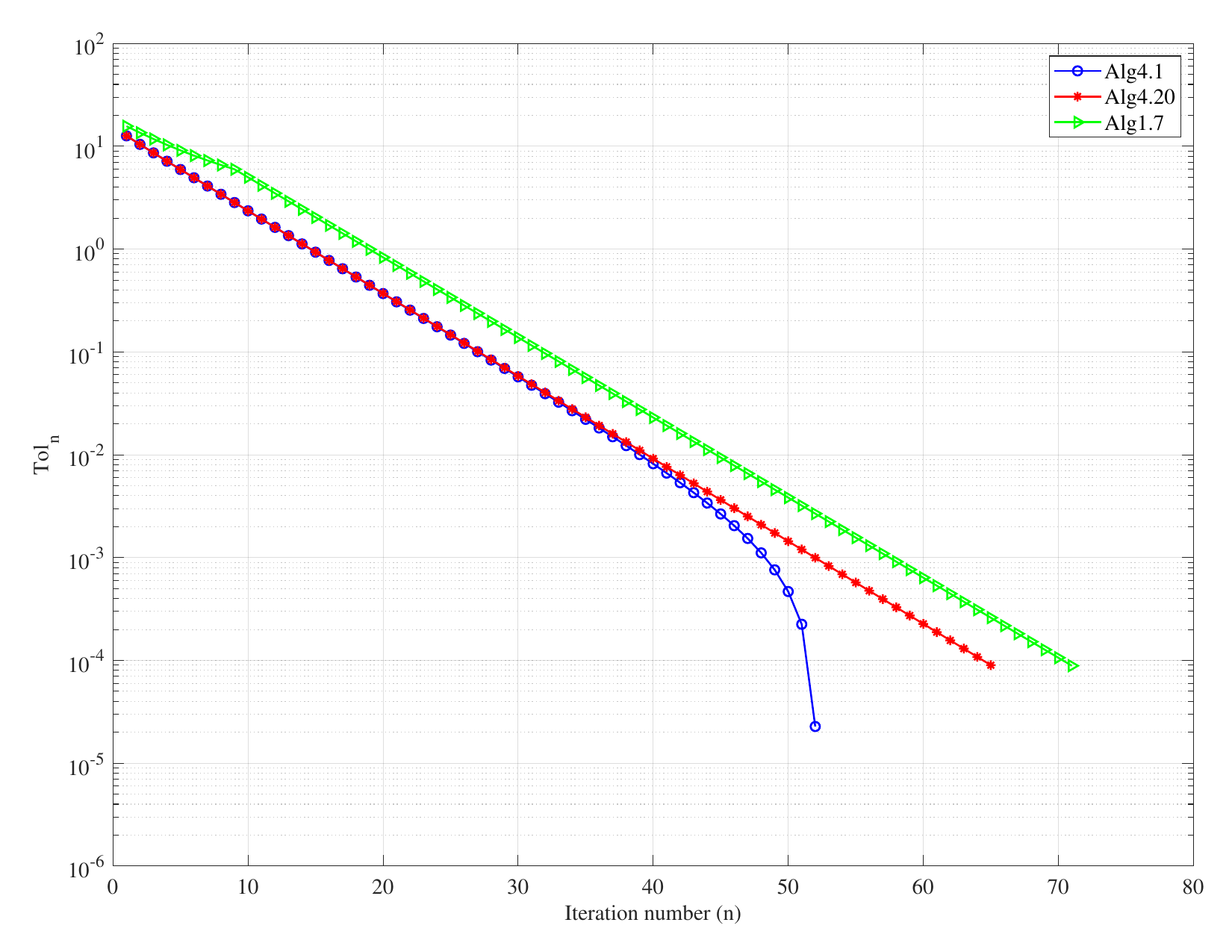}	
		\includegraphics[height = 4cm]{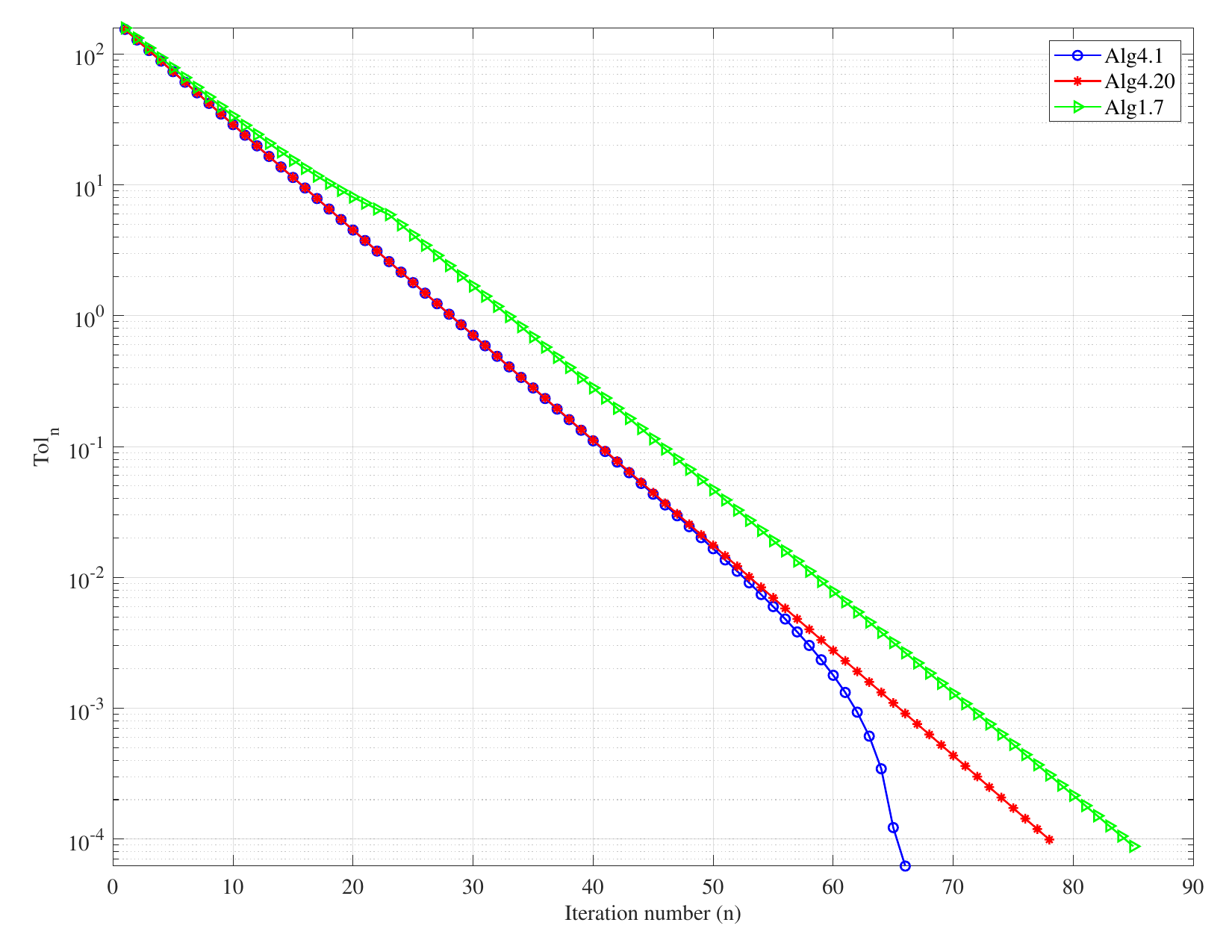}
	\end{center}
	\caption{Top left: Case IIa; Top right: Case IIb; Bottom left: Case IIc; Bottom right: Case IId.}\label{fig2}
\end{figure}	
\begin{table}[h!]
\caption{Numerical results.}
\label{tab2}
\begin{tabular}{ |p{1.4cm} |p{2.6cm}| p{1.8cm} | p{1.8cm}| p{1.8cm}| p{1.4cm}|p{1.4cm}|}
	\hline
	\noindent & \noindent & \eqref{Algo1} & \eqref{Algo2} &  \eqref{algo:moudafi} \\
	\hline
	Case IIa  &   CPU time (sec)  & 8.7847e-04 & 8.8167e-04 & 9.6891e-04  \\
	& No of Iter.  & 58 & 71 & 77 \\
	\hline
	Case IIb &   CPU time (sec)  & 0.0011 & 0.0013 & 0.0017\\
	& No. of Iter. & 52 & 65 & 71\\
	\hline
	Case IIc & CPU time (sec)& 0.0012 &  0.0015 & 0.0075 \\
	& No of Iter. & 52 & 65 & 71 \\
	\hline
	Case IId & CPU time (sec)  & 9.8431e-4 & 9.5435e-04& 0.0010 \\
	& No of Iter. & 66 & 78 & 85 \\
	\hline
\end{tabular}
\end{table}
	
\end{example}

%
%%=======================================================================================================

%\section*{Acknowledgments}
%Soumitra Dey and Adeolu Taiwo gratefully acknowledges the financial support of the Post-Doctoral Program at the Technion - Israel Institute of Technology. Simeon Reich was partially supported by the Israel Science Foundation (Grant 820/17), by the Fund for the Promotion of Research at the Technion (Grant 2001893) and by the Technion General Research Fund (Grant 2016723).

\section*{Disclosure statement}
No potential conflict of interest was reported by the authors.

\section*{Data availability statement}
The authors acknowledge that the data presented in this study must be deposited and made
publicly available in an acceptable repository, prior to publication.

%===========================================

\end{document}